\newtheorem{theorem}{Theorem}[section]
\newtheorem{lemma}[theorem]{Lemma}
\newtheorem{proposition}[theorem]{Proposition}
\newtheorem{corollary}[theorem]{Corollary}
\newtheorem{definition}[theorem]{Definition}
\newtheorem{remark}[theorem]{Remark}
\newtheorem{example}[theorem]{Example}
\newcommand{\N}{\mathbb{N}}
\newcommand{\R}{\mathbb{R}}
\newcommand{\btimes}{\mathlarger{\mathlarger{\mathlarger{\times}}}}
\newcommand{\bchi}{\mathlarger{\mathlarger{\chi}}}
\renewenvironment{proof}[1][.]{%
\bigskip\noindent{\bf Proof#1 }}{%
\hfill$\blacksquare$\bigskip}
\newcommand{\bR}{\mathbb R}
\begin{document}
\pagestyle{myheadings}
\title{Fuzzy Attractors Appearing from GIFZS}

\author[1]{Elismar R. Oliveira
\thanks{
Instituto de Matem\'atica e Estat\'istica - UFRGS\\
Av. Bento Gon\c calves, 9500\\
Porto Alegre - 91500 - RS -Brasil\\
Email: elismar.oliveira@ufrgs.br}
}
\affil[1]{Universidade Federal do Rio Grande do Sul\\
}
\author[2]{Filip Strobin
\thanks{
Instititute of Mathematics, {\L}\'od\'z University of Technology, W\'olcza\'nska 215, 90-924 {\L}\'od\'z}
}
\affil[2]{{\L}\'od\'z University of Technology\\
}
\date{\today}
\maketitle
\boldmath{ 

\noindent\rule{\textwidth}{0.4mm}
\begin{abstract}
Cabrelli, Forte, Molter and Vrscay in 1992 considered a {fuzzy} version of the theory of iterated function systems (IFSs in short) and their fractals
, which now is quite rich and important part of the fractals theory.

On the other hand, Miculescu and Mihail in 2008 introduced another generalization of the IFSs' theory - instead of selfmaps of a metric space $X$, they considered mappings defined on the finite Cartesian product $X^m$. 

In this paper we show that the \emph{fuzzyfication} ideas of Cabrelli et al. can be naturally adjusted to the case of mappings defined on finite Cartesian product. In particular, we define the notion of a generalized iterated fuzzy function system (GIFZS in short) and prove that it generates a unique fuzzy fractal set. We also study some basic properties of GIFZSs and their fractals, and consider the question whether our setting gives us some new fuzzy fractal sets.

\end{abstract}
\vspace {.8cm}
\noindent\rule{\textwidth}{0.4mm}

\tableofcontents

\vspace {.8cm}

\section*{Introduction}  
One of the milestones of the fractals theory is the Hutchinson-Barnsley theorem from early 80's (\cite{MR977274}, \cite{MR0625600}) which states that if $X$ is a complete matric space and $f_0,...,f_{n-1}:X\to X$ are Banach contractions (i.e., their Lipschitz constants $Lip(f_j)<1$), then there is a unique nonempty and compact set $A\subset X$ such that
$$
A=f_1(A)\cup...\cup f_n(A)
$$
Such sets $A$ are called \emph{fractals} or \emph{attractors}, and systems $(X,(f_0,...,f_{n-1}))$ of continuous (contractive) maps are called \emph{iterated function systems} (IFSs for short).
It turns out that many interesting abstract sets, for example the Cantor ternary set or the Sierpi\'{n}ski triangle, are such fractals. Also some ``natural" objects, like trees, clouds etc., have a fractal structure in a certain scale and the Hutchinson-Barnsley fractals theory give nice tools for modelling them.

One direction of studies of the Hutchinson-Barnsley (HB for short) theory origines with the question if we can look at fractal sets as certain \emph{fuzzy} sets. The idea of fuzzy sets, introduced by Zadeh \cite{MR0219427} in 1965, is that instead of saying that some element $x$ belongs or not to a set $A$, we can say that it belong to $A$ \emph{in a certain degree}, where this ``degree" is some number from $[0,1]$. Such a nice idea attracted many mathematicians and found many applications. In particular, Cabrelli et al. \cite{MR1192494} introduced the fuzzy version of HB theory. In this setting, fractals can be fuzzy sets, and a given IFS is somehow \emph{fuzzied} by additional family of maps.

Another direction of investigations of IFSs' theory was initiated by Miculescu and Mihail \cite{MR2415407} 2008 (see also \cite{MR2449187}, \cite{MR2595825} and  \cite{MR3011940}). Instead of selfmaps of a metric space $X$, they considered mappings defined on finite Cartesian product of $X$ (they called systems of such mappings as \emph{generalized IFSs}, GIFSs for short). It turns out that such systems of mappings generate sets which can be considered as fractal sets, and many parts of classical theory have natural counterparts in such a framework. What is also important, the class of GIFSs' fractals is essentially wider than the class of classical IFSs' fractals (see Strobin \cite{MR3263451}).

Our goal in this paper is to unify this two approaches. We will define a fuzzy version of GIFSs and prove that under natural contractive conditions, such fuzzy systems generates fuzzy fractal sets. Also, we will investigate some properties of such fractals, and deal with the question whether our ``unification" generates some essentially new fuzzy fractal sets. Since we want our paper to be self-contained, we will recall some basics of fuzzy sets theory, as well as fractals theory of Cabrelli et al. and Miculescu and Mihail.

The paper is organized as follows. In the next section we give some topological preliminaries and background of fuzzy sets. In Section 2 we recall the fuzzy IFS theory and GIFSs' theory. Section 3 is devoted to introducing a fuzzy version of GIFSs and their fractals. Finally, in the last section we will study some further properties of them.

\section{Preliminaries}
\subsection{Topological background}
For the proofs of the results presented here you can check the excellent handbook Aliprantis and  Border~\cite{MR2378491}. Let $X$ be a fixed topological space and $\overline{\mathbb{R}}=\bR\cup\{-\infty,\infty\}$ be the extended set of real numbers.

\begin{definition} \emph{We said that $u: X \to \overline{\mathbb{R}}$ is} upper semicontinuous\emph{ (usc) if, for each $c \in \mathbb{R}$ the set $u^{-1}([c, +\infty]):=\{ x \in X \; | \; u(x) \geq c\}$ is closed. Analogously,  we said that $u: X \to \overline{\mathbb{R}}$ is  }lower semicontinuous\emph{ (lsc) if $(-f)$ is usc.}
\end{definition}

The proof of the next two lemmas can be found in  \cite{MR2378491}, Lemma 2.41 and 2.42, p. 43.
\begin{lemma} Let $u_{t}: X \to \overline{\mathbb{R}}$, $t \in \mathcal{T}$ be a family of usc (resp. lsc) functions.  Then, the pointwise supremum (resp. infimum)
$$u(x):=\sup_{t \in \mathcal{T}}u_t(x)$$ is a usc  (resp. lsc)  function.
\end{lemma}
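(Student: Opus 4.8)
The plan is to prove both assertions in parallel directly from the super- and sub-level set characterisation of semicontinuity in Definition 1.1, since in each case the claim reduces to a statement about closed sets.

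First I would treat the supremum. Put $u=\sup_{t\in\mathcal T}u_t$ with every $u_t$ usc and fix $c\in\mathbb{R}$; by definition it suffices to show that $\{x:u(x)\ge c\}$ is closed. The step I would rely on is the set identity
$$\{x:u(x)\ge c\}=\bigcup_{t\in\mathcal T}\{x:u_t(x)\ge c\},$$
valid whenever the supremum is attained pointwise, which records that $\sup_t u_t(x)\ge c$ exactly when the threshold $c$ is already met by some member of the family. Each set $\{x:u_t(x)\ge c\}$ is closed because $u_t$ is usc, so the right-hand side is a union of closed sets, and closedness of the whole union yields that $u$ is usc. The infimum/lsc case is entirely dual: with $u=\inf_{t}u_t$ and each $u_t$ lsc I would instead show that the sub-level set $\{x:u(x)\le c\}$ is closed through the identity $\{x:u(x)\le c\}=\bigcup_{t}\{x:u_t(x)\le c\}$, each factor being closed by lower semicontinuity.

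The step I expect to be the main obstacle is precisely the passage from \emph{a union of closed sets} to \emph{a closed set}, together with the attainment of the supremum (resp.\ infimum) that underlies the displayed identity. Both are automatic when the index set $\mathcal T$ is finite --- the setting of a system built from finitely many maps that we use throughout --- because a finite union of closed sets is closed and a finite supremum is a maximum. For an unrestricted family these two points may fail, so there the threshold condition $\ge c$ must be traded for the complementary open condition; accordingly I would either state the lemma for finite $\mathcal T$ or supply the uniform neighbourhood argument that secures the closedness. Once finiteness is in place, the remainder is a routine verification against Definition 1.1.
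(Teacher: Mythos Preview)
The paper does not prove this lemma at all; it simply cites Aliprantis--Border, Lemmas~2.41 and~2.42. So there is no in-paper proof to compare against.

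More to the point, your suspicion that something is wrong is entirely justified: as written --- for an arbitrary index set $\mathcal{T}$ --- the statement is \emph{false}. The standard result (and what actually appears in Aliprantis--Border) is that the pointwise \emph{infimum} of usc functions is usc and the pointwise \emph{supremum} of lsc functions is lsc; the paper has the pairing reversed. A concrete counterexample: on $\mathbb{R}$ the tent functions $u_n(x)=\max\{\min\{nx,\,1,\,n(1-x)\},0\}$ are continuous, hence usc, but $\sup_n u_n=\chi_{(0,1)}$ is not usc. What rescues every application in the paper is that only \emph{finite} suprema of usc maps are ever formed (in the operators $\mathcal{Z_R}$ and $\mathcal{Z_S}$), and for finite $\mathcal{T}$ the claim is true.

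Your argument for finite $\mathcal{T}$ is correct: the identity $\{\max_t u_t\ge c\}=\bigcup_t\{u_t\ge c\}$ holds because a finite supremum is attained, and a finite union of closed sets is closed. You were right to isolate both the attainment of the supremum and the closedness of the union as the two places where finiteness is essential, and right to note that for general $\mathcal{T}$ the proof cannot be salvaged --- because the statement itself fails. The only adjustment I would make is to drop the vague ``uniform neighbourhood argument'' fallback for infinite $\mathcal{T}$: no such repair exists, and the honest move is simply to restrict the lemma to finite families.
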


\begin{lemma} Let $u: X \to \overline{\mathbb{R}}$. $u$ is usc  (resp. lsc)  function if and only if for every net $(x_t)_{t\in\mathcal{T}}\subset X$ with $x_{t} \to x$ it follows that $\displaystyle \limsup_{x_t \to x} u(x_t) \leq u(x)$ (resp. $\displaystyle \liminf_{x_t \to t} u(x_t) \geq u(x)$). If $X$ is first countable (i.e., each point has a countable
neighborhood base, for example a metric space) the net $(x_t)_{t\in \mathcal{T}}$ can be replaced by a sequence.
\end{lemma}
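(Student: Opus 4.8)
The plan is to establish the usc equivalence; the lsc case then follows immediately by applying it to $-u$, since $\liminf_t(-a_t)=-\limsup_t a_t$. Recall that along a net the relevant quantity is $\limsup_t u(x_t)=\inf_{t_0}\sup_{t\succeq t_0}u(x_t)$.

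($\Rightarrow$) Suppose $u$ is usc and let $(x_t)_{t\in\mathcal{T}}$ be a net with $x_t\to x$. Argue by contradiction: if $\limsup_t u(x_t)>u(x)$, pick a real $c$ with $u(x)<c<\limsup_t u(x_t)$. By the definition of $\limsup$ along a net, for every index $t_0$ there is $t\succeq t_0$ with $u(x_t)\ge c$, and this selection produces a subnet of $(x_t)$ contained in $u^{-1}([c,+\infty])$. That set is closed by hypothesis, and the subnet still converges to $x$, so $x\in u^{-1}([c,+\infty])$, i.e.\ $u(x)\ge c$ --- contradicting the choice of $c$.

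($\Leftarrow$) Assume the net inequality holds and fix $c\in\mathbb{R}$; the claim is that $u^{-1}([c,+\infty])$ is closed. If $x$ lies in its closure, choose a net $(x_t)$ in $u^{-1}([c,+\infty])$ with $x_t\to x$. Then $u(x_t)\ge c$ for all $t$, hence $\limsup_t u(x_t)\ge c$, and the hypothesis gives $u(x)\ge\limsup_t u(x_t)\ge c$. Thus $x\in u^{-1}([c,+\infty])$ and $u$ is usc.

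For first countable $X$ the two arguments are repeated verbatim with sequences replacing nets: in $(\Rightarrow)$ a sequence is in particular a net, and in $(\Leftarrow)$ one uses that the closure of a set in a first countable space consists exactly of limits of sequences drawn from the set, so closedness of $u^{-1}([c,+\infty])$ need only be tested against convergent sequences. The one point I would be careful about is the subnet extraction in $(\Rightarrow)$ --- making the directed index set of the subnet precise and checking that a subnet of a net converging to $x$ again converges to $x$; the rest is a routine unwinding of the definitions of semicontinuity and of $\limsup$/$\liminf$ along a net.
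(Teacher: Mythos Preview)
Your proof is correct. Note, however, that the paper itself does not prove this lemma: it simply refers the reader to Aliprantis--Border \cite{MR2378491}, Lemma~2.42, so there is no ``paper's own proof'' to compare against beyond the standard textbook argument.

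One small remark on the point you yourself flag: the subnet extraction in $(\Rightarrow)$ can be avoided entirely, which makes the argument cleaner. Once you have chosen a real $c$ with $u(x)<c$, observe that $u^{-1}([c,+\infty])$ is closed and does not contain $x$, so its complement is an open neighbourhood of $x$. Since $x_t\to x$, the net is eventually in that complement, i.e.\ eventually $u(x_t)<c$, whence $\limsup_t u(x_t)\le c$. Letting $c\downarrow u(x)$ gives the inequality directly, with no subnet bookkeeping needed. Your contradiction route via a subnet is also valid, but one must then be explicit about the directed index set (e.g.\ take pairs $(t_0,t)$ with $t\succeq t_0$ and $u(x_t)\ge c$, ordered by the first coordinate) to verify it really is a subnet in the Kelley sense.
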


The following result generalizes the Weierstrass theorem.
\begin{theorem} If $u: K\subseteq X \to \overline{\mathbb{R}}$ is  a usc  (resp. lsc)  function on the compact $K$, then $u$ attains its maximum (resp. minimum) value $\max_K u$ (resp. $\min_Ku$) and the set $$\mathrm{argmax}(u):= \{ x \in K \; | \; u(x)=\max_K u\}\;\;\;\;\;\mbox{(resp. }\mathrm{argmin}(u):=\{ x \in K \; | \; u(x)=\min_K u\}\mbox{)}$$ is nonempty and compact.
\end{theorem}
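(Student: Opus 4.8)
The plan is to reduce everything to the upper semicontinuous case, since if $u$ is lsc then $-u$ is usc, and $\min_K u=-\max_K(-u)$ while $\mathrm{argmin}(u)=\mathrm{argmax}(-u)$; so assume $u\colon K\to\overline{\mathbb R}$ is usc with $K$ compact and nonempty, and put $M:=\sup_K u\in\overline{\mathbb R}$. If $M=-\infty$, then $u\equiv-\infty$, the supremum is attained at every point, and $\mathrm{argmax}(u)=K$ is nonempty and compact; so we may assume $M>-\infty$. The overall strategy is to realize $\mathrm{argmax}(u)$ as an intersection of superlevel sets and exploit compactness of $K$ via the finite intersection property, rather than arguing with maximizing sequences (though the net characterization recalled above gives an alternative, sketched below).

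The key steps, in order, are the following. First, for each real $c<M$ consider the superlevel set
\[
F_c:=u^{-1}\bigl([c,+\infty]\bigr)=\{x\in K:\ u(x)\ge c\}.
\]
Each $F_c$ is nonempty, because $c<\sup_K u$ means $c$ is not an upper bound of $u$ on $K$. Each $F_c$ is closed in $K$ by the very definition of upper semicontinuity, hence compact as a closed subset of the compact space $K$. The family $\{F_c:\ c\in\mathbb R,\ c<M\}$ is totally ordered by reverse inclusion, so it has the finite intersection property, since $F_{c_1}\cap\cdots\cap F_{c_k}=F_{\max\{c_1,\dots,c_k\}}\neq\emptyset$. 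Compactness of $K$ then forces $\bigcap_{c<M}F_c\neq\emptyset$. Picking any $x_0$ in this intersection, we get $u(x_0)\ge c$ for every real $c<M$, hence $u(x_0)\ge M$; together with $u(x_0)\le\sup_K u=M$ this yields $u(x_0)=M$. So $u$ attains its supremum, $\max_K u$ exists and equals $M$, and $x_0\in\mathrm{argmax}(u)$, so $\mathrm{argmax}(u)\neq\emptyset$.

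Finally, for compactness of $\mathrm{argmax}(u)$ I would observe that it is itself (an intersection of) superlevel set(s): if $M\in\mathbb R$ then $\mathrm{argmax}(u)=u^{-1}([M,+\infty])$ is closed in $K$ by upper semicontinuity, while if $M=+\infty$ then $\mathrm{argmax}(u)=u^{-1}(\{+\infty\})=\bigcap_{c\in\mathbb R}u^{-1}([c,+\infty])$ is an intersection of closed sets, hence again closed in $K$; in both cases $\mathrm{argmax}(u)$ is a closed subset of the compact $K$, hence compact.

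I do not expect a serious obstacle here; the only points that need a little care are the bookkeeping on the extended real line — in particular allowing $M=+\infty$ (a usc function on a compact set need not be bounded above: it may genuinely take the value $+\infty$) and the degenerate case $M=-\infty$ — and the structural observation that $\mathrm{argmax}(u)$ is a superlevel set, which is exactly what makes it closed. An alternative to the finite-intersection argument, if preferred, is to use the net characterization of semicontinuity recalled above: choose a net $(x_t)$ in $K$ with $u(x_t)\to M$, extract by compactness a subnet converging to some $x_0\in K$, and conclude $M=\limsup u(x_t)\le u(x_0)\le M$, which already gives attainment; one would still invoke the closed-set argument above to get compactness of $\mathrm{argmax}(u)$.
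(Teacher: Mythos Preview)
Your argument is correct and complete; the reduction to the usc case, the finite-intersection-property argument on the superlevel sets $F_c=u^{-1}([c,+\infty])$, and the handling of the extended-real edge cases $M=-\infty$ and $M=+\infty$ are all in order, and the closing observation that $\mathrm{argmax}(u)$ is itself a (possibly nested intersection of) superlevel set(s) gives compactness cleanly.

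As for comparison with the paper: there is nothing to compare against. The paper states this theorem in its preliminaries but does not supply a proof; it simply refers the reader to Aliprantis and Border~\cite{MR2378491} for proofs of the results in that section. Your write-up therefore goes beyond what the paper provides, and could serve as a self-contained proof were one desired.
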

Now let us recall the Banach Fixed Point theorem.
\begin{theorem}
 \label{Banach Fixed Point theorem} Let $(A,d)$ be a complete metric space. Given a contraction $F: A  \to A$, there exists a unique $a \in A$ such that $F(a)= a.$
Moreover, for every $a_0 \in A$, the sequence $a_k, \; k \geq 0$ defined by
$$a_{k+1}:=F(a_{k }),$$
for all $k \in \mathbb{N}$, is convergent to $a$.
\end{theorem}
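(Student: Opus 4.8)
The plan is to run the standard Picard iteration argument. Fix a contraction ratio $q\in[0,1)$ with $d(F(x),F(y))\le q\,d(x,y)$ for all $x,y\in A$, fix an arbitrary starting point $a_0\in A$, and define $a_{k+1}:=F(a_k)$. First I would prove by induction that $d(a_{k+1},a_k)\le q^k\,d(a_1,a_0)$ for all $k\in\mathbb{N}$, the induction step being immediate from the contraction inequality applied to $a_k=F(a_{k-1})$ and $a_{k-1}=F(a_{k-2})$.

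The key step is then to show $(a_k)$ is a Cauchy sequence. For $k<\ell$ I would chain the triangle inequality, $d(a_\ell,a_k)\le\sum_{j=k}^{\ell-1}d(a_{j+1},a_j)\le\left(\sum_{j=k}^{\ell-1}q^j\right)d(a_1,a_0)\le\frac{q^k}{1-q}\,d(a_1,a_0)$, and since $q^k\to 0$ this tail bound can be made arbitrarily small uniformly in $\ell$, giving the Cauchy property. By completeness of $(A,d)$ there is $a\in A$ with $a_k\to a$; this also establishes the ``moreover'' part of the statement once we identify $a$ as the fixed point.

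To see $F(a)=a$, I would note that a contraction is (Lipschitz, hence) continuous, so $F(a)=F(\lim_k a_k)=\lim_k F(a_k)=\lim_k a_{k+1}=a$. Alternatively one can avoid invoking continuity directly and estimate $d(F(a),a)\le d(F(a),a_{k+1})+d(a_{k+1},a)\le q\,d(a,a_k)+d(a_{k+1},a)\to 0$. Finally, for uniqueness, if $F(a)=a$ and $F(b)=b$ then $d(a,b)=d(F(a),F(b))\le q\,d(a,b)$, so $(1-q)\,d(a,b)\le 0$, forcing $d(a,b)=0$ and hence $a=b$.

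There is no real obstacle here: the only point requiring a little care is making the Cauchy estimate uniform in the larger index $\ell$, which the closed-form bound $\frac{q^k}{1-q}d(a_1,a_0)$ handles cleanly; everything else is a routine application of the triangle inequality and completeness.
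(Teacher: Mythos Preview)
Your argument is the standard Picard iteration proof and is correct in every detail. Note, however, that the paper does not actually prove this theorem: it is merely recalled in the preliminaries (``Now let us recall the Banach Fixed Point theorem'') as a classical result, with no proof given. So there is nothing to compare against; your write-up simply supplies the omitted textbook proof.
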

Finally, let us present the ``Collage Theorem" (the proof can be found in \cite{MR977274} of  Barnsley):
\begin{theorem}``Collage Theorem'' \label{Collage Theorem}  Let $(A,d)$ be a complete metric space and $T: A \to A$ be a Lipschitz contractive map, that is, $\mathrm{Lip(T)} <1$. Then for any $u \in A$ we have
$$d(u, u^*) \leq \frac{1}{1-\mathrm{Lip(T)}} \;d(u, T(u))$$
where $u^*$ is the unique fixed point of $T$.
\end{theorem}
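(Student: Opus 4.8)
The plan is to deduce the estimate directly from the Banach Fixed Point theorem (Theorem~\ref{Banach Fixed Point theorem}), which already guarantees the existence and uniqueness of the fixed point $u^*$, so nothing needs to be re-proved there. The only remaining task is a quantitative bound on the distance from an arbitrary point $u$ to $u^*$ in terms of the ``collage error'' $d(u,T(u))$.

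First I would apply the triangle inequality to split $d(u,u^*)$ through the point $T(u)$:
$$d(u,u^*)\leq d(u,T(u))+d(T(u),u^*).$$
Then, using that $u^*=T(u^*)$ and that $T$ is Lipschitz with $\mathrm{Lip}(T)<1$, I would bound the second term by
$$d(T(u),u^*)=d(T(u),T(u^*))\leq \mathrm{Lip}(T)\,d(u,u^*).$$
Substituting this into the previous inequality gives $d(u,u^*)\leq d(u,T(u))+\mathrm{Lip}(T)\,d(u,u^*)$, and since $d(u,u^*)<\infty$ (both points lie in the metric space $A$), I can move the term $\mathrm{Lip}(T)\,d(u,u^*)$ to the left-hand side and divide by the positive number $1-\mathrm{Lip}(T)$ to obtain exactly
$$d(u,u^*)\leq \frac{1}{1-\mathrm{Lip}(T)}\,d(u,T(u)).$$

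An equivalent route, if one prefers to avoid invoking finiteness of $d(u,u^*)$ a priori, is the telescoping argument: since $T^k(u)\to u^*$ by Theorem~\ref{Banach Fixed Point theorem}, one has $d(u,u^*)\leq\sum_{k=0}^{\infty}d\big(T^k(u),T^{k+1}(u)\big)\leq\sum_{k=0}^{\infty}\mathrm{Lip}(T)^k\,d(u,T(u))=\frac{1}{1-\mathrm{Lip}(T)}\,d(u,T(u))$, where the middle inequality iterates the Lipschitz estimate $d(T^k(u),T^{k+1}(u))\leq\mathrm{Lip}(T)\,d(T^{k-1}(u),T^k(u))$ and the last equality is the geometric series. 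There is essentially no obstacle here; the only point requiring a moment's care is ensuring $d(u,u^*)$ is finite before rearranging in the first approach (automatic in a genuine metric space), or, in the second approach, justifying the interchange of limit and sum, which is immediate since all terms are nonnegative.
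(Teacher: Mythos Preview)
Your proof is correct; both the triangle-inequality rearrangement and the telescoping geometric-series route are standard and valid. The paper itself does not give a proof of this statement at all --- it simply cites Barnsley~\cite{MR977274} --- so there is nothing in the paper to compare your argument against beyond noting that what you wrote is the classical proof one finds in that reference.
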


\subsection{Basic definitions on Fuzzy Sets}
Let $X$ be a set.
\begin{definition} \emph{We say that $u$ is }a fuzzy subset of $X$\emph{ if $u: X \to [0,1]$. The family of fuzzy subsets of $X$ is denoted by $\mathcal{F}_{X}$, that is}
$$\mathcal{F}_{X}:=\{ u \; | \; u: X \to [0,1]\}.$$
\end{definition}

In this theory \emph{fuzzy set} means the that each point $x$ has a grade of membership  $0\leq u(x)\leq 1$ in the set $u$. Here, $u(x)=0$ indicates that $x$ is not in $u$ and $u(x)=0.4$ indicates that $x$ is a member of $u$ with membership degree $0.4$.

\begin{figure}[h!]
  \centering
  \includegraphics[width=2cm]{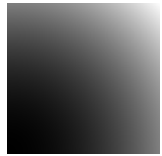}
  \caption{Representation of the fuzzy set $u(x,y)=1/2 (x^2 + y^2)$ in $X=[0,1]^2$ as a grey scale figure.  In this case $u(1,1)=1$=a white pixel and $u(0,0)=0$=a black pixel.}\label{example fuzzy set}
\end{figure}

\begin{definition}\label{grey level} \emph{Given $\alpha \in (0,1]$ and $u \in \mathcal{F}_{X}$, }the grey level \emph{or} $\alpha$-cut of $u$\emph{ is the crisp set
$$[u]^{\alpha}:=\{x \in X \; | \; u(x)\geq \alpha \},$$
that is, the set of points where the grey level exceeds the threshold value $\alpha$.
For $\alpha=0$ we define
$$[u]^{0}:={\operatorname{supp}(u):=} \overline{\bigcup\{[u]^{\alpha} \; | \;  \alpha >0\}}=\overline{\{x\in X:u(x)>0\}}$$}
\end{definition}
\begin{remark}\label{grey level'}
\emph{Observe that
$$[u]^{0}= \overline{\bigcup_{n=0}^{\infty}[u]^{\alpha_{n}}},$$
for every sequence $(\alpha_n)$ of positive reals with $\alpha_n \searrow 0$. It happens because the sequence of sets $[u]^{\alpha_{n}}$ is nondecreasing since  $[u]^{\alpha_{n}} \subseteq [u]^{\alpha_{n+1}} $ when $\alpha_{n} > \alpha_{n+1}$. In particular, the set $\bigcup_{n=0}^{\infty}[u]^{\alpha_{n}}$ (which is F-$\sigma$ provided $u$ is usc) is dense in $[u]^{0}$.}
\end{remark}

\begin{definition} \emph{A fuzzy set $u \in \mathcal{F}_{X}$ is\\
a) }a crisp set\emph{, if $u(x) \in \{0, 1\}$ for every $x\in X$. We identify it with the classic subset $U=\{ x \in X \; |\; u(x)=1 \}$. In this case, $u$ is the indicator function of $U$: $u(x)=\bchi_{U}(x)$;\\
b) }normal, \emph{if there is $x \in X$ such that $u(x)=1$;\\
c) }the universe, \emph{if $u(x)\equiv 1 = \bchi_{X}(x) $;\\
d) }empty, \emph{if $u(x)\equiv 0 = \bchi_{\varnothing}(x) $.\\
If additionally $X$ is a topological space, then we say that $u$ is\\
e) }compactly supported\emph{ if $[u]^0$ is compact.}
\end{definition}

Actually, the family of subsets of $X$, denoted by $2^X$, can be identified as a subset of $\mathcal{F}_X$, using the injective map $\chi: 2^X \to  \mathcal{F}_{X}$ defined by
$\bchi(B)=\bchi_{B}(x),$
for any $B \in 2^X$.

Given $f,g : X \to \mathbb{R}$, is usual to denote
$(f \vee g)(x) := \max \{f(x) , g(x)\} \text{ and } (f \wedge g)(x) := \min \{f(x) , g(x)\}.$ It shows how to define the fuzzy algebra of subsets.

\begin{definition} \emph{Given $u, v \in \mathcal{F}_{X}$ we define:\\
a) $u \cup v := u \vee v \in  \mathcal{F}_{X}$ and $u \cap v := u \wedge v \in  \mathcal{F}_{X}$,  }the union\emph{ and }the intersection of $u$ and $v$\emph{, respectively;\\
b) $u' := 1 - u \in  \mathcal{F}_{X}$,  }the complement\emph{ of $u$;\\
c) $u \subseteq v$ if $u(x) \leq v(x), \; \forall x \in X$,} the inequality relation.
\end{definition}
\begin{remark}\emph{It is well known that
the basic operations $\cup$ and $\cap$ with fuzzy sets:\\
a) are associative and distributive;\\
b) satisfies De Morgan's Laws
$(u \cap v)'=u' \cup v'\text{ and } (u \cup v)'=u' \cap v'.$
}
\end{remark}

We also have other algebraic operations.
\begin{definition} \emph{Given $u, v \in \mathcal{F}_{X}$ we define:\\
a) $u  v := u(x)v(x) \subseteq u \cap v \in \mathcal{F}_{X}$, }the algebraic product\emph{;\\
b) $u + v := \min\{u(x)+ v(x),1\} \in \mathcal{F}_{X}$, }the algebraic sum\emph{;\\
c) $|u - v |:= |u(x)-v(x)| \in \mathcal{F}_{X}$, }the absolute difference\emph{.\\
d) $ t u + (1-t)v \in \mathcal{F}_{X}$ for $t \in [0,1]$, }the convex combination of $u$ and $v$;\\
e) $(u,v)_{\Delta}:= \Delta(x) u + \Delta'(x) v$, the $\Delta$-convex combination of $u$ and $v$\emph{, where $\Delta \in \mathcal{F}_{X}$.}
\end{definition}

Fuzzy sets can be induced by maps. In his pioneering work in the 1965 Zadeh~\cite{MR0219427}, p. 346,  introduced what we call \emph{The Extension Principle}, that is a kind of pushforward map between fuzzy subsets.

\begin{definition}(Zadeh's Extension Principle) \emph{Given a map $T: X \to Y$, $ u \in \mathcal{F}_{X}$ and $v \in \mathcal{F}_{Y}$, we define new fuzzy sets $T(u)\in \mathcal{F}_{Y}$ and $T^{-1}(v)\in \mathcal{F}_{X}$ as follows\\
a) $T(u): Y \to [0,1]$ is given by
$$T(u)(y) :=
\left\{
  \begin{array}{ll}
    \sup_{T(x)=y} u(x), & if \; y \in T(X); \\
    0, & \mathrm{otherwise.}
  \end{array}
\right.
$$
b) $T^{-1}(v): X \to [0,1]$ is given by
$$T^{-1}(v)(x) := v(T(x)).$$}
\end{definition}
\begin{remark}\emph{
In He et al.~\cite{MR1729417}, Definitions 2.1, 2.2 and 2.3, we can found some alternative ways to define the extension principle for real~\footnote{After Zadeh's works the Fuzzy Set theory has been extended in several ways. In a wider sense, given $X$ a set and $R$ being usually some topological space, we define a $R$-valued fuzzy set as a function $u: X \to R_{g}$, where $R_g$, called the range, is a compact subset of $R$. In this paper, $X$ is a complete (or even compact) metric space, $R=\mathbb{R}$ and $R_g=[0,1]$. Measure-valued, set-valued, interval-valued and type-k ($R_g$ is a hypercube in $\mathbb{R}^k$) fuzzy sets are frequently considered in applications.} valued fuzzy sets, }The Supremum Extension Principle\emph{ (Zadeh's Extension Principle), }The Minimum Extension Principle\emph{ (the same as Zadeh's Extension Principle, replacing by minimization on the preimages) and }The Average Extension Principle\emph{, respectively. If $T^{-1}(y)$ is always finite we can define the Average Extension Principle
$$\tilde{T}(u)(y) :=
\left\{
  \begin{array}{ll}
    \frac{1}{\sharp\{ x|T(x)=y\}}\sum_{T(x)=y} u(x), & if \; y \in T(X); \\
    0, & \mathrm{otherwise.}
  \end{array}
\right.
$$
Obviously $0\leq \tilde{T}(u) \leq T(u)\leq 1$.}
\end{remark}

\begin{remark}\label{Crisp Set Extension to Fuzzy Set}\emph{
It may be instructive to see how $T$ works for a crisp set. If  $u(x)=\bchi_{B}(x)\in \mathcal{F}_{X},$ for some $B \in 2^X$, we get\\
$$T(u)(y) = \sup_{T(x)=y} u(x)= \sup_{T(x)=y} \bchi_{B}(x)=
\left\{
 \begin{array}{ll}
 1, & if \; y \in T(B) \\
0, & if y \; \not\in T(B)
\end{array}
\right. = \bchi_{T(B)}(y).
$$
That is, $T(\bchi_{B})= \bchi_{T(B)}$.\\
Similarly, $T^{-1}(\bchi_{C})=\bchi_{T^{-1}(C)}$ for all $C\in 2^Y$.}
\end{remark}

\begin{proposition}\label{compose usc plus cont}
Assume that $X$ and $Y$ be metric spaces and $f:X \to Y$ a continuous map. Given $u \in \mathcal{F}_{X}$ we have\\
a) If $u$ is normal then $f(u)$ is normal;\\
b) If $u$ is usc and compactly supported, then  $f(u)$ is usc and compactly supported.
\end{proposition}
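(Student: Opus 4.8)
The plan is to treat the two assertions separately: (a) is essentially immediate from Zadeh's extension principle, whereas (b) needs upper semicontinuity of $u$ together with compactness of its support. For (a), suppose $u$ is normal and pick $x\in X$ with $u(x)=1$; set $y:=f(x)$. Then $y\in f(X)$ and, by the definition of $f(u)$, $f(u)(y)=\sup_{f(x')=y}u(x')\ge u(x)=1$, and since $f(u)$ is $[0,1]$-valued this forces $f(u)(y)=1$. Hence $f(u)$ is normal.

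For (b) I would first dispose of the support. I claim $\{y\in Y:f(u)(y)>0\}\subseteq f([u]^0)$: indeed, if $f(u)(y)>0$ then $y\in f(X)$ and $\sup_{f(x')=y}u(x')>0$, so some $x'$ satisfies $f(x')=y$ and $u(x')>0$, whence $x'\in\{u>0\}\subseteq[u]^0$ and $y=f(x')\in f([u]^0)$. Because $[u]^0$ is compact and $f$ continuous, $f([u]^0)$ is compact, hence closed in the metric space $Y$; taking closures yields $[f(u)]^0=\overline{\{y:f(u)(y)>0\}}\subseteq f([u]^0)$. A closed subset of a compact set is compact, so $[f(u)]^0$ is compact, i.e.\ $f(u)$ is compactly supported.

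It then remains to show $f(u)$ is usc. Since $0\le f(u)\le 1$, by the definition of upper semicontinuity it is enough to check that $[f(u)]^{\alpha}=\{y:f(u)(y)\ge\alpha\}$ is closed for each $\alpha\in(0,1]$ (for real $c\le 0$ the set $\{f(u)\ge c\}$ is all of $Y$, and for $c>1$ it is empty). The crucial step is the identity
\[
[f(u)]^{\alpha}=f\big([u]^{\alpha}\big),\qquad \alpha\in(0,1].
\]
The inclusion ``$\supseteq$'' is trivial: if $u(x)\ge\alpha$ then $f(u)(f(x))\ge u(x)\ge\alpha$. For ``$\subseteq$'', fix $y$ with $f(u)(y)\ge\alpha>0$. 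The fibre $F:=f^{-1}(\{y\})$ is closed by continuity of $f$, so $F\cap[u]^0$ is compact; it is nonempty since $\sup_{F}u\ge\alpha>0$ produces some $x'\in F$ with $u(x')>0$, hence $x'\in F\cap[u]^0$. Points of $F\setminus[u]^0$ carry value $0$, so $\sup_{F\cap[u]^0}u=\sup_{F}u=f(u)(y)\ge\alpha$. By the generalized Weierstrass theorem the usc function $u$ attains this supremum over the compact set $F\cap[u]^0$ at some point $x^*$, so $u(x^*)\ge\alpha$ and $y=f(x^*)\in f([u]^{\alpha})$. This proves the identity. Finally, $[u]^{\alpha}$ is closed (upper semicontinuity of $u$) and contained in the compact set $[u]^0$, hence compact, so $f([u]^{\alpha})$ is compact and therefore closed, giving the closedness of $[f(u)]^{\alpha}$.

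I expect the one genuinely delicate point to be the inclusion $[f(u)]^{\alpha}\subseteq f([u]^{\alpha})$: the supremum defining $f(u)(y)$ ranges over the fibre $f^{-1}(\{y\})$, which need not be compact, so a priori it may fail to be attained at a point where $u\ge\alpha$. The remedy is to intersect the fibre with the compact support $[u]^0$ and apply the generalized Weierstrass theorem; this is exactly where both hypotheses on $u$ — compact support and upper semicontinuity — are used, and dropping compactness of the support can make the conclusion fail.
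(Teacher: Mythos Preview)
Your proof is correct. Parts (a) and the compact-support half of (b) match the paper's argument almost verbatim. For the upper-semicontinuity half of (b), however, you take a genuinely different route. The paper argues sequentially: given $(b_n)\subset[f(u)]^c$ with $b_n\to b$, it fixes $\varepsilon\in(0,c)$, chooses preimages $a_n$ with $u(a_n)\ge c-\varepsilon$, uses compactness of the level set $[u]^{c-\varepsilon}$ to extract a convergent subsequence $a_{n_k}\to a$, obtains $f(u)(b)\ge u(a)\ge c-\varepsilon$, and finally lets $\varepsilon\downarrow 0$. You instead prove the sharper level-set identity $[f(u)]^\alpha=f([u]^\alpha)$ for $\alpha\in(0,1]$ directly, by intersecting the fibre $f^{-1}(\{y\})$ with the compact support and invoking the generalized Weierstrass theorem so that the defining supremum is actually attained. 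Your argument is cleaner, avoids the $\varepsilon$-trick entirely, and produces as a byproduct exactly the identity the paper later records (without proof) as Lemma~\ref{general properties of fuzzy composition}(b). The paper's sequential approach, on the other hand, does not require knowing in advance that the supremum is attained, which makes it slightly more robust in settings where one might want to weaken the compactness hypothesis.
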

\begin{proof}
a) Suppose that $u$ is normal, that is, there exists $a \in X$ such that $u(a)=1$. Let us evaluate $f(u)$ in $b=f(a)$
\[1 \geq f(u)(b) =
\left\{
  \begin{array}{ll}
    \sup_{f(x)=b} u(x), & if \; b \in f(A) \\
    0, & otherwise
  \end{array}
\right.
 \geq u(a) =1,\]
so $f(u)(b) =1$.\\
b) Assume that $u$ usc and compactly supported. We need to prove that $f(u)$ is so. At first, we prove that $f(u)^{-1}([c, +\infty])$ is closed for any $c \in \overline{\mathbb{R}}$. Since $0 \leq f(u) \leq 1$ we have
$$ f(u)^{-1}([c, +\infty]) =
  \begin{cases}
     \varnothing, & if \; 1<c\\
     [f(u)]^{c},  & if \; 0< c \leq 1\\
     B,  & if \; c\leq 0
  \end{cases}
$$
Since $\varnothing$ and $Y$ are closed, remains to prove that $[f(u)]^{c}$ for $0< c \leq 1$.

Let $(b_n) \subset [f(u)]^{c}$ and $b$ be its limit. We claim that $b \in [f(u)]^{c}$.  Since $f(u)(b_n) \geq c >0$ thus $f^{-1}(b_n) \neq \varnothing$. Now fix $\varepsilon\in(0,c)$ and for any $n\in\N$, let $a_n \in X$ be such that $u(a_n) \geq c-\varepsilon>0$ and $f(a_n)=b_n$. Since $u$ is usc and compactly supported, and $(a_n)\subset [u]^{c-\varepsilon}$, there is a subsequence $(a_{n_k})$ such that  $a_{n_{k}} \to a$ for some $a\in[u]^{c-\varepsilon}$. Also, by continuity of $f$, we have $b=f(a)$.
Thus
$$f(u)(b) =    \sup_{f(x)=b} u(x) \geq u(a) \geq c-\varepsilon$$
Since $\varepsilon$ was taken arbitrarily, we have $f(u)(b)\geq c$, which means that
$b \in [f(u)]^{c}$. So $[f(u)]^{c}$ is closed.\\
To see that $f(u)$ is compactly supported, observe that for any $y\in Y$ with $f(u)(y)>0$, there exists $x\in X$ such that $u(x)>0$ and $f(x)=y$. Hence
$$\{y\in Y:f(u)(y)>0\}\subset f(\{x\in X:u(x)>0\})\subset f (\overline{\{x\in X:u(x)>0\}})
$$
since the last set is compact (as $f$ is continuous and $u$ is compactly supported), we get that also $$\overline{\{y\in Y:f(u)(y)>0\}}$$ is compact. Hence $f(u)$ is compactly supported.

\end{proof}

\section{IFS fuzzyfication and generalized IFSs}

\textbf{To avoid any confusions we will reserve the calligraphic  $\mathcal{R}$ exclusively for IFS and the calligraphic  $\mathcal{S}$ will be reserved exclusively for generalized IFS that we will consider after.}

Now we turn our attention to the discrete dynamics of fuzzy sets.
On one hand the IFS offers for each iterate $\phi_j$ one of the possible positions $\phi_j(x)$ from an initial point $x$, that is the dynamics. On the other hand the IFZS offers one of the possibility functions $u_j=\phi_j u$ from an initial possibility function $u(x)$ that is, now we have a dynamics of possibility functions where $u(x)$ represents the possibility of a ``particle" be in the site $x \in X$ and $u_j(x)$ represents the possibility of the iteration of a ``particle" be in the site $\phi_j(x)$. The analogy is that in the classical mechanics the dynamics is given by a differential equation that defines the position $x$ and, when we make a quantification we deal with the evolution of probability distribution of the position via a unitary operator. We are going to develop this ideas using the notion of fuzzy sets. From the fuzzy point of view the possibility of a ``particle" be in the site $x \in X$, a metric space, is some number $u(x) \in [0,1]$, so the iterations generated by an IFS of the function $u$ must go through an appropriated fuzzy operator producing a new fuzzy set. We advise that it is not a probabilistic theory.

\subsection{IFS fuzzification}

The word fuzzification has several uses in the literature. Here, \emph{fuzzification} means to consider the analogous for fuzzy sets of the Hutchinson-Barnsley Theory for IFS acting on classical sets. The main ideas were developed in Cabrelli et al. \cite{MR1192494}. We repeat some results here to help the reader with no familiarity with this theory. Note that we extended a bit some of them in view of our study of GIFZSs in the next section.

We assume here that $(X,d)$ is a given metric space. Recall that the family of (real valued) fuzzy subsets of $X$ is defined by
$$\mathcal{F}_{X}:=\left\{ u \; | \; u: X \to [0,1] \text{ is a function}\right\},$$
and if $u\in\mathcal{F}_X$ and $\alpha\in(0,1]$, then $[u]^\alpha:=\{x\in X:u(x)\geq \alpha\}$ and also $[u]^0:=\overline{\{x\in X:u(x)>0\}}$.

To make this theory works we need to restrict $\mathcal{F}_{X}$ to a smaller family,
$$\mathcal{F}_{X}^* := \{ u \in \mathcal{F}_{X} \; | u \text{ is normal, usc and compactly supported}\} \; $$

\begin{proposition}
 If $u \in \mathcal{F}_{X}^*$ then for every $\alpha\in[0,1]$, the $\alpha$-cut set of $u$ is nonempty and compact.
\end{proposition}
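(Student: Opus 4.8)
The plan is to argue by cases according to whether $\alpha=0$ or $\alpha\in(0,1]$, exploiting each of the three defining properties of $\mathcal{F}_X^*$ — normality, upper semicontinuity, compact support — for a different part of the claim.

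First I would dispose of nonemptiness uniformly: since $u$ is normal, there is some $x_0\in X$ with $u(x_0)=1\geq\alpha$ for every $\alpha\in[0,1]$, hence $x_0\in[u]^\alpha$. Note that for $\alpha=0$ this also follows directly, since $x_0\in\{x\in X:u(x)>0\}\subseteq[u]^0$.

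Next, for the compactness, the case $\alpha=0$ is immediate: $[u]^0$ is compact precisely because $u$ is compactly supported, which is part of the definition of $\mathcal{F}_X^*$. For $\alpha\in(0,1]$ I would observe two things. On one hand, $[u]^\alpha=u^{-1}([\alpha,+\infty])$, which is closed because $u$ is usc (this is the definition of upper semicontinuity applied with $c=\alpha$, using that $u$ takes values in $[0,1]\subset\mathbb{R}$). On the other hand, $[u]^\alpha\subseteq[u]^0$: indeed, if $u(x)\geq\alpha>0$ then $u(x)>0$, so $x\in\{x\in X:u(x)>0\}\subseteq[u]^0$. Thus $[u]^\alpha$ is a closed subset of the compact set $[u]^0$, and a closed subset of a compact (Hausdorff, or just topological) space is compact.

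I do not expect any genuine obstacle here; the statement is a direct unpacking of the definitions. The only points that require a moment's care are the containment $[u]^\alpha\subseteq[u]^0$ for positive $\alpha$ (so that the usc-closedness can be combined with the compactness of the support) and the standard fact that a closed subset of a compact set is compact.
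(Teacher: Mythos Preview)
Your proof is correct and follows essentially the same route as the paper's: nonemptiness from normality, compactness of $[u]^0$ from the definition of compactly supported, and for $\alpha\in(0,1]$ closedness from upper semicontinuity combined with the inclusion $[u]^\alpha\subseteq[u]^0$. You are slightly more explicit than the paper about the nonemptiness of $[u]^0$ and the containment $[u]^\alpha\subseteq[u]^0$, but there is no substantive difference.
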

\begin{proof}
The set $[u]^0$ is compact since $u$ is compactly supported. Now
  let $\alpha \in (0,1]$. Then we have $[u]^{\alpha} \neq \varnothing$ because $u$ is normal. Also, $[u]^\alpha$ is closed because $u$ is usc. Hence it is a closed subset of a compact set $[u]^0$, so also compact.
\end{proof}

The topology on $\mathcal{F}_{X}^*$ is defined by the Hausdorff distance between the $\alpha$-cuts.
We recall that in the set $\mathcal{K}^*(X)$ of nonempty and compact crisp subsets of $X$, the Hausdorff distance is defined by
$$h(A,B):= \max\left( \sup_{x \in A}\inf_{y \in B}d(x,y), \sup_{y \in B}\inf_{x \in A }d(x,y)\right).$$

Equivalently, if we define $A_{\varepsilon}:=\{ x \in X\; | \; d(x, A)\leq \varepsilon\}$, where $d(x,A):=\inf_{y\in A}d(x,y)$, then we get
$$h(A,B)= \inf\{\varepsilon >0\; | \;A \subseteq B_{\varepsilon},  B \subseteq A_{\varepsilon}\}.$$

Since $\mathcal{K}^*(X)$ contains all the $\alpha$-cuts, we can define a distance $d_\infty$ in $\mathcal{F}_{X}^*$ by
$$d_\infty (u,v) := \sup_{\alpha \in [0,1]} h([u]^{\alpha},[v]^{\alpha}),$$
for $u, v \in \mathcal{F}_{X}^*$. It is known that $d_\infty$ is a metric (see Diamond and Kloeden~\cite{MR1337027}), which is complete provided $X$ is compact (see a.e., Cabrelli et al.~\cite{MR1192494}). We will extend this result a bit (probably it is known, but we did not find a reference).

\begin{theorem} \label{Fuzzy Space is Complete} The function  $d_\infty : \mathcal{F}_{X}^* \times \mathcal{F}_{X}^* \to \mathbb{R}$ is a metric and $(\mathcal{F}_{X}^* , d_\infty)$ is a complete metric space provided $(X,d)$ is complete.
\end{theorem}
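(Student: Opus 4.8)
The plan is to show that $d_\infty$ inherits completeness from the completeness of the spaces $(\mathcal{K}^*(X),h)$, which is the classical fact that the hyperspace of nonempty compact subsets of a complete metric space is complete. That $d_\infty$ is a metric (the triangle inequality follows pointwise in $\alpha$ from the triangle inequality for $h$, and $d_\infty(u,v)=0$ forces all $\alpha$-cuts to coincide, hence $u=v$ since a usc function is determined by its superlevel sets) is routine, so the real content is completeness.

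First I would take a $d_\infty$-Cauchy sequence $(u_n)$ in $\mathcal{F}_X^*$. For each fixed $\alpha\in[0,1]$ the sequence of $\alpha$-cuts $([u_n]^\alpha)$ is Cauchy in $(\mathcal{K}^*(X),h)$ uniformly in $\alpha$, so it converges to some nonempty compact set $K_\alpha$, and the convergence $\sup_\alpha h([u_n]^\alpha,K_\alpha)\to 0$ is uniform in $\alpha$. The candidate limit should be the fuzzy set defined by $u(x):=\sup\{\alpha\in[0,1]: x\in K_\alpha\}$ (with $\sup\varnothing=0$); equivalently one wants $[u]^\alpha=K_\alpha$ for $\alpha\in(0,1]$. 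The key structural steps are then: (1) the family $(K_\alpha)$ is nested decreasing in $\alpha$ and is "left-continuous" in the sense that $K_\alpha=\bigcap_{\beta<\alpha}K_\beta$ — this should follow from the analogous property of the cuts $[u_n]^\alpha$ together with the uniform Hausdorff convergence; (2) consequently the superlevel sets of $u$ are exactly $[u]^\alpha=K_\alpha$ for $\alpha>0$, which gives that $u$ is usc (closed superlevel sets), normal (since $K_1\neq\varnothing$, pick $x\in K_1$, then $u(x)=1$), and compactly supported (need $[u]^0$ compact). (3) Finally, $d_\infty(u_n,u)=\sup_\alpha h([u_n]^\alpha,K_\alpha)\to 0$ by construction, so $u_n\to u$ in $\mathcal{F}_X^*$.

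The main obstacle I anticipate is handling $\alpha=0$ and the compact support condition: one must verify that $[u]^0:=\overline{\{x:u(x)>0\}}$ coincides with $K_0$ and is compact. Since $X$ need not be compact, this does not come for free; the argument should use that $[u_n]^0\to K_0$ in $h$, that $\{x:u(x)>0\}=\bigcup_{\alpha>0}K_\alpha$ is contained in $K_0$, and conversely that every point of $K_0$ is a limit of points in the $K_\alpha$'s (using left-continuity at $0$ or Remark \ref{grey level'}), so that $\overline{\{x:u(x)>0\}}=K_0$, which is compact as a limit in $\mathcal{K}^*(X)$. A secondary subtlety is the left-continuity claim (1): one shows $x\in\bigcap_{\beta<\alpha}K_\beta$ implies $x\in K_\alpha$ by, for each $\beta<\alpha$, approximating $x$ by points $x_n\in[u_n]^\beta$, then using uniform convergence and usc of each $u_n$ to pass to a point of $[u_n]^{\beta'}$ for $\beta'$ slightly less, and finally diagonalizing; alternatively one can cite that this "left-continuity of cuts" characterization of membership in $\mathcal{F}_X^*$ is standard (Diamond–Kloeden \cite{MR1337027}). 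Everything else — the metric axioms, the uniform extraction of limits $K_\alpha$, and the concluding convergence — is bookkeeping.
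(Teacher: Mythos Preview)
Your proposal is correct, but the paper takes a markedly shorter route. Rather than constructing the limit $u$ from scratch via the level-set family $(K_\alpha)$ and then verifying the ``left-continuity'' condition $K_\alpha=\bigcap_{\beta<\alpha}K_\beta$ (which, as you rightly flag, is the genuine technical point), the paper \emph{reduces to the compact case}: from the Cauchy condition one knows $([u_n]^0)$ is Cauchy in $\mathcal{K}^*(X)$, hence convergent, and therefore $X':=\overline{\bigcup_n [u_n]^0}$ is compact. Restricting each $u_n$ to $X'$ gives a Cauchy sequence in $\mathcal{F}_{X'}^*$, and now one simply invokes the already-known completeness of $(\mathcal{F}_{X'}^*,d_\infty)$ for compact $X'$ (Cabrelli et al.~\cite{MR1192494}); the limit $u'$ is then extended by zero to all of $X$.

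What each approach buys: the paper's argument is essentially two lines once the compact case is available, and it isolates exactly where completeness of $X$ is used (namely, to make $\mathcal{K}^*(X)$ complete so that $X'$ exists and is compact). Your direct construction is self-contained---it does not black-box the compact case---and is essentially the standard Diamond--Kloeden argument, but it requires carrying out the level-set verification you outline, including the $\alpha=0$ support issue. Both are valid; the paper's is the more economical choice given that \cite{MR1192494} is already being cited.
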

\begin{proof}
Let $(u_n)\subset \mathcal{F}_X^*$ be a Cauchy sequence. By definition, this means in particular that the sequence $([u_n]^0)$ is Cauchy in $\mathcal{K}(X)$, so, by completeness of $\mathcal{K}^*(X)$, it is convergent. This implies that $X':=\overline{\bigcup_{n\in\N}[u_n]^0}$ (equal to the union of all $[u_n]^0$ and the limit) is compact. Now for every $n\in\N$, let $u_n':=u_n\vert X'$ be the restriction of $u_n$ to $X'$. It is easy to see that $(u_n')$ is a Cauchy sequence in $\mathcal{F}_{X'}^*$. Since $X'$ is compact, $\mathcal{F}^*_{X'}$ is complete, so $u_n'\to u'$ for some $u'\in \mathcal{F}_{X'}^*$(by mentioned result from \cite{MR1192494}). Then $u_n\to u$ in $\mathcal{F}_X^*$, where $u$ is the natural extension of $u'$ to the whole $X$.
\end{proof}

Now we show that the definition of $d_\infty$ can be simplified a bit. We will use the following technical result from \cite{MR1192494} (Lemma A.1.)
\begin{lemma} \label{convergence of compact subsets}
If $(A_n)$ is a sequence of sets in $\mathcal{K}^*(X)$ such that ${A}_{n} \subseteq {A}_{n+1}$ for all $n\geq0$ and $A:=\overline{\bigcup_{n\geq 0}{A}_n}\in \mathcal{K}(X)$, then ${A}_{n} \to {A}$ with respect to the Hausdorff distance, that is, $h({A}_{n},{A}) \to 0$.
\end{lemma}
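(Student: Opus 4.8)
The plan is to verify directly that $h(A_n, A) \to 0$, where $A = \overline{\bigcup_{n \geq 0} A_n}$ is assumed compact. Fix $\varepsilon > 0$; I want to produce $N$ such that for all $n \geq N$ we have $A \subseteq (A_n)_\varepsilon$ and $A_n \subseteq A_\varepsilon$. The second inclusion is immediate and uniform in $n$: since $A_n \subseteq A$, we trivially have $A_n \subseteq A \subseteq A_\varepsilon$. So the entire content is the first inclusion, i.e., showing that eventually every point of $A$ is within $\varepsilon$ of $A_n$.

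For the first inclusion I would argue by compactness of $A$. The family $\{ x \in X : d(x, A_n) < \varepsilon \}$, $n \geq 0$, is an increasing sequence of open sets (increasing because $A_n \subseteq A_{n+1}$ forces $d(\cdot, A_{n+1}) \leq d(\cdot, A_n)$). I claim their union covers $A$: given $x \in A = \overline{\bigcup_k A_k}$, there is a point $y \in \bigcup_k A_k$ with $d(x,y) < \varepsilon$, say $y \in A_m$; then $d(x, A_m) < \varepsilon$, so $x$ lies in the $m$-th open set. Thus $\{ \{x : d(x,A_n) < \varepsilon\} \}_{n \geq 0}$ is an open cover of the compact set $A$, hence admits a finite subcover; since the sets are increasing, a single index $N$ already covers $A$. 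Then for every $n \geq N$, monotonicity gives $A \subseteq \{ x : d(x, A_n) < \varepsilon \} \subseteq (A_n)_\varepsilon$, which is what we needed. Combining the two inclusions, $h(A_n, A) \leq \varepsilon$ for $n \geq N$.

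The only mild subtlety — and the step I would flag as the place to be careful — is the passage from "finite subcover" to "single index suffices," which relies on the hypothesis $A_n \subseteq A_{n+1}$; without monotonicity the statement is false. One should also note that $h(A_n, A)$ is well-defined throughout, since each $A_n$ and $A$ lie in $\mathcal{K}^*(X)$ by hypothesis (in particular nonempty, so $d(x, A_n)$ makes sense). Everything else is routine metric-space bookkeeping, so I would present it in a few lines.
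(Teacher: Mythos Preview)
Your argument is correct and is the standard proof of this fact: the trivial inclusion $A_n \subseteq A \subseteq A_\varepsilon$ handles one half of the Hausdorff estimate, and the open cover $\{x : d(x,A_n) < \varepsilon\}_{n\geq 0}$ of the compact set $A$, reduced to a single index by monotonicity, handles the other. The flagged subtlety (monotonicity is what collapses the finite subcover to one set) is exactly the right place to pause.

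There is nothing to compare against in the paper itself, however: the paper does not prove this lemma but simply quotes it as Lemma~A.1 of Cabrelli, Forte, Molter and Vrscay~\cite{MR1192494}. Your write-up supplies precisely the short self-contained proof the paper omits, and it is the argument one would expect to find in the cited reference.
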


\begin{corollary}\label{f2}For any $u\in\mathcal{F}^*_X$ and a decreasing sequence $(\alpha_n)\subset (0,1]$ with $\alpha_n\to0$, we have $\displaystyle [u]^0=\lim_{n\to\infty}[u]^{\alpha_n}$ in the Hausdorff distance. In particular, $$d_\infty(u,v)=\sup_{\alpha\in(0,1]}h([u]^\alpha,[v]^\alpha)$$
\end{corollary}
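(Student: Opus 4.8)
The plan is to deduce the first claim directly from Lemma~\ref{convergence of compact subsets}, applied to the sequence $A_n:=[u]^{\alpha_n}$. I first need to verify the three hypotheses. Each $[u]^{\alpha_n}$ is nonempty and compact by the Proposition on $\alpha$-cuts (since $u\in\mathcal F_X^*$ and $\alpha_n\in(0,1]$). Monotonicity $[u]^{\alpha_n}\subseteq[u]^{\alpha_{n+1}}$ follows because $\alpha_n\geq\alpha_{n+1}$ forces $\{x:u(x)\geq\alpha_n\}\subseteq\{x:u(x)\geq\alpha_{n+1}\}$. Finally, $\overline{\bigcup_{n}[u]^{\alpha_n}}=[u]^0$: this is exactly Remark~\ref{grey level'}, and in any case $[u]^0$ is compact since $u$ is compactly supported. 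Lemma~\ref{convergence of compact subsets} then yields $h([u]^{\alpha_n},[u]^0)\to0$, i.e.\ $[u]^0=\lim_n[u]^{\alpha_n}$.

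For the ``in particular'' part, I would show that dropping $\alpha=0$ from the supremum changes nothing, i.e.\ $h([u]^0,[v]^0)\leq\sup_{\alpha\in(0,1]}h([u]^\alpha,[v]^\alpha)=:M$. Fix any decreasing sequence $(\alpha_n)\subset(0,1]$ with $\alpha_n\to0$. By the first part, $[u]^{\alpha_n}\to[u]^0$ and $[v]^{\alpha_n}\to[v]^0$ in $(\mathcal K^*(X),h)$. Since $h$ is a metric on $\mathcal K^*(X)$, it is (jointly) continuous, so $h([u]^{\alpha_n},[v]^{\alpha_n})\to h([u]^0,[v]^0)$. As $h([u]^{\alpha_n},[v]^{\alpha_n})\leq M$ for every $n$, passing to the limit gives $h([u]^0,[v]^0)\leq M$. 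Combined with the trivial inequality $M\leq\sup_{\alpha\in[0,1]}h([u]^\alpha,[v]^\alpha)$, this proves $d_\infty(u,v)=\sup_{\alpha\in(0,1]}h([u]^\alpha,[v]^\alpha)$.

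I do not expect a serious obstacle here; the statement is essentially a packaging of Remark~\ref{grey level'} together with Lemma~\ref{convergence of compact subsets} and the continuity of the Hausdorff metric. The only point requiring a little care is the joint continuity of $h$ used in the second paragraph (which is immediate from the triangle inequality, $|h(A,B)-h(A',B')|\leq h(A,A')+h(B,B')$), and making sure all the $\alpha$-cuts invoked lie in $\mathcal K^*(X)$ so that $h$ is defined on them — both of which are guaranteed by $u,v\in\mathcal F_X^*$.
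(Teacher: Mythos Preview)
Your proposal is correct and follows essentially the same approach as the paper: the paper also deduces the first claim from Lemma~\ref{convergence of compact subsets} together with Remark~\ref{grey level'}, and for the second claim uses the continuity of $h$ along the sequence $\alpha_n=1/n$ to pass to the limit. Your write-up simply spells out the verification of the hypotheses and the continuity of $h$ more explicitly than the paper does.
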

\begin{proof}
The first part follows from Lemma~\ref{convergence of compact subsets} and Remark \ref{grey level'}. The second follows from the first one since
$$
h([u]^0,[v]^0)=h(\lim_{n\to\infty}[u]^{1/n},\lim_{n\to\infty}[v]^{1/n})=\lim_{n\to\infty}h([u]^{1/n},[v]^{1/n}).
$$
\end{proof}

\begin{definition}\emph{
A  \emph{grey level map} is a nonzero function $\rho: [0,1]\to [0,1]$. We said that a grey level map satisfy} ndrc condition \emph{ or is} an ndrc map\emph{, if\\
a) $\rho$ is nondecreasing;\\
b) $\rho$ is right continuous.}
\end{definition}


\begin{proposition} \label{alpha cut compose with ndrc rho}
Assume that $\rho$ is an ndrc map and $u \in \mathcal{F}_{X}$ is usc.\\
a) The map $\beta:[0,\rho(1)] \to [0,1]$, given by
$$\beta(\alpha):=\inf \{ t \; | \; \rho(t) \geq \alpha\}$$
is  well defined, nondecreasing and $\rho(\beta(\alpha))\geq \alpha$.\\
b) If $\alpha\in(0,1]$, then
$$
[\rho(u)]^\alpha=\left\{\begin{array}{ccc}
X & \mathrm{if} & \alpha\leq\rho(1) \;\mathrm{and}\; \beta(\alpha)=0 \\
{[}u{]}^{\beta(\alpha)}  & \mathrm{if} & \alpha\leq\rho(1)\;\mathrm{and}\;\beta(\alpha)>0 \\
\emptyset & \mathrm{if} &\alpha>\rho(1)
\end{array}\right.
$$
c) If $r_+:=\inf\{t:\rho(t)>0\}$, then
$$
[\rho(u)]^0=\left\{\begin{array}{ccc}
X & \mathrm{if} & \rho(0)>0\\
\overline{\bigcup_{\alpha>r_+}[u]^{\alpha}}  & \mathrm{if} & \rho(0)=0\;\mathrm{and}\; \rho(r_+)=0 \\
{[}u{]}^{r_+} & \mathrm{if} & \rho(0)=0\;\mathrm{and}\rho(r_+)>0
\end{array}\right.
$$
\end{proposition}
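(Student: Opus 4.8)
The plan is to analyze the three parts by directly unwinding the definitions of the $\alpha$-cut and of $\rho(u)$, using the ndrc hypothesis on $\rho$ at each step. For part (a), I would first check that $\beta(\alpha)=\inf\{t:\rho(t)\geq\alpha\}$ is well defined: since $\alpha\leq\rho(1)$, the set $\{t:\rho(t)\geq\alpha\}$ is nonempty (it contains $1$), so the infimum exists in $[0,1]$. Monotonicity of $\beta$ is immediate because enlarging $\alpha$ shrinks the set over which we take the infimum. The inequality $\rho(\beta(\alpha))\geq\alpha$ is exactly where right-continuity of $\rho$ enters: take a decreasing sequence $t_n\searrow\beta(\alpha)$ with $\rho(t_n)\geq\alpha$; by right-continuity $\rho(\beta(\alpha))=\lim_n\rho(t_n)\geq\alpha$. (If $\beta(\alpha)$ itself belongs to the set, there is nothing to prove; the sequence argument covers the boundary case.)

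For part (b), fix $\alpha\in(0,1]$. The case $\alpha>\rho(1)$ is easiest: since $\rho$ is nondecreasing, $\rho(u(x))\leq\rho(1)<\alpha$ for every $x$, so $[\rho(u)]^\alpha=\emptyset$. When $\alpha\leq\rho(1)$, I claim $x\in[\rho(u)]^\alpha\iff \rho(u(x))\geq\alpha\iff u(x)\geq\beta(\alpha)$; the key equivalence $\rho(t)\geq\alpha\iff t\geq\beta(\alpha)$ follows from monotonicity of $\rho$ together with part (a): if $t\geq\beta(\alpha)$ then $\rho(t)\geq\rho(\beta(\alpha))\geq\alpha$, and conversely if $\rho(t)\geq\alpha$ then $t\in\{s:\rho(s)\geq\alpha\}$ so $t\geq\beta(\alpha)$. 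Thus $[\rho(u)]^\alpha=\{x:u(x)\geq\beta(\alpha)\}$, which is $X$ when $\beta(\alpha)=0$ (since every $u(x)\geq0$) and is exactly $[u]^{\beta(\alpha)}$ when $\beta(\alpha)>0$, splitting the two subcases.

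For part (c), recall $[\rho(u)]^0=\overline{\{x:\rho(u(x))>0\}}$. If $\rho(0)>0$, then monotonicity gives $\rho(u(x))\geq\rho(0)>0$ for all $x$, so $[\rho(u)]^0=X$. If $\rho(0)=0$, observe $\rho(u(x))>0\iff u(x)>r_+$ or ($u(x)=r_+$ and $\rho(r_+)>0$); here I would again use monotonicity plus the definition of $r_+=\inf\{t:\rho(t)>0\}$, noting that $\rho(t)>0$ for $t>r_+$ and $\rho(t)=0$ for $t<r_+$. When $\rho(r_+)=0$, the positivity set is $\{x:u(x)>r_+\}=\bigcup_{\alpha>r_+}[u]^\alpha$, whose closure is the stated set. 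When $\rho(r_+)>0$, the positivity set is $\{x:u(x)\geq r_+\}$; since $u$ is usc this set is already closed, so its closure is $[u]^{r_+}$ (one should note $r_+\in(0,1]$ here, which holds because $\rho(0)=0$ forces $r_+>0$ and $\rho$ nonzero forces $r_+\leq 1$, so the $\alpha$-cut notation is legitimate).

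The main obstacle is bookkeeping rather than depth: the various degenerate cases (whether infima are attained, whether $\beta(\alpha)$ or $r_+$ equals $0$) must be handled carefully, and right-continuity of $\rho$ must be invoked precisely at the two places where a limit from the right is needed — establishing $\rho(\beta(\alpha))\geq\alpha$ in (a) and, implicitly, controlling the behavior of $\rho$ just above the critical values. Monotonicity does most of the remaining work. I do not anticipate needing any topological input beyond the fact that $u$ usc makes superlevel sets $[u]^{\beta}$ closed, which is only used in the last subcase of (c).
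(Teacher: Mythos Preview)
Your proposal is correct. Parts (a) and (b) match the paper's argument essentially word for word: the key equivalence $\rho(t)\geq\alpha\iff t\geq\beta(\alpha)$ that you isolate is exactly what the paper establishes via its double inclusion, and right-continuity is invoked at the same place.

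For part (c) you take a slightly more direct route than the paper. The paper reduces to part (b) by writing $[\rho(u)]^0=\overline{\bigcup_n[\rho(u)]^{\alpha_n}}=\overline{\bigcup_n[u]^{\beta(\alpha_n)}}$ for a sequence $\alpha_n\searrow 0$, and then analyzes the limiting behaviour of $\beta(\alpha_n)$ relative to $r_+$ in each subcase. You instead characterize the positivity set $\{x:\rho(u(x))>0\}$ directly in terms of $u$ and $r_+$, and close it up. Your approach is marginally cleaner since it avoids the sequence machinery and the auxiliary limit argument for $\beta(\alpha_n)\to r_+$; the paper's approach has the virtue of reusing (b) so that (c) is formally a corollary. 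Both arrive at the same place with the same ingredients (monotonicity for the trichotomy at $r_+$, upper semicontinuity of $u$ only in the last subcase to ensure $[u]^{r_+}$ is already closed).
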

\begin{proof}
a) We know that if $a \in \{ t \; | \; \rho(t) \geq \alpha\}$ then $[a,1] \subset  \{ t \; | \; \rho(t) \geq \alpha\}$ because $\rho$ is nondecreasing. So there is an unique $\beta=\inf \{ t \; | \; \rho(t) \geq \alpha\}$, in particular $\beta \in \{ t \; | \; \rho(t) \geq \alpha\}$ because $\rho$ is right continuous. Take $\delta>0$ such that $\alpha<  \alpha+\delta \leq \rho(1)$ then $$\{ t \; | \; \rho(t) \geq \alpha+\delta\} \subset  \{ t \; | \; \rho(t) \geq \alpha\}$$ thus $\inf \{ t \; | \; \rho(t) \geq \alpha+\delta\} \geq  \inf  \{ t \; | \; \rho(t) \geq \alpha\}$ or $\beta(\alpha+\delta) \geq \beta(\alpha)$.

b) If $0<\alpha \leq \rho(1)$ and $\beta(\alpha)>0$, then to show that  $[\rho(u)]^{\alpha}= [u]^{\beta(\alpha)}$ we take $ x \in [\rho(u)]^{\alpha}$. Then  $\rho(u (x)) \geq \alpha$ that is $u (x) \in \{t\; | \; \rho(t) \geq \alpha \}$ thus $u (x) \geq \beta(\alpha)$. So $x \in [u]^{\beta(\alpha)}$.  Reciprocally, if $x \in [u]^{\beta(\alpha)}$ we get $u (x) \geq \beta(\alpha)$ and applying $\rho$ we get $\rho(u (x)) \geq \rho(\beta(\alpha)) \geq \alpha$ thus $ x \in [\rho(u)]^{\alpha}$.\\
If $\beta(\alpha)=0$, then by the right continuity of $\rho$ we have that $\rho(t)\geq\alpha$ for all $t\geq 0$, so for every $x\in X$, $\rho(u(x))\geq \alpha$.\\
Finally, since for every $x\in X$, $\rho(u(x))\leq\rho(1)$, we get that $[\rho(u)]^\alpha=\emptyset$ for $\alpha>\rho(1)$.

c) If $\rho(0)>0$, then $\beta(\alpha)=0$ for some $\alpha>0$, so by b) we have that $X=[\rho(u)]^\alpha\subset [\rho(u)]^0\subset X$.\\
Now assume that $\rho(0)=0$. This means that $\beta(\alpha)>0$ for all $\alpha\in(0,\rho(1)]$ and hence by b) we have that
$$
[\rho(u)]^0=\overline{\bigcup_{n=1}^\infty[\rho(u)]^{\alpha_n}}=\overline{\bigcup_{n=1}^\infty[u]^{\beta(\alpha_n)}}.
$$
where $(\alpha_n)\subset (0,\rho(1)]$ is such that $\alpha_n\searrow 0$.
If $\rho(r_+)>0$, then $\beta(\alpha_n)=r_+$ for sufficiently large $n$, so $[\rho(u)]^0=[u]^{r_+}$ in this case.\\
If $\rho(r_+)=0$, then
\begin{equation}\label{f1}\forall \, n\in\N, \, \;\beta(\alpha_n)>r_+\mbox{ and }\beta(\alpha_n)\to r_+.
\end{equation} By definition, $\beta(\alpha_n)\geq r_+$ for all $n\in\N$. If $\beta(\alpha_n)=r_+$ for some $n$, then by a) we have a contradiction $\rho(r_+)=\rho(\beta(\alpha_n))\geq \alpha_n>0$. On the other hand, assume that $\beta(\alpha_n)$ does not converge to $r_+$. Then for some $r'>r_+$, we have that $\beta(\alpha_n)>r'$ for all $n\in\N$ (because $(\beta(\alpha_n))$ is nonincreasing), which implies that $\{t:\rho(t)>0\}\subset [r',\infty)$. This contradicts the definition of $r_+$. Hence we get (\ref{f1}). This condition together with the fact that $s>t\Rightarrow [u]^t\subset[u]^s$ gives the thesis of (c) in this case.
\end{proof}

We notice that, in a metric space $(X,d)$ we have $u=\bchi_{K}  \in \mathcal{F}_{X}^*$ if and only if $K$ is a compact and nonempty subset of $X$. In that case, $[\rho(u)]^{0}=K$. Also $[1]^{0}=X$.

\begin{proposition}\label{equivalence ndrc}
If $\rho:[0,1]\to[0,1]$ is ndrc, then for every $u\in\mathcal{F}^*_X$, the fuzzy set $\rho(u)$ is usc.
\end{proposition}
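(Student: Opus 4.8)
The plan is to verify directly that for every real threshold $c$ the superlevel set $\{x\in X:\rho(u(x))\geq c\}$ is closed, which is exactly the definition of upper semicontinuity of $\rho(u)$. Since $\rho$ takes values in $[0,1]$, only the thresholds $c\in(0,1]$ require genuine work: for $c\leq 0$ one has $\{x:\rho(u(x))\geq c\}=X$ (because $\rho\geq 0$ everywhere), and for $c>1$ one has $\{x:\rho(u(x))\geq c\}=\varnothing$ (because $\rho\leq 1$), and both $X$ and $\varnothing$ are closed.

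For $c\in(0,1]$ I would simply invoke Proposition~\ref{alpha cut compose with ndrc rho}(b) with $\alpha=c$. That result already computes $[\rho(u)]^{c}$ explicitly and shows it equals $X$ (when $c\leq\rho(1)$ and $\beta(c)=0$), or $[u]^{\beta(c)}$ with $\beta(c)>0$ (when $c\leq\rho(1)$ and $\beta(c)>0$), or $\varnothing$ (when $c>\rho(1)$). The first and third sets are trivially closed; the middle one is closed precisely because $u$ is usc, so every superlevel set $[u]^{t}$ with $t>0$ is closed. Hence $[\rho(u)]^{c}$ is closed in each of the three cases, and combining this with the two trivial ranges of $c$ above we conclude that $\rho(u)$ is usc.

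The one point to be careful about is that the fuzzy-set $0$-cut $[u]^{0}$ (the closure of the support) is not the honest superlevel set $\{x:u(x)\geq 0\}$; upper semicontinuity only concerns the genuine superlevel sets, and for the threshold $0$ that set is all of $X$, so the usc hypothesis is never used there. With this distinction kept in mind there is no real obstacle: the statement is essentially a corollary of the $\alpha$-cut formula established in Proposition~\ref{alpha cut compose with ndrc rho}, the upper semicontinuity of $u$ entering only to guarantee closedness of $[u]^{\beta(c)}$, and the ndrc hypothesis on $\rho$ being already absorbed into that proposition.
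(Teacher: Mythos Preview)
Your proof is correct and follows essentially the same route as the paper, which simply states that the assertion follows directly from Proposition~\ref{alpha cut compose with ndrc rho}(b) and (c). In fact your argument is slightly sharper: you correctly observe that only part (b) is needed, since upper semicontinuity concerns the genuine superlevel sets $\{x:\rho(u(x))\geq c\}$ and for $c\leq 0$ this set is all of $X$, so the $0$-cut formula in part (c) plays no role here.
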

\begin{proof}
The assertion follows directly from Proposition \ref{alpha cut compose with ndrc rho} b) and c).
\end{proof}

\begin{definition}
\emph{An iterated function system (IFS) is a finite family $\mathcal{R}$ of continuous functions $\phi_j:X\to X$, denoted by $\mathcal{R}=(X,(\phi_j)_{j=0,...,n-1})$. If additionally the mappings $\phi_j$ satisfy
$$
d(\phi_j(a),\phi_j(b))\leq \; \lambda_j \; d(a,b), \; j=0...n-1,
$$
for some constants $0\leq \lambda_j<1$, $j=0,...,n-1$, then we call it Lipschitz contractive IFS.\\
The operator $\mathcal{R}:\mathcal{K}^*(X)\to\mathcal{K}^*(X)$ defined by $$\displaystyle \mathcal{R}(B):= \bigcup_{j=0...n-1} \phi_j(B)$$
is called }the Hutchinson-Barnsley (HB) operator associated to $\mathcal{R}$.
\end{definition}

\begin{definition}\label{admissible gey level maps}  \emph{A
 system of grey level maps $(\rho_j)_{j=0...n-1}: [0,1]\to [0,1]$ is }admissible\emph{ if it satisfies all the conditions\\
a) $\rho_j$ is nondecreasing;\\
b) $\rho_j$ is right continuous;\\
c) $\rho_j(0)=0$;\\
d) $\rho_j(1)=1$ for some $j$.}
\end{definition}

The items a) and  b) mean that each grey level map is ndrc. Item  c) means that black pixels should stay black and item d) means that the combination of the grey scales cannot decrease the brightness, { when we represent fuzzy sets as grey scale images.}\\
\begin{figure}[h!]
  \centering
  \includegraphics[width=3.5cm]{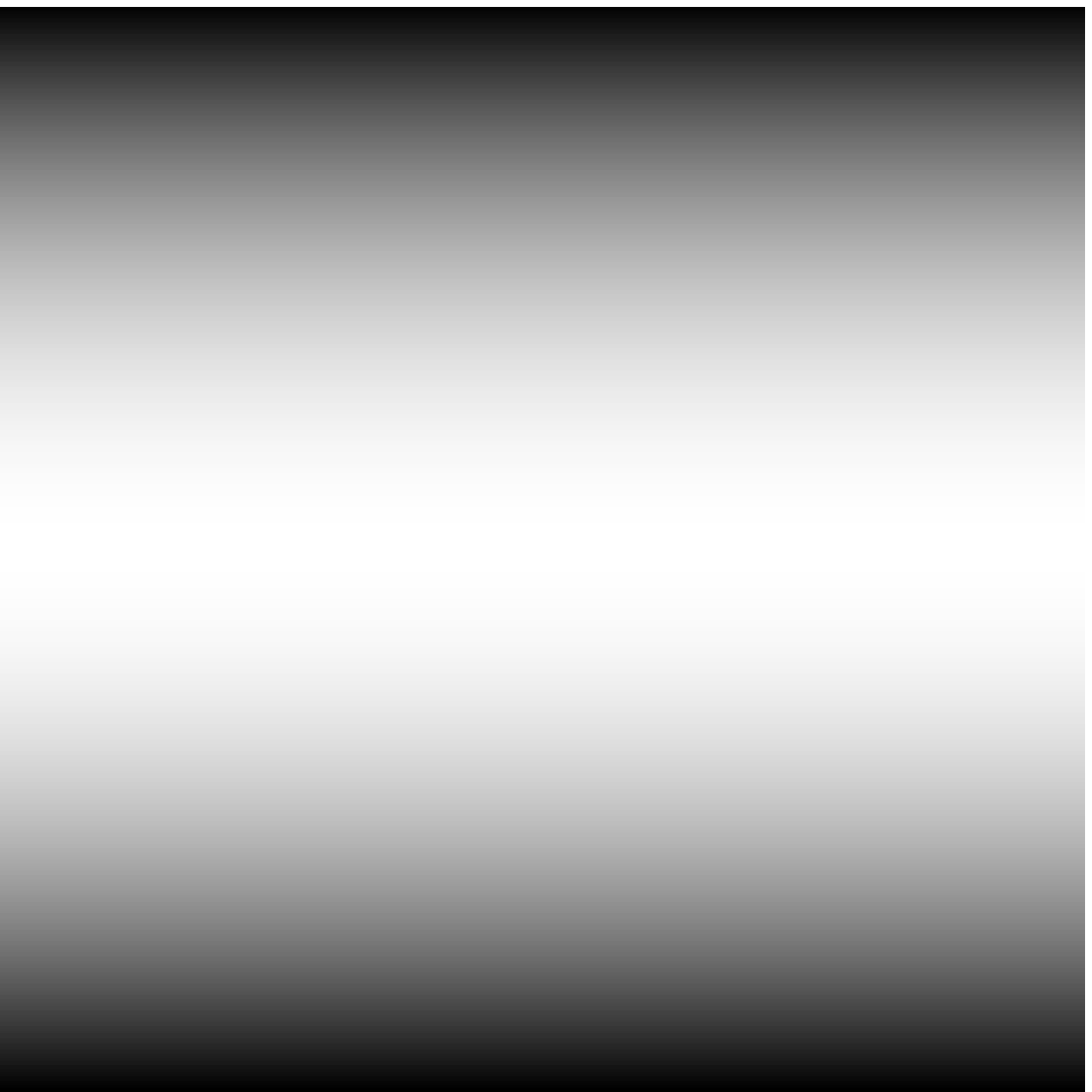}
  \includegraphics[width=5.0cm]{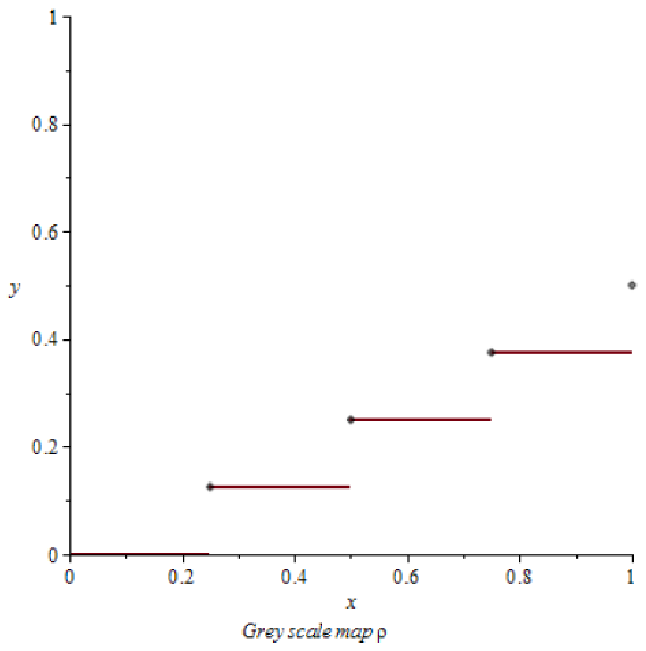}
  \includegraphics[width=3.5cm]{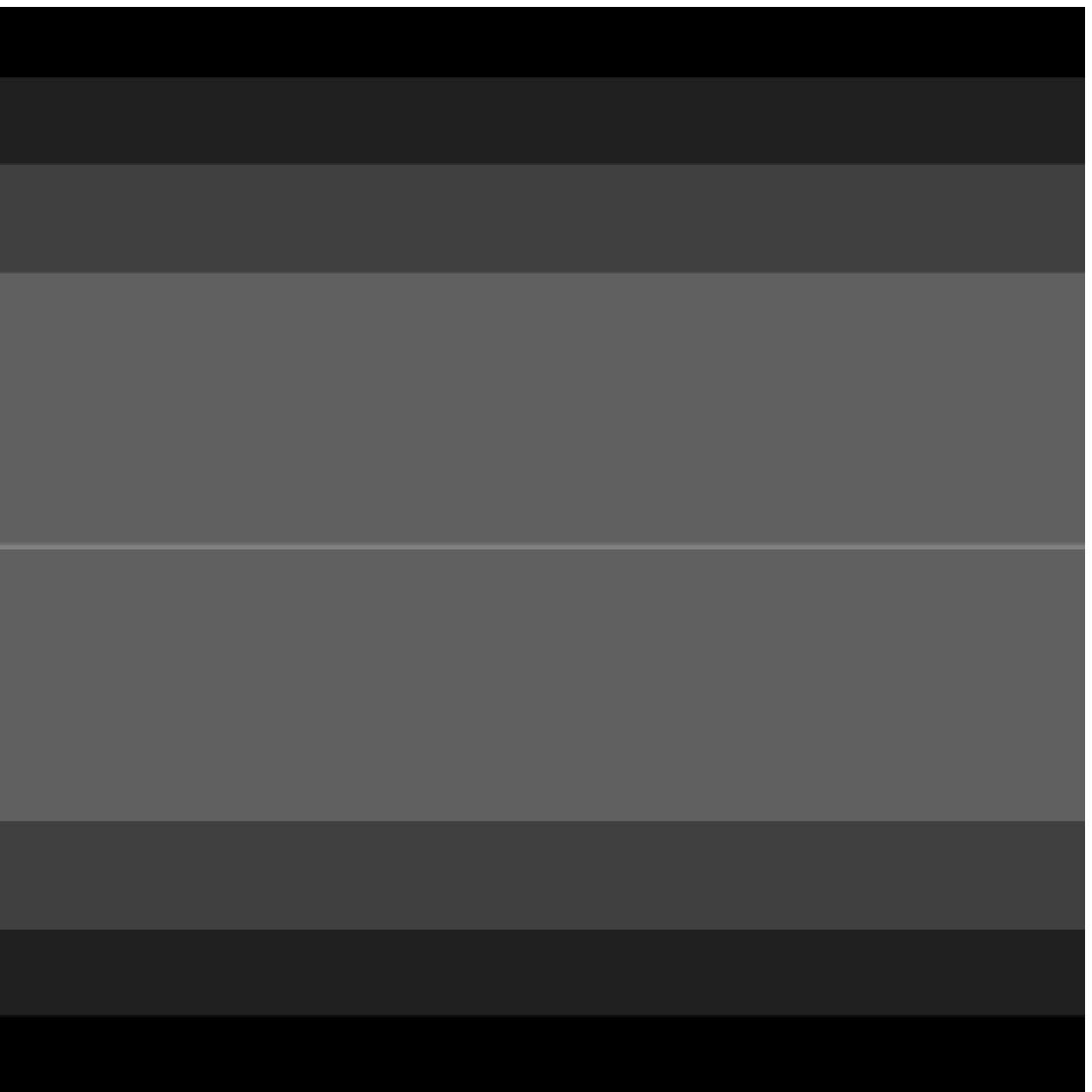}
  \caption{On the left we have the fuzzy set $u(x,y)=1- 4(y- \frac{1}{2})^2$ in $X=[0,1]^2$ as a grey scale figure and on the right $\rho(u)$, where $\rho(t)=1/8 (4t - frac(4t))$ is drawn in the middle.}
  \label{example fuzzy set compose}
\end{figure}

The fuzzification of an IFS is to consider the parallel action of the Hutchinson-Barnsley operator on the fuzzy subsets of $X$.

\begin{definition}\label{ifzs definition} \emph{ Let $\mathcal{R}=(X, (\phi_j)_{j=0...n-1})$ be an IFS and $(\rho_j)_{j=0...n-1}$ be an admissible
system of grey level maps. Then the system $\mathcal{Z_R}:=(X, (\phi_j)_{j=0...n-1}, (\rho_j)_{j=0...n-1})$ is called} an iterated fuzzy function system\emph{ (IFZS in short). Inspired by the (HB) operator, we define }the Fuzzy Hutchinson-Barnsley (FHB) operator associated to $\mathcal{Z_R}$\emph{ by
$$\mathcal{Z_R}(u):= \bigvee_{j=0...n-1} \rho_j(\phi_j(u))$$
for all $u\in\mathcal{F}^*_X$.}
\end{definition}

\begin{example} To see the action of the FHB operator we consider the IFS $\mathcal{R}=([0,1]^2, (\phi_j)_{j=0,1})$, where $\phi_0(x,y)=(x/2, y/2)$ and $\phi_1(x,y)=(x/2, y/2 + 1/2)$, with an admissible
set of grey level maps $\rho_0 (t)= 1/3 (3t - frac (3t))$, $\rho_1 (t)= 0$ if $t<1/2$ and $\rho_1 (t)= t$ if $t \geq 1/2$. In the Figure~\ref{example fhb operator graph} we drawn $\mathcal{Z_R}(u)$ for a representation of the fuzzy set $u(x,y)=\frac{x+y}{2}$ in $X=[0,1]^2$ as a grey scale figure.
\begin{figure}[h!]
  \centering
  \includegraphics[width=13cm]{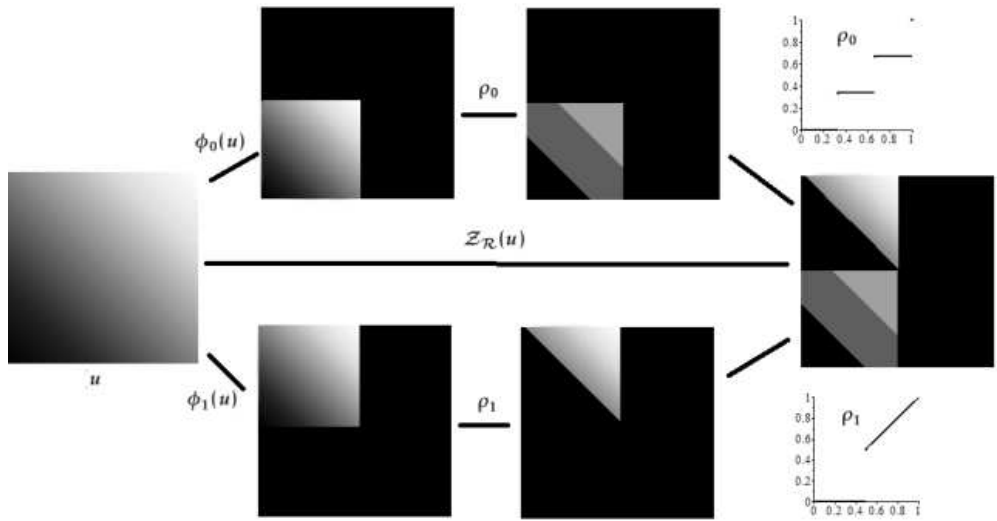}
  \caption{ The FHB operator acting on $u$.}\label{example fhb operator graph}
\end{figure}
\end{example}

\begin{proposition} \label{grey level map preserves *}
 If $u \in \mathcal{F}_{X}^*$ then  $\mathcal{Z_R}(u) \in \mathcal{F}_{X}^*$.
\end{proposition}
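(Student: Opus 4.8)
The plan is to verify the three defining properties of $\mathcal{F}^*_X$ — normality, upper semicontinuity, and compact support — for the fuzzy set $\mathcal{Z_R}(u)=\bigvee_{j=0}^{n-1}\rho_j(\phi_j(u))$, assembling the facts already established above.

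First I would record the basic building block. For each $j$, since $u\in\mathcal{F}^*_X$ and $\phi_j:X\to X$ is continuous, Proposition~\ref{compose usc plus cont} gives that $\phi_j(u)$ is again normal, usc and compactly supported, i.e. $\phi_j(u)\in\mathcal{F}^*_X$. As the system $(\rho_j)$ is admissible, each $\rho_j$ is ndrc, so Proposition~\ref{equivalence ndrc} applies and $\rho_j(\phi_j(u))$ is usc for every $j$. A finite pointwise maximum of usc functions is usc (apply the lemma on pointwise suprema of usc functions with index set $\mathcal{T}=\{0,\dots,n-1\}$), hence $\mathcal{Z_R}(u)$ is usc. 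For normality I would use condition (d) of admissibility: there is an index $j$ with $\rho_j(1)=1$; by normality of $u$ and Proposition~\ref{compose usc plus cont}(a), $\phi_j(u)$ is normal, so $\phi_j(u)(b)=1$ for some $b\in X$, and then $\rho_j(\phi_j(u))(b)=\rho_j(1)=1$, whence $\mathcal{Z_R}(u)(b)\geq 1$, so $\mathcal{Z_R}(u)(b)=1$ and $\mathcal{Z_R}(u)$ is normal.

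For compact support, the key point is condition (c) of admissibility, namely $\rho_j(0)=0$: it forces $\{x:\rho_j(\phi_j(u))(x)>0\}\subseteq\{x:\phi_j(u)(x)>0\}\subseteq[\phi_j(u)]^0$. Since the support of a finite maximum is the union of the supports, $\{x:\mathcal{Z_R}(u)(x)>0\}=\bigcup_j\{x:\rho_j(\phi_j(u))(x)>0\}\subseteq\bigcup_j[\phi_j(u)]^0$, and taking closures (a finite union commutes with closure) yields $[\mathcal{Z_R}(u)]^0\subseteq\bigcup_j[\phi_j(u)]^0$, a finite union of compact sets and hence compact; a closed subset of a compact set is compact, so $[\mathcal{Z_R}(u)]^0$ is compact. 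Alternatively, one could read off $[\rho_j(\phi_j(u))]^0$ directly from Proposition~\ref{alpha cut compose with ndrc rho}(c), which in the case $\rho_j(0)=0$ is always a compact subset of $[\phi_j(u)]^0$, and conclude the same way. Combining the three verifications gives $\mathcal{Z_R}(u)\in\mathcal{F}^*_X$.

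I do not expect a genuine obstacle: the statement is essentially bookkeeping built on Propositions~\ref{compose usc plus cont} and~\ref{equivalence ndrc} together with the admissibility conditions. The only points that deserve a moment's care are checking that $\rho_j(0)=0$ is exactly what keeps the support of $\rho_j(\phi_j(u))$ inside the compact set $[\phi_j(u)]^0$, and that for finitely many sets the closure of the union equals the union of the closures.
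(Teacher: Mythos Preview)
Your proposal is correct and follows essentially the same approach as the paper's proof: both invoke Proposition~\ref{compose usc plus cont} to get $\phi_j(u)\in\mathcal{F}_X^*$, Proposition~\ref{equivalence ndrc} for upper semicontinuity of $\rho_j(\phi_j(u))$, and the admissibility condition $\rho_{j_0}(1)=1$ for normality. The only minor difference is that the paper cites Proposition~\ref{alpha cut compose with ndrc rho}(c) directly for compact support of each $\rho_j(\phi_j(u))$, whereas you first give the elementary inclusion argument using $\rho_j(0)=0$ and then mention that route as an alternative.
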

\begin{proof}
From Proposition \ref{compose usc plus cont}, $\phi_j(u) \in \mathcal{F}_{X}^*$ for any $j=0...n-1$, because each $\phi_j$ is Lipschitz continuous. Moreover, $\rho_j(\phi_j(u))$ are usc because  the grey level maps are admissible (see Proposition~\ref{equivalence ndrc}). Finally, $\rho_j(\phi_j(u))$ is compactly supported for each $j$ by Proposition~\ref{alpha cut compose with ndrc rho}(c). Thus $\mathcal{Z_R}(u)$ is usc and compactly supported as the supremum of usc and compactly supported maps.

From the admissibility of $\rho_j$ there is some $j_0$ such that $\rho_{j_{0}}(1)=1$. Since $\phi_{j_{0}}(u)$ is normal,  we can find $y_0$ such that $\phi_{j_0}(u)(y_0)=1$. Then
$$\mathcal{Z_R}(u)(y_0) \geq \rho_{j_{0}}(\phi_{j_{0}}(u)(y_0))= 1.$$
Thus, $\mathcal{Z_R}(u)$ is normal.
\end{proof}

The next lemma will be proved with more generality that we need here. Such a version will be useful in other applications.
\begin{lemma} \label{general properties of fuzzy composition} Let $X$ and $Y$ be  metric  spaces and $(\rho_j)_{j=0...n-1}$ an admissible family of grey level maps. Consider $\phi_{j}: X \to Y$ a family of continuous maps for $j=0...n-1$. Then, for each $u \in \mathcal{F}_{X}^*$\\
\vspace{.3cm}
a) $q_{j}=\rho_{j}(\phi_{j}(u)) \in \mathcal{F}_{Y}$ is usc  and compactly supported;\\
\vspace{.2cm}
b) $[q_{j}]^{\alpha}=\phi_{j}([\rho_{j}(u)]^{\alpha})\;$ for every $\alpha\in[0,1]$;\\
\vspace{.2cm}
c) $\displaystyle \left[\bigvee_{j=0...n-1} \rho_j(\phi_j(u))\right]^{\alpha} = \bigcup_{j=0...n-1}\phi_{j}([\rho_{j}(u)]^{\alpha})\;$ for every $\alpha\in[0,1]$;\\
\vspace{.2cm}
d) $\displaystyle \bigvee_{j=0...n-1} \rho_j(\phi_j(u))$ is normal.
\end{lemma}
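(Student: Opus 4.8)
The four assertions are closely intertwined, so the natural order is to prove (b) first, deduce (a) and (c) from it, and finish with (d). The crux is part (b), which says that taking $\alpha$-cuts commutes with the operation $u\mapsto\rho_j(\phi_j(u))$, in the sense that $[\rho_j(\phi_j(u))]^\alpha=\phi_j([\rho_j(u)]^\alpha)$. The key algebraic identity behind this is that the Zadeh extension interacts with $\alpha$-cuts via $[\phi(v)]^\alpha=\phi([v]^\alpha)$ whenever $v$ is usc and compactly supported, which one proves as follows: the inclusion $\phi([v]^\alpha)\subseteq[\phi(v)]^\alpha$ is immediate from $\phi(v)(\phi(x))\geq v(x)$; for the reverse, if $\phi(v)(y)=\sup_{\phi(x)=y}v(x)\geq\alpha$, then by upper semicontinuity of $v$ and compactness of $[v]^{\alpha-\varepsilon}$ the supremum over the closed fibre is actually attained (or, working with $\alpha-\varepsilon$ and using the same compactness argument as in Proposition~\ref{compose usc plus cont}(b), one extracts a convergent subsequence $x_n\to x$ with $v(x)\geq\alpha$ and $\phi(x)=y$), giving $y\in\phi([v]^\alpha)$. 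Applying this with $v=\rho_j(u)$ — which is usc and compactly supported by Proposition~\ref{equivalence ndrc} and Proposition~\ref{alpha cut compose with ndrc rho}(c) — together with the cut-formula $[\rho_j(\phi_j(u))]^\alpha=[\phi_j(u)]^{\beta_j(\alpha)}=\phi_j([u]^{\beta_j(\alpha)})=\phi_j([\rho_j(u)]^\alpha)$ from Proposition~\ref{alpha cut compose with ndrc rho}(b) yields (b). One must handle the degenerate cases of that proposition (where a cut equals $X$ or $\emptyset$) separately, but since $\phi_j(X)$ need not be all of $Y$ this requires a small reconciliation: the definition of $[\rho_j(\phi_j(u))]^\alpha$ through the Zadeh extension already restricts attention to $\phi_j(X)$, so the identity $[\rho_j(\phi_j(u))]^\alpha=\phi_j([\rho_j(u)]^\alpha)$ remains correct even then.

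For part (a), note that $q_j=\rho_j(\phi_j(u))$ is usc and compactly supported by Proposition~\ref{compose usc plus cont}(b) applied to the continuous map $\phi_j$ and the fuzzy set $u$ (giving $\phi_j(u)\in\mathcal F_Y^*$ after noting normality by Proposition~\ref{compose usc plus cont}(a)), followed by Proposition~\ref{equivalence ndrc} and Proposition~\ref{alpha cut compose with ndrc rho}(c) to conclude that composing with the ndrc map $\rho_j$ preserves upper semicontinuity and compact support. This is essentially the argument already run inside the proof of Proposition~\ref{grey level map preserves *}, now stated for maps between possibly different spaces $X$ and $Y$.

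For part (c), I would use the standard fact that the $\alpha$-cut of a finite pointwise maximum is the union of the $\alpha$-cuts: $[\bigvee_{j}q_j]^\alpha=\bigcup_j[q_j]^\alpha$ for $\alpha\in(0,1]$ directly from the definitions, and for $\alpha=0$ from $\{x:\max_j q_j(x)>0\}=\bigcup_j\{x:q_j(x)>0\}$ together with the fact that closure commutes with finite unions. Combining this with part (b) gives $[\bigvee_j\rho_j(\phi_j(u))]^\alpha=\bigcup_j[q_j]^\alpha=\bigcup_j\phi_j([\rho_j(u)]^\alpha)$, which is exactly (c).

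Finally, (d): by admissibility there is $j_0$ with $\rho_{j_0}(1)=1$; since $u$ is normal pick $a$ with $u(a)=1$, so $\phi_{j_0}(u)(\phi_{j_0}(a))\geq u(a)=1$, hence $\rho_{j_0}(\phi_{j_0}(u)(\phi_{j_0}(a)))=\rho_{j_0}(1)=1$, and therefore $\bigvee_j\rho_j(\phi_j(u))$ attains the value $1$ at $\phi_{j_0}(a)$. I expect the main obstacle to be the careful verification of the attainment-of-supremum / subsequence-extraction step inside part (b) in the degenerate regimes of Proposition~\ref{alpha cut compose with ndrc rho}, and making sure the bookkeeping with $\beta_j(\alpha)$ (possibly zero, possibly exceeding $r_+$) matches the Zadeh-extension definition on $\phi_j(X)$ rather than on all of $Y$; everything else is routine manipulation of cuts and suprema.
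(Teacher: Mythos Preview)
Your proposal is correct and follows essentially the same route as the paper, which simply defers parts (b) and (c) to Cabrelli et al.\ \cite{MR1192494} and remarks that (a) and (d) are handled as in Propositions~\ref{compose usc plus cont} and~\ref{grey level map preserves *}; you have merely written out the details that the paper omits. Two small clean-ups: in your chain $[\rho_j(\phi_j(u))]^\alpha=[\phi_j(u)]^{\beta_j(\alpha)}=\phi_j([u]^{\beta_j(\alpha)})=\phi_j([\rho_j(u)]^\alpha)$ the Zadeh-cut identity $[\phi(v)]^\gamma=\phi([v]^\gamma)$ is being applied with $v=u$, not with $v=\rho_j(u)$ as you write; and the degenerate case $\beta_j(\alpha)=0$ of Proposition~\ref{alpha cut compose with ndrc rho}(b) never occurs for $\alpha>0$ here, since admissibility forces $\rho_j(0)=0$, so the only genuine boundary case left to check is $\alpha=0$ (which you correctly flag).
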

\begin{proof} The proof follows exactly the same reasoning as in \cite{MR1192494}, except by a) that is a consequence of Proposition~\ref{compose usc plus cont}  (and can be proved similarly as Proposition \ref{grey level map preserves *}), and d) which also can be proved similarly as in Proposition \ref{grey level map preserves *}.
\end{proof}

The following result is an extension of classical Hutchinson-Barnsley Theorem for IFZS. We skip the proof as it is the same as in the particular case of compact $X$ proved in \cite{MR1192494} (also, later we will prove much more general result).
\begin{theorem}\label{cabrelli fixed point fractal operator}
 Given a contractive IFZS $\mathcal{Z_R}=(X,(\phi_j)_{j=0,...,n-1},(\rho_j)_{j=0,...,n-1})$,  the FHB operator $\mathcal{Z_R}: \mathcal{F}_{X}^* \to \mathcal{F}_{X}^*$ is a Banach contraction in $(\mathcal{F}_{X}^*, d_\infty)$. More precisely,
$$d_\infty(\mathcal{Z_R}(u),\mathcal{Z_R}(v)) \leq \lambda \; d_\infty(u,v), \; \forall u,v \in \mathcal{F}_{X}^*,$$
where $\lambda:=\max\{Lip(\phi_j):j=0,...,n-1\}$ and $Lip(\phi_j)$, $j=0,...,n-1$ are contraction constants of $\phi_0,...,\phi_{n-1}$, respectively.\\
In particular, if $X$ is complete, then there exists a unique $u^* \in \mathcal{F}_{X}^*$ such that $$\mathcal{Z_R}(u^*)=u^*$$ and, moreover, for any $v \in \mathcal{F}_{X}^*$ we get
$$d_\infty(\mathcal{Z_R}^{(k)}(v), \; u^*) \to 0$$
where $\mathcal{Z_R}^{(k)}(v)$ denotes the $k$-th iteration of the (FHB) operator $\mathcal{Z_R}$.
\end{theorem}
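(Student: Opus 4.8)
The plan is to show that $\mathcal{Z_R}$ is a self-map of $\mathcal{F}^*_X$ which contracts $d_\infty$ by the factor $\lambda$, and then to invoke the Banach Fixed Point Theorem. The self-map property is already available: Proposition~\ref{grey level map preserves *} guarantees $\mathcal{Z_R}(u)\in\mathcal{F}^*_X$ for every $u\in\mathcal{F}^*_X$.

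For the contraction estimate I would fix $u,v\in\mathcal{F}^*_X$ and argue through the $\alpha$-cuts. Since $\mathcal{Z_R}(u),\mathcal{Z_R}(v)\in\mathcal{F}^*_X$, Corollary~\ref{f2} reduces the task to bounding $h([\mathcal{Z_R}(u)]^\alpha,[\mathcal{Z_R}(v)]^\alpha)$ for each fixed $\alpha\in(0,1]$. By Lemma~\ref{general properties of fuzzy composition}(c),
$$[\mathcal{Z_R}(u)]^\alpha=\bigcup_{j=0}^{n-1}\phi_j\bigl([\rho_j(u)]^\alpha\bigr),$$
and analogously for $v$. Put $J_\alpha:=\{j:\alpha\le\rho_j(1)\}$, which is nonempty by condition~(d) in Definition~\ref{admissible gey level maps} and does not depend on $u,v$; for $j\notin J_\alpha$ Proposition~\ref{alpha cut compose with ndrc rho}(b) gives $[\rho_j(u)]^\alpha=\emptyset$, so both unions may be taken over $J_\alpha$ only. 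For $j\in J_\alpha$, condition~(c) there, that is $\rho_j(0)=0$, forces $\beta_j(\alpha)>0$, so Proposition~\ref{alpha cut compose with ndrc rho}(b) yields $[\rho_j(u)]^\alpha=[u]^{\beta_j(\alpha)}$ and $[\rho_j(v)]^\alpha=[v]^{\beta_j(\alpha)}$ with $\beta_j(\alpha)\in(0,1]$. Using the elementary estimates $h\bigl(\bigcup_{j\in J_\alpha}A_j,\bigcup_{j\in J_\alpha}B_j\bigr)\le\max_{j\in J_\alpha}h(A_j,B_j)$ and $h(\phi_j(A),\phi_j(B))\le Lip(\phi_j)\,h(A,B)$, I obtain
$$h\bigl([\mathcal{Z_R}(u)]^\alpha,[\mathcal{Z_R}(v)]^\alpha\bigr)\le\max_{j\in J_\alpha}Lip(\phi_j)\,h\bigl([u]^{\beta_j(\alpha)},[v]^{\beta_j(\alpha)}\bigr)\le\lambda\sup_{\gamma\in(0,1]}h\bigl([u]^\gamma,[v]^\gamma\bigr)=\lambda\,d_\infty(u,v),$$
the last equality again by Corollary~\ref{f2}. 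Taking the supremum over $\alpha\in(0,1]$ gives $d_\infty(\mathcal{Z_R}(u),\mathcal{Z_R}(v))\le\lambda\,d_\infty(u,v)$, so $\mathcal{Z_R}$ is a Banach contraction because $\lambda<1$. When $X$ is complete, $(\mathcal{F}^*_X,d_\infty)$ is a complete metric space by Theorem~\ref{Fuzzy Space is Complete}, and Theorem~\ref{Banach Fixed Point theorem} applied to $\mathcal{Z_R}$ then produces a unique fixed point $u^*\in\mathcal{F}^*_X$ together with the convergence $d_\infty(\mathcal{Z_R}^{(k)}(v),u^*)\to0$ for every $v\in\mathcal{F}^*_X$.

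I do not expect a deep obstacle once the preceding lemmas are in hand; the scheme is the classical Hutchinson--Barnsley one. The steps that need genuine care are: making sure the degenerate branch ``$[\rho_j(u)]^\alpha=X$'' of Proposition~\ref{alpha cut compose with ndrc rho}(b) is excluded -- this is exactly where admissibility condition~(c) enters, and it matters precisely because $X$ is only assumed complete, not compact -- and checking that the empty cuts are discarded consistently for both $u$ and $v$, which is why the index set $J_\alpha$ must be chosen independently of $u$. The one substantial ingredient beyond such bookkeeping, namely completeness of $(\mathcal{F}^*_X,d_\infty)$ for an arbitrary complete $X$, has already been secured in Theorem~\ref{Fuzzy Space is Complete}.
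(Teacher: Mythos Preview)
Your argument is correct and follows essentially the same route as the paper: the paper in fact omits the proof here, deferring to \cite{MR1192494} and to the more general Theorem~\ref{Fractal operator is contractive}, whose specialization to $m=1$ is exactly your computation (the step you carry out explicitly via $\beta_j(\alpha)$ and the index set $J_\alpha$ is packaged there as Lemma~\ref{rho composite with fuzzy cut}). Your explicit handling of the empty cuts and of the excluded branch $\beta_j(\alpha)=0$ is, if anything, more careful than the paper's general argument.
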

%

\begin{definition}\emph{The fuzzy set $u^*$ from the above theorem is called} the fuzzy attractor \emph{or} fuzzy fractal generated by IFZS $\mathcal{Z_R}$.
\end{definition}
\begin{remark}\emph{Directly from the definition and Lemma~\ref{general properties of fuzzy composition} it follows that if $u^*$ is the fuzzy fractal generated by a IFZS $\mathcal{Z_R}=(X,(\phi_j)_{j=0,...,n-1},(\rho_j)_{j=0,...,n-1})$, then for every $\alpha\in[0,1]$,
$$[u^*]^{\alpha} = \bigcup_{j=0...n-1}\phi_{j}([\rho_{j}(u^*)]^{\alpha})$$
}\end{remark}
The next result is known as the IFZS collage theorem.
\begin{theorem} Assume that $X$ is complete and let $\mathcal{R}=(X, (\phi_j)_{j=0...n-1})$ be a contractive IFS with contraction constant $\lambda=\max_{j} \mathrm{Lip}(\phi_j) < 1$ and $u^* \in \mathcal{F}_{X}^*$ be the fuzzy attractor of the IFZS $\mathcal{Z_R}=(X, (\phi_j)_{j=0...n-1}, (\rho_j)_{j=0...n-1})$. If $v \in \mathcal{F}_{X}^*$  then
$$d_{\infty}(v, u^*) < \frac{1}{1-\lambda} \; d_{\infty}(v, \mathcal{Z_R}(v)).$$
\end{theorem}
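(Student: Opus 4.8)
The plan is to recognize this as a direct application of the Collage Theorem (Theorem~\ref{Collage Theorem}) combined with Theorem~\ref{cabrelli fixed point fractal operator}. The abstract Collage Theorem applies to any Lipschitz contraction on a complete metric space, so the only things I need to verify are that the ambient space is a complete metric space and that the map in question is a Lipschitz contraction with a known Lipschitz constant.

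\begin{proof}
Since $X$ is complete, Theorem~\ref{Fuzzy Space is Complete} guarantees that $(\mathcal{F}_{X}^*, d_\infty)$ is a complete metric space. By Proposition~\ref{grey level map preserves *}, the FHB operator $\mathcal{Z_R}$ maps $\mathcal{F}_{X}^*$ into itself, and by Theorem~\ref{cabrelli fixed point fractal operator}, $\mathcal{Z_R}:\mathcal{F}_{X}^*\to\mathcal{F}_{X}^*$ is a Banach contraction with
$$d_\infty(\mathcal{Z_R}(u),\mathcal{Z_R}(v))\leq\lambda\,d_\infty(u,v),\qquad\forall u,v\in\mathcal{F}_{X}^*,$$
where $\lambda=\max_j\mathrm{Lip}(\phi_j)<1$; moreover, its unique fixed point is precisely the fuzzy attractor $u^*$. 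Applying the Collage Theorem~\ref{Collage Theorem} with $A=\mathcal{F}_{X}^*$, $d=d_\infty$, $T=\mathcal{Z_R}$ and the point $v$ in place of $u$, we obtain
$$d_\infty(v,u^*)\leq\frac{1}{1-\mathrm{Lip}(\mathcal{Z_R})}\,d_\infty(v,\mathcal{Z_R}(v))\leq\frac{1}{1-\lambda}\,d_\infty(v,\mathcal{Z_R}(v)),$$
where the last inequality uses $\mathrm{Lip}(\mathcal{Z_R})\leq\lambda$. This proves the claimed estimate.
\end{proof}

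The only subtlety worth commenting on is the strict inequality appearing in the statement versus the non-strict one produced by the Collage Theorem. I would either state the result with $\leq$ (which is what the cited Collage Theorem gives and is the mathematically natural bound), or note that if $v\ne u^*$ and $\lambda<\mathrm{Lip}(\mathcal{Z_R})$ were to fail one could still not in general force strictness — so in practice the honest statement is the non-strict inequality. I expect no real obstacle here: every ingredient (completeness of $(\mathcal{F}_{X}^*,d_\infty)$, invariance of $\mathcal{F}_{X}^*$ under $\mathcal{Z_R}$, the contraction estimate, and the existence/uniqueness of $u^*$) has already been established earlier in the excerpt, so the proof is essentially a one-line citation chain.
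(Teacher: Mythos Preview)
Your proof is correct and follows essentially the same approach as the paper: both invoke completeness of $(\mathcal{F}_X^*,d_\infty)$ from Theorem~\ref{Fuzzy Space is Complete}, the contraction estimate for $\mathcal{Z_R}$ from Theorem~\ref{cabrelli fixed point fractal operator}, and then apply the abstract Collage Theorem~\ref{Collage Theorem}. Your remark on the strict versus non-strict inequality is also apt --- the Collage Theorem as stated yields only $\leq$, so the strict inequality in the statement is not actually justified by the cited ingredients.
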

\begin{proof} The proof is a combination of the following results. From Theorem~ \ref{Fuzzy Space is Complete} we get that $(\mathcal{F}_{X}^*, d_{\infty})$ is complete. From Theorem~\ref{cabrelli fixed point fractal operator} we get that $\mathcal{Z_R}$ is a Lipschitz contraction with $\lambda=\mathrm{Lip}(\mathcal{Z_R})$ and we also have the existence of the fuzzy fractal attractor $u^*$. So our result follows from the standard Collage Theorem~\ref{Collage Theorem}.
\end{proof}

We end this section with presenting some further properties of IFZS.
\begin{definition}\emph{
Given $u, v  \in \mathcal{F}_{X}^*$, we say that $ u \leq v$ if $u(x) \leq v(x)$ for all $x \in X$}.
\end{definition}
\begin{lemma}
If $\mathcal{Z_R}$ is an IFZS, then the associated operator $\mathcal{Z_R}$  is monotone that is $\mathcal{Z_R}(u) \leq \mathcal{Z_R}(v)$ if $u \leq v$.
\end{lemma}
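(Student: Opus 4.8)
The plan is to reduce the monotonicity of the FHB operator $\mathcal{Z_R}$ to elementary monotonicity properties of the three building blocks that constitute it: Zadeh's extension principle applied to each $\phi_j$, post-composition with each grey level map $\rho_j$, and the finite supremum $\bigvee_{j}$. Since by definition
$$\mathcal{Z_R}(u)=\bigvee_{j=0\dots n-1}\rho_j(\phi_j(u)),$$
it suffices to show that each of these operations preserves the pointwise order $\leq$ on $\mathcal{F}^*_X$, and then compose them.

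First I would check that the extension principle is monotone: if $u\leq v$ then $\phi_j(u)\leq \phi_j(v)$ for each $j$. Indeed, fix $y\in X$; if $y\notin\phi_j(X)$ both sides are $0$, and otherwise $\phi_j(u)(y)=\sup_{\phi_j(x)=y}u(x)\leq\sup_{\phi_j(x)=y}v(x)=\phi_j(v)(y)$, since taking a supremum over the same (nonempty) index set is monotone in the function. Second, I would observe that post-composition with a nondecreasing $\rho_j$ is monotone: from $\phi_j(u)(y)\leq\phi_j(v)(y)$ and $\rho_j$ nondecreasing we get $\rho_j(\phi_j(u))(y)\leq\rho_j(\phi_j(v))(y)$ for every $y$, i.e. $\rho_j(\phi_j(u))\leq\rho_j(\phi_j(v))$. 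Third, a finite supremum of fuzzy sets is monotone in each argument: if $a_j\leq b_j$ for $j=0,\dots,n-1$ then $\bigvee_j a_j\leq\bigvee_j b_j$ pointwise. Chaining these three facts gives
$$\mathcal{Z_R}(u)=\bigvee_{j}\rho_j(\phi_j(u))\leq\bigvee_{j}\rho_j(\phi_j(v))=\mathcal{Z_R}(v),$$
which is the claim. (One should note in passing that $\mathcal{Z_R}(u),\mathcal{Z_R}(v)\in\mathcal{F}^*_X$ by Proposition~\ref{grey level map preserves *}, so the statement is well posed, but this is not needed for the inequality itself.)

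There is essentially no obstacle here: every step is a one-line verification, and the only mild subtlety is the case distinction in Zadeh's extension principle (the value $0$ off the image of $\phi_j$), which is harmless because $0$ is the least element of $[0,1]$ and the order is respected trivially there. The admissibility of the $\rho_j$ is used only through condition (a), nondecreasingness; right continuity, $\rho_j(0)=0$, and $\rho_j(1)=1$ play no role in monotonicity. Thus the whole proof is just: monotonicity of $\sup$ over a fixed index set, monotonicity of composing with a nondecreasing map, and monotonicity of finite joins, applied in that order.
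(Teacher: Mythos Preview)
Your proof is correct and follows essentially the same approach as the paper's: both verify monotonicity of Zadeh's extension $\phi_j(\cdot)$ via the pointwise supremum (with the same case distinction on whether $y\in\phi_j(X)$), then use that the $\rho_j$ are nondecreasing, and conclude by monotonicity of the finite join. Your additional remarks on which admissibility conditions are actually used are accurate but not present in the paper's version.
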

\begin{proof} If $u \leq v$ then for every $j=0,...,n-1$ and $z\in X$, we have
$$ \phi_{j}(u)(z)=\sup_{\phi_{j}(y)=z} u(y) \leq \sup_{\phi_{j}(y)=z} v(y)=\phi_{j}(v)(z) $$
provided $\phi_j^{-1}(z)\neq\emptyset$, and
$$
\phi_j(u)(z)=0=\phi_j(v)(z)
$$
in the opposite case. Hence $\phi_j(u)\leq \phi_j(v)$, and thus we also have
$$\mathcal{Z_R}(u)= \bigvee_{j=0...n-1} \rho_j(\phi_j(u)) \leq \bigvee_{j=0...n-1} \rho_j(\phi_j(v)) = \mathcal{Z_R}(v),$$
because the grey scale maps are nondecreasing.
\end{proof}
\begin{proposition} \label{Monotone Attractor and Fuzzy Attract} Let the crisp set $A(\mathcal{R}) \in \mathcal{K}^*(X)$ be the attractor of an IFS $\mathcal{R}=(X, (\phi_j)_{j=0...n-1})$, and $u^* \in \mathcal{F}_{X}^*$ be the fuzzy attractor of the IFZS $\mathcal{Z_R}=(X, (\phi_j)_{j=0...n-1}, (\rho_j)_{j=0...n-1})$. Then, for any  $B \in \mathcal{K}^*(X)$ and $v \in \mathcal{F}_{X}^*$ we have\\
a) if $\mathcal{Z_R}(v) \leq v$ then $u^* \leq v$;\\
b) if $\mathcal{R}(B) \subseteq B$ then $A(\mathcal{R}) \subseteq B$;\\
c) if $v \leq \mathcal{Z_R}(v)$ then $v \leq u^*$;\\
d) if $B \subseteq \mathcal{R}(B) $ then $B \subseteq A(\mathcal{R})$.\\
\end{proposition}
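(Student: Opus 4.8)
The plan is to prove all four statements by the same monotone-iteration argument, using the Banach fixed point theorem's convergence assertion together with the monotonicity of the relevant Hutchinson--Barnsley operators. I will do the fuzzy cases (a) and (c) in detail; the crisp cases (b) and (d) are the classical analogues and follow by exactly the same reasoning applied to the operator $\mathcal{R}:\mathcal{K}^*(X)\to\mathcal{K}^*(X)$, which is monotone with respect to inclusion (indeed $A\subseteq B$ implies $\phi_j(A)\subseteq\phi_j(B)$, hence $\mathcal{R}(A)\subseteq\mathcal{R}(B)$). So the real content is the observation that a monotone Banach contraction on an ordered complete metric space pushes any "supersolution" down to, and any "subsolution" up to, its fixed point.

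For (a): assume $\mathcal{Z_R}(v)\leq v$. Applying the operator and using that $\mathcal{Z_R}$ is monotone (by the preceding Lemma), we get $\mathcal{Z_R}^{(2)}(v)\leq\mathcal{Z_R}(v)\leq v$, and inductively $\mathcal{Z_R}^{(k+1)}(v)\leq\mathcal{Z_R}^{(k)}(v)\leq\cdots\leq v$ for all $k\in\N$. In particular $\mathcal{Z_R}^{(k)}(v)\leq v$ for every $k$. By Theorem~\ref{cabrelli fixed point fractal operator}, $d_\infty(\mathcal{Z_R}^{(k)}(v),u^*)\to0$. It remains to pass to the limit in the inequality $\mathcal{Z_R}^{(k)}(v)\leq v$, i.e.\ to check that the pointwise order $\leq$ on $\mathcal{F}^*_X$ is closed under $d_\infty$-convergence: if $w_k\to w^*$ in $d_\infty$ and $w_k\leq v$ for all $k$, then $w^*\leq v$. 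This is the one step that needs a small argument, and it is where I expect the only real (though minor) obstacle: $d_\infty$-convergence is convergence of all $\alpha$-cuts in Hausdorff distance, so I would argue at the level of cuts. Fix $x\in X$ with $w^*(x)=\alpha>0$; then $x\in[w^*]^{\alpha'}$ for every $\alpha'\in(0,\alpha]$, and since $h([w_k]^{\alpha'},[w^*]^{\alpha'})\to0$ there are points $x_k\in[w_k]^{\alpha'}$ with $x_k\to x$; from $w_k\leq v$ we get $x_k\in[v]^{\alpha'}$, and $[v]^{\alpha'}$ is compact (hence closed), so $x\in[v]^{\alpha'}$, i.e.\ $v(x)\geq\alpha'$ for all $\alpha'<\alpha$, whence $v(x)\geq\alpha=w^*(x)$. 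Thus $w^*\leq v$, and applying this with $w_k=\mathcal{Z_R}^{(k)}(v)$, $w^*=u^*$ yields $u^*\leq v$.

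For (c): the argument is symmetric. Assume $v\leq\mathcal{Z_R}(v)$; monotonicity gives the increasing chain $v\leq\mathcal{Z_R}(v)\leq\mathcal{Z_R}^{(2)}(v)\leq\cdots$, so $v\leq\mathcal{Z_R}^{(k)}(v)$ for all $k$, and since $\mathcal{Z_R}^{(k)}(v)\to u^*$ in $d_\infty$ we need the dual closedness statement: if $w_k\to w^*$ and $v\leq w_k$ for all $k$, then $v\leq w^*$. This is proved the same way via $\alpha$-cuts: if $v(x)=\alpha>0$ then $x\in[w_k]^{\alpha}$ for all $k$, and using $h([w_k]^{\alpha},[w^*]^{\alpha})\to0$ together with compactness of $[w^*]^{\alpha}$ one sees $d(x,[w^*]^{\alpha})=0$, so $x\in[w^*]^{\alpha}$, i.e.\ $w^*(x)\geq\alpha=v(x)$. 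Hence $v\leq u^*$. Finally, (b) and (d) are obtained by repeating the two chains of inequalities verbatim with $\mathcal{Z_R}$ replaced by $\mathcal{R}$, $\leq$ replaced by $\subseteq$, $u^*$ replaced by $A(\mathcal{R})$, $v$ by $B$, and using that $\mathcal{K}^*(X)$ is complete (so $\mathcal{R}^{(k)}(B)\to A(\mathcal{R})$ in $h$ by the Banach fixed point theorem) and that inclusion is closed under Hausdorff convergence of compact sets — the same compactness/closedness argument as above, in the single-cut case. $\blacksquare$
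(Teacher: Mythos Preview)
Your proposal is correct and follows essentially the same route as the paper: iterate using monotonicity of $\mathcal{Z_R}$ to get $\mathcal{Z_R}^{(k)}(v)\leq v$ (resp.\ $\geq v$), invoke Theorem~\ref{cabrelli fixed point fractal operator} for convergence to $u^*$, and pass to the limit using closedness of the order relation in $(\mathcal{F}^*_X,d_\infty)$; parts (b) and (d) are handled analogously with $\mathcal{R}$ on $\mathcal{K}^*(X)$. The only difference is that the paper merely asserts closedness of $\{u\in\mathcal{F}^*_X:u\leq v\}$ by referring to \cite{MR1192494}, whereas you spell out the $\alpha$-cut argument explicitly.
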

\begin{proof}
a) If $\mathcal{Z_R}(v) \leq v$ we get $\mathcal{Z_R}^{(k)}(v) \leq v$ for $k \geq 1$, taking the limit and using Theorem~\ref{cabrelli fixed point fractal operator} we get  $\mathcal{Z_R}^{(k)}(v) \to u^*$. Hence $u^* \leq v$. Indeed, similarly as in \cite{MR1192494} we can show that $\left\{u\in\mathcal{F}^*_X:u\leq v\right\}$ is closed in $\mathcal{F}^*_X$. The other items are proved in the same fashion.
\end{proof}

\begin{theorem} \label{zero cut IFZS versus IFS} Let the crisp set $A(\mathcal{R}) \in \mathcal{K}^*$ be the attractor of the IFS $\mathcal{R}=(X, (\phi_j)_{j=0...n-1})$, and $u^* \in \mathcal{F}_{X}^*$ be the fuzzy attractor of the IFZS $\mathcal{Z_R}=(X, (\phi_j)_{j=0...n-1}, (\rho_j)_{j=0...n-1})$. Then $[u^*]^{0} \subseteq A(\mathcal{R})$.
\end{theorem}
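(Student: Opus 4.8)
The plan is to reduce the statement to an application of Proposition~\ref{Monotone Attractor and Fuzzy Attract}(d), namely the fact that any $B\in\mathcal{K}^*(X)$ with $B\subseteq\mathcal{R}(B)$ satisfies $B\subseteq A(\mathcal{R})$. So the whole task amounts to showing that $[u^*]^0$ is such a set, i.e. that $[u^*]^0\subseteq\mathcal{R}([u^*]^0)=\bigcup_{j=0...n-1}\phi_j([u^*]^0)$. Note first that $[u^*]^0\in\mathcal{K}^*(X)$: it is compact because $u^*\in\mathcal{F}^*_X$ is compactly supported, and nonempty because $u^*$ is normal.

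First I would invoke the Remark following Theorem~\ref{cabrelli fixed point fractal operator} (the fixed-point identity for the $\alpha$-cuts), applied at $\alpha=0$, to write
$$[u^*]^0=\bigcup_{j=0...n-1}\phi_j\!\left([\rho_j(u^*)]^0\right).$$
Then the key elementary observation is that $[\rho_j(u^*)]^0\subseteq[u^*]^0$ for every $j$. Indeed, since the system $(\rho_j)$ is admissible we have $\rho_j(0)=0$, and as $\rho_j$ is nondecreasing this forces $\rho_j(t)>0\Rightarrow t>0$; hence $\{x:\rho_j(u^*(x))>0\}\subseteq\{x:u^*(x)>0\}$, and taking closures gives $[\rho_j(u^*)]^0\subseteq[u^*]^0$. (Alternatively, one reads this off directly from the three cases of Proposition~\ref{alpha cut compose with ndrc rho}(c), each of which is contained in $[u^*]^0$ once $\rho_j(0)=0$.)

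Combining these two facts, $\phi_j([\rho_j(u^*)]^0)\subseteq\phi_j([u^*]^0)$ for each $j$, so
$$[u^*]^0=\bigcup_{j=0...n-1}\phi_j\!\left([\rho_j(u^*)]^0\right)\subseteq\bigcup_{j=0...n-1}\phi_j\!\left([u^*]^0\right)=\mathcal{R}([u^*]^0).$$
Applying Proposition~\ref{Monotone Attractor and Fuzzy Attract}(d) with $B=[u^*]^0$ then yields $[u^*]^0\subseteq A(\mathcal{R})$, as desired.

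There is no serious obstacle here; the proof is short. The only point requiring a little care is the inclusion $[\rho_j(u^*)]^0\subseteq[u^*]^0$, which rests precisely on condition~c) in the definition of an admissible system of grey level maps ($\rho_j(0)=0$) — this is exactly the "black pixels stay black" hypothesis, and it is what prevents the support of the fuzzy attractor from being larger than the crisp attractor. Everything else is a direct assembly of the fixed-point identity for $\alpha$-cuts and the monotonicity lemma for the HB operator.
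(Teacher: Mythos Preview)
Your proof is correct, but it takes a different route from the paper's. The paper works on the fuzzy side, using part~(a) of Proposition~\ref{Monotone Attractor and Fuzzy Attract}: it takes $v=\bchi_{A(\mathcal{R})}\in\mathcal{F}_X^*$, checks that $\mathcal{Z_R}(\bchi_{A(\mathcal{R})})\leq\bchi_{A(\mathcal{R})}$ (using $\phi_j(\bchi_{A(\mathcal{R})})=\bchi_{\phi_j(A(\mathcal{R}))}\leq\bchi_{A(\mathcal{R})}$ and $\rho_j(0)=0$, $\rho_j(1)\leq 1$), and concludes $u^*\leq\bchi_{A(\mathcal{R})}$, whence $[u^*]^0\subseteq A(\mathcal{R})$. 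You instead work on the crisp side via part~(d): you pass to the $0$-cut identity for $u^*$, use $\rho_j(0)=0$ to get $[\rho_j(u^*)]^0\subseteq[u^*]^0$, and deduce $[u^*]^0\subseteq\mathcal{R}([u^*]^0)$. Both arguments hinge on the same admissibility condition $\rho_j(0)=0$; the paper's route yields the slightly stronger pointwise inequality $u^*\leq\bchi_{A(\mathcal{R})}$ along the way, while yours stays entirely within $\mathcal{K}^*(X)$ after one invocation of the $\alpha$-cut formula and is arguably more direct for the stated conclusion.
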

\begin{proof} Consider $\bchi_{A(\mathcal{R})} \in \mathcal{F}_{X}^*$. From Remark~\ref{Crisp Set Extension to Fuzzy Set}, for any $z\in X$, we have
$$ \phi_{j}(\bchi_{A(\mathcal{R})})(z)=
\bchi_{\phi_{j}(A(\mathcal{R}))}(z) \leq \bchi_{A(\mathcal{R})}(z),$$
because $A(\mathcal{R}) = \cup \phi_{j}(A(\mathcal{R}))$ implies $\phi_{j}(A(\mathcal{R})) \subseteq A(\mathcal{R})(z)$ for any $j$. In particular $\rho_j(\phi_{j}(\bchi_{A(\mathcal{R})})(z)) \leq  \rho_j( \bchi_{A(\mathcal{R})} (z)) \leq \bchi_{A(\mathcal{R})}(z)$ for any $j$, because  $\rho_j(0)=0$ and $\rho_j(1) \leq 1$.  Since $\mathcal{Z_R}(u)= \bigvee_{j=0...n-1} \rho_j(\phi_j(u))$ is a supremum we get $\mathcal{Z_R}(\bchi_{A(\mathcal{R})}) \leq \bchi_{A(\mathcal{R})}$. From Proposition~\ref{Monotone Attractor and Fuzzy Attract} we get $u^* \leq \bchi_{A(\mathcal{R})}$ thus $[u^*]^{0} \subseteq A(\mathcal{R})$, because if $x \not\in A(\mathcal{R})$ then $ 0= \bchi_{A(\mathcal{R})}(x) \geq u^*(x) \geq 0$ so $u^*(x) =0$.
\end{proof}

\subsection{Generalized iterated function systems}



In this section we recall the theory of generalized iterated function systems introduced by Miculescu and Mihail in 2008.

Let $(X,d)$ be a metric space and $m\in\N$. By $X^m$ we denote the Cartesian product of $m$ copies of $X$, considered as a metric space with the maximum metric $d^m$:
\begin{equation}\label{d^m}
d^m((x_0,...,x_{m-1}),(y_0,...,y_{m-1})):=\max\{d(x_0,y_0),...,d(x_{m-1},y_{m-1})\},\;\;\;(x_0,...,x_{m-1}),(y_0,...,y_{m-1})\in X^m.
\end{equation}
It turns out that appropriately contractive GIFSs generates fractals sets. In order to formulate the result we need some further notation.
\begin{definition}\emph{
We say that $f:X^m\to X$ is }a generalized Matkowski contraction of degree $m$\emph{, if for some nondecreasing $\varphi:[0,\infty)\to [0,\infty)$ with $\varphi^{(k)}(t)\to 0$ for $t>0$ (here $(\varphi^{(k)}(t))$ is the sequence of iterations of $\varphi$ at the point $t$), it holds
\begin{equation*}
d(f(x),f(y))\leq\varphi(d^m(x,y)),\;\;x,y\in X^m
\end{equation*}
A function $\varphi$ is called }a witness for $f$.
\end{definition}
\begin{remark}\emph{
(1) It is easy to see that if $Lip(f)<1$, then $f$ is a generalized Matkowski contraction - the function $\varphi(t):=Lip(f)\cdot t$ is a witness.\\
(2) If $m=1$, then a generalized Matkowski contraction is called a Matkowski contraction, and it is known that each Matkowski contraction on a complete metric space satisfies the thesis Banach Fixed Point theorem (see Matkowski~\cite{MR0412650}). In fact, it is one of the strongest generalizations of the Banach Fixed Point theorem. For comparison of other notions of contractiveness, we refer the reader to a paper \cite{MR2338580}.}
\end{remark}
The next result shows that the (mentioned above) Matkowski fixed point theorem can be extended to generalized Matkowski contractions. For a proof, see Strobin and Swaczyna \cite{MR3263451}, \cite{MR3011940} and Mihail and Miculescu~\cite{MR2415407}, Theorem 3.4 (for a weaker case).
\begin{theorem}
 \label{Generalized Fixed Point Theorem} Let $(A,d)$ be a complete metric space. Given a generalized Matkowski contraction $F: A^m \to A$, there exists a unique $a \in A$ such that
$F(a, ..., a)= a.$
Moreover, for every $a_0, a_1, . . . , a_{m-1} \in A$, the sequence $a_k, \; k \geq 0$ defined by
$$a_{k+m}=F(a_{k+ m-1}, a_{k+ m-2}, . . . , a_{k}),$$
for all $k \in \mathbb{N}$, is convergent to $a$.
\end{theorem}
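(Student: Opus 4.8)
### Proof proposal for Theorem~\ref{Generalized Fixed Point Theorem}

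\textbf{The plan} is to reduce the statement to the classical Matkowski fixed point theorem (the $m=1$ case) by building, out of $F:A^m\to A$, a single auxiliary contraction on a suitable complete metric space, namely the product $A^m$ equipped with the metric $d^m$ from~\eqref{d^m}. Concretely, I would introduce the ``shift-and-apply'' map $G:A^m\to A^m$ defined by
\begin{equation*}
G(a_0,a_1,\dots,a_{m-1}):=\bigl(a_1,a_2,\dots,a_{m-1},F(a_{m-1},a_{m-2},\dots,a_0)\bigr),
\end{equation*}
so that the recursion $a_{k+m}=F(a_{k+m-1},\dots,a_k)$ is exactly the orbit of $G$ started at $(a_0,\dots,a_{m-1})$, read off coordinatewise. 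A fixed point of $G$ is a constant tuple $(a,\dots,a)$ with $F(a,\dots,a)=a$, and conversely; so existence/uniqueness of the fixed point of $F$ on the diagonal is equivalent to existence/uniqueness of a fixed point of $G$.

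\textbf{First} I would record that $(A^m,d^m)$ is complete whenever $(A,d)$ is (routine). \textbf{Next}, the key estimate: using that $\varphi$ is nondecreasing and that $F$ is a generalized Matkowski contraction with witness $\varphi$, one checks that $G$ (or rather a fixed power of $G$) contracts $d^m$ in the Matkowski sense. The subtlety is that $G$ itself is \emph{not} a Matkowski contraction for $d^m$ — the first $m-1$ coordinates of $G(x)$ are just a shift, which does not decrease distance at all. The standard fix is to look at $G^{(m)}$: after $m$ steps every coordinate has been ``refreshed'' through an application of $F$, and one gets
\begin{equation*}
d^m\bigl(G^{(m)}(x),G^{(m)}(y)\bigr)\leq \psi\bigl(d^m(x,y)\bigr)
\end{equation*}
for a suitable nondecreasing $\psi$ built from $\varphi$ (one can take $\psi=\varphi$ after checking the bookkeeping, since a single application of $\varphi$ already dominates, $\varphi$ being nondecreasing and $\varphi(t)\le t$ not being needed — only $\varphi^{(k)}(t)\to0$). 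Then $\psi^{(k)}(t)=\varphi^{(k)}(t)\to0$ for $t>0$, so $G^{(m)}$ is a Matkowski contraction on the complete space $(A^m,d^m)$, hence has a unique fixed point $p^*$ by the $m=1$ Matkowski theorem.

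\textbf{Then} I would upgrade this: since $G\circ G^{(m)}=G^{(m)}\circ G$, the point $G(p^*)$ is also a fixed point of $G^{(m)}$, so by uniqueness $G(p^*)=p^*$, i.e.\ $p^*$ is a fixed point of $G$; uniqueness for $G$ follows from uniqueness for $G^{(m)}$. Writing $p^*=(p_0,\dots,p_{m-1})$, the fixed-point equation for $G$ forces $p_0=p_1=\dots=p_{m-1}=:a$ and $F(a,\dots,a)=a$, which is the desired unique diagonal fixed point of $F$. \textbf{Finally}, for the convergence statement: given arbitrary $a_0,\dots,a_{m-1}$, the tuple $x_0:=(a_0,\dots,a_{m-1})$ satisfies $G^{(k)}(x_0)\to p^*$ in $d^m$ (the convergence clause of the Matkowski theorem applied to $G^{(m)}$, together with the fact that the intermediate iterates $G,G^{(2)},\dots,G^{(m-1)}$ of a convergent-under-$G^{(m)}$ sequence also converge to $p^*$ — here one uses continuity of $G$, which is immediate since $F$ is continuous). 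Reading off any fixed coordinate of $G^{(k)}(x_0)$ gives $a_k\to a$.

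\textbf{The main obstacle} I anticipate is purely the bookkeeping in the key estimate for $G^{(m)}$: one must track how a discrepancy in the input tuple propagates through $m$ successive shifts-and-applications and verify that after exactly $m$ steps each coordinate has been passed through $\varphi$ at least once, with no coordinate escaping unimproved; monotonicity of $\varphi$ is what lets one collapse the resulting nested maxima into a single $\varphi(d^m(x,y))$. Everything else (completeness of $A^m$, the equivalence between fixed points of $G$ and diagonal fixed points of $F$, passing from $G^{(m)}$ back to $G$) is routine.
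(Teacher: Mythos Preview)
The paper does not supply its own proof of this theorem; it simply cites Strobin--Swaczyna \cite{MR3011940}, \cite{MR3263451} and Mihail--Miculescu \cite{MR2415407}. Your reduction via the shift map $G$ on $(A^m,d^m)$ and passage to the iterate $G^{(m)}$ is a correct and standard route to the result, and the overall architecture (fixed point of $G^{(m)}$ $\Rightarrow$ fixed point of $G$ $\Rightarrow$ diagonal fixed point of $F$; convergence of the full orbit from convergence along arithmetic progressions plus continuity of $G$) is sound.

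One point deserves correction. In the bookkeeping for $d^m(G^{(m)}(x),G^{(m)}(y))\le\varphi(d^m(x,y))$ you \emph{do} need $\varphi(t)\le t$: writing $\delta=d^m(x,y)$ and $x_m,x_{m+1},\dots$ for the successive new coordinates, the estimate for $d(x_{m+1},y_{m+1})$ is $\varphi\bigl(\max\{d(x_1,y_1),\dots,d(x_{m-1},y_{m-1}),d(x_m,y_m)\}\bigr)$, and to collapse the inner maximum to $\delta$ you must know $d(x_m,y_m)\le\varphi(\delta)\le\delta$. Your parenthetical remark that ``$\varphi(t)\le t$ is not needed'' is therefore misleading. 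What is true is that this inequality is not an extra hypothesis: nondecreasing plus $\varphi^{(k)}(t)\to0$ forces $\varphi(t)<t$ for every $t>0$ (otherwise monotonicity gives $\varphi^{(k)}(t)\ge t$ for all $k$). So the step goes through, but you should state and use this consequence explicitly rather than claim to avoid it.
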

Also in Strobin \cite{MR3263451} (see also Mihail~\cite{MR2568892}), we have
\begin{proposition} \label{Mihail Dist  Unions}
Let  $(X, d)$ a  metric space and $f_j:X^m\to X$, $0,...,n-1$ be generalized Matkowski contractions, with witnessing functions $\varphi_j, \;j=0,...,n-1$. Then the map $F: \mathcal{K}^{*}(X)^m \to \mathcal{K}^{*}(X)$ given by $$F(H_{0}, ..., H_{m-1})=\bigcup_{j=0}^{n-1} f_j(H_{0}\times \cdots \times H_{m-1}),$$
is a generalized Matkowski contraction with witness function $\varphi=\max_{j}\varphi_j$.\\
In particular, if $f_j$ is Lipschitz contractive with $Lip(f_j)<1, \;j=0,...,n-1$, then $F$ is also Lipschitz contractive and $Lip(F)\leq \max_{j} Lip(f_j)<1$.
\end{proposition}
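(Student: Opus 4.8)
The plan is to reduce the statement about the composite map $F$ on $\mathcal K^*(X)^m$ to two elementary facts: first, that each single map $H \mapsto f_j(H_0\times\cdots\times H_{m-1})$ from $\mathcal K^*(X)^m$ to $\mathcal K^*(X)$ is well defined and is a generalized Matkowski contraction witnessed by $\varphi_j$; and second, that taking a finite union of such maps respects the Matkowski contraction estimate with witness $\max_j\varphi_j$. The key metric inequality I would establish is that for $A,B\in\mathcal K^*(X)$ and any continuous $f:X^m\to X$,
\[
h\!\left(f(A_0\times\cdots\times A_{m-1}),\,f(B_0\times\cdots\times B_{m-1})\right)\le \varphi_j\!\left(\max_{0\le i\le m-1} h(A_i,B_i)\right),
\]
which, once in hand, together with the standard identity $h(C_0\cup\cdots,\,D_0\cup\cdots)\le\max_k h(C_k,D_k)$, yields immediately
\[
h\!\left(F(A),F(B)\right)\le\max_j \varphi_j\!\left(d^{\mathcal K,m}(A,B)\right)=\left(\max_j\varphi_j\right)\!\left(d^{\mathcal K,m}(A,B)\right),
\]
using that $\max_j\varphi_j$ is again nondecreasing and, as is routine to check, still has iterates tending to $0$ at each positive $t$ (this monotone-envelope fact is exactly the one used in Theorem~\ref{Generalized Fixed Point Theorem} type arguments, so I would just cite it or verify it in a line).

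First I would check that $F$ maps into $\mathcal K^*(X)$: each product $A_0\times\cdots\times A_{m-1}$ is a nonempty compact subset of $X^m$, its continuous image under $f_j$ is nonempty and compact, and a finite union of such is again in $\mathcal K^*(X)$. Next I would prove the displayed single-map inequality. Writing $r:=\max_i h(A_i,B_i)$, the defining property of the Hausdorff metric gives that for each $i$ and each $a_i\in A_i$ there is $b_i\in B_i$ with $d(a_i,b_i)\le r$ (and symmetrically); hence for $a=(a_0,\dots,a_{m-1})\in A_0\times\cdots\times A_{m-1}$ there is $b\in B_0\times\cdots\times B_{m-1}$ with $d^m(a,b)\le r$, so by the Matkowski estimate for $f_j$ we get $d(f_j(a),f_j(b))\le\varphi_j(r)$. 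This shows $f_j(A_0\times\cdots\times A_{m-1})\subseteq \big(f_j(B_0\times\cdots\times B_{m-1})\big)_{\varphi_j(r)}$, and the reverse inclusion follows by symmetry; by the $\varepsilon$-neighbourhood characterization of $h$ recalled earlier, this is precisely $h(f_j(\cdots A\cdots),f_j(\cdots B\cdots))\le\varphi_j(r)$.

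The remaining step is to combine over $j$. From $h(\bigcup_{j} C_j,\bigcup_j D_j)\le\max_j h(C_j,D_j)$ (a standard property of the Hausdorff metric, which I would state and give the one-line argument for) and the single-map bound with $r=d^{\mathcal K,m}(A,B)$, monotonicity of each $\varphi_j$ gives $h(F(A),F(B))\le\max_j\varphi_j(r)=(\max_j\varphi_j)(r)$, which is the assertion, with witness $\varphi=\max_j\varphi_j$. The Lipschitz addendum is then immediate: if each $f_j$ has $\mathrm{Lip}(f_j)<1$, take $\varphi_j(t)=\mathrm{Lip}(f_j)\cdot t$, so $\varphi(t)=\big(\max_j\mathrm{Lip}(f_j)\big)\cdot t$ witnesses $F$ and shows $\mathrm{Lip}(F)\le\max_j\mathrm{Lip}(f_j)<1$. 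I do not expect a genuine obstacle here; the only mildly delicate point is the selection argument producing, for a point of the product $A_0\times\cdots\times A_{m-1}$, a nearby point of $B_0\times\cdots\times B_{m-1}$ coordinatewise — but this is exactly why the maximum metric $d^m$ is used on $X^m$, and it makes the passage from the coordinatewise Hausdorff bounds to the bound on the product immediate.
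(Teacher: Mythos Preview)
The paper does not supply its own proof of this proposition; it is quoted from Strobin~\cite{MR3263451} (with a further reference to Mihail~\cite{MR2568892}). Your argument is correct and is exactly the standard one found in those sources: bound $h\big(f_j(\times_i A_i),\,f_j(\times_i B_i)\big)$ by $\varphi_j\big(\max_i h(A_i,B_i)\big)$ via a coordinatewise nearest-point selection (using compactness of the $B_i$ and the maximum metric $d^m$), then pass to the finite union via the inequality $h(\bigcup_j C_j,\bigcup_j D_j)\le\sup_j h(C_j,D_j)$, which the paper records separately as Proposition~\ref{filip1}(ii). The one step you call a one-liner that is slightly more than that is checking that $\varphi=\max_j\varphi_j$ again has $\varphi^{(k)}(t)\to 0$; the paper itself treats this as evident (last line of the proof of Theorem~\ref{Fractal operator is contractive}), so your level of detail matches the paper's.
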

Now let us recall some properties of the Hausdorff distance. A proof can be found for example in ~\cite{MR2415407}, Proposition 2.7.
\begin{proposition}\label{filip1}
 If $(Y, d')$ and $(Z, d'')$ are metric spaces then  \\
i) if $H$ and $K$ are non empty sets of $Y$ then $h(H,K)=h(\overline{H},\overline{K})$;\\
ii) if $H_i$ and $K_i$ for $i \in I$, are non empty families of  sets of $Y$ then $\displaystyle h\left(\bigcup_{i \in I} H,\bigcup_{i\in I} K\right) \leq \sup_{i \in I} h(H_{i},K_{i})$.
\end{proposition}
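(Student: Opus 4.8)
The plan is to reduce both statements to the one-sided excess functional $e(A,B):=\sup_{x\in A}d(x,B)$, which satisfies $h(A,B)=\max\{e(A,B),e(B,A)\}$, together with two elementary facts about the point-to-set distance: for every nonempty $B\subseteq Y$ the function $x\mapsto d(x,B)$ is $1$-Lipschitz (hence continuous), and $d(x,B)=d(x,\overline{B})$ for all $x$.

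For (i), I would first record $d(x,K)=d(x,\overline K)$, so that $e(H,K)=\sup_{x\in H}d(x,\overline K)$. Since $x\mapsto d(x,\overline K)$ is continuous, its supremum over $H$ equals its supremum over $\overline H$; thus $e(H,K)=\sup_{x\in\overline H}d(x,\overline K)=e(\overline H,\overline K)$. Swapping the roles of $H$ and $K$ gives $e(K,H)=e(\overline K,\overline H)$, and taking the maximum of the two yields $h(H,K)=h(\overline H,\overline K)$. (Alternatively one can argue from the $\varepsilon$-enlargement formula $h(A,B)=\inf\{\varepsilon>0:A\subseteq B_\varepsilon,\ B\subseteq A_\varepsilon\}$: each set $B_\varepsilon$ is closed and equals $\overline{B}_\varepsilon$ because $d(\cdot,B)=d(\cdot,\overline B)$, so the conditions defining the infimum are literally the same for the pair $(H,K)$ and for $(\overline H,\overline K)$; I would keep whichever version reads more cleanly.)

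For (ii), fix $x\in\bigcup_{i\in I}H_i$ and choose $i_0\in I$ with $x\in H_{i_0}$. Since $K_{i_0}\subseteq\bigcup_{i\in I}K_i$, we have
\[
d\Bigl(x,\bigcup_{i\in I}K_i\Bigr)\le d(x,K_{i_0})\le e(H_{i_0},K_{i_0})\le h(H_{i_0},K_{i_0})\le\sup_{i\in I}h(H_i,K_i).
\]
Taking the supremum over $x\in\bigcup_{i\in I}H_i$ gives $e\bigl(\bigcup_{i}H_i,\bigcup_{i}K_i\bigr)\le\sup_{i}h(H_i,K_i)$, and interchanging the families $H_i$ and $K_i$ gives the symmetric bound $e\bigl(\bigcup_{i}K_i,\bigcup_{i}H_i\bigr)\le\sup_{i}h(H_i,K_i)$. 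Taking the maximum of the two one-sided estimates yields $h\bigl(\bigcup_{i\in I}H_i,\bigcup_{i\in I}K_i\bigr)\le\sup_{i\in I}h(H_i,K_i)$.

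Neither part presents a real obstacle. The only mild point of care is in (i), the passage from a supremum over $H$ to a supremum over $\overline{H}$, which relies on continuity of $x\mapsto d(x,\overline K)$; and throughout one should note the estimates remain valid in $[0,\infty]$ when the sets involved are unbounded, so no compactness or boundedness hypothesis is needed.
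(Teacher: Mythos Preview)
Your proof is correct and cleanly written; both parts are handled by the standard reduction to the one-sided excess $e(A,B)$, and the only delicate step (passing the supremum from $H$ to $\overline H$ via continuity of $x\mapsto d(x,\overline K)$) is properly flagged. Note that the paper does not actually supply its own proof of this proposition: it simply cites \cite{MR2415407}, Proposition~2.7, so there is nothing to compare your argument against beyond the fact that you have filled in what the authors chose to outsource.
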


We are ready to define generalized iterated function systems and prove the existence theorem (see mentioned papers \cite{MR3180942}, \cite{MR2568892}, \cite{MR2415407},\cite{MR3263451} and \cite{MR3011940}).

\begin{definition}
A generalized iterated function system of degree $m$ \emph{ (GIFS) is a (finite) family $\mathcal{S}$ of  continuous mappings $\phi_j: X^{m} \to X$, denoted $\mathcal{S}=(X, (\phi_j)_{j=0...n-1}).$\\
If each $\phi_j$ is a generalized Matkowski contraction, then we say that $\mathcal{S}$ is }Matkowski contractive.\emph{\\
If each $Lip(\phi_j)<1$, then we say that $\mathcal{S}$ is }Lipschitz contractive\emph{.\\
The operator $\mathcal{S}:\mathcal{K}^*(X)^m\to\mathcal{K}^*(X)$ defined by
$$
\mathcal{S}(K_0,...,K_{m-1}):=\bigcup_{j=0,...,n-1}\phi_j(K_0\times...\times K_{m-1})
$$
is called }the generalized Hutchinson-Barnsley (GHB) operator associated to $\mathcal{S}$.
\end{definition}

\begin{theorem} \label{GHB fuzzy attract exist and uniq} Given a Matkowski contractive GIFS $\mathcal{S}(X,(\phi_i)_{i=0,...,n-1})$ of order $m$, the GHB operator is a generalized Matkowski contraction. In particular, if $X$ is complete, then there exists a unique $A_\mathcal{S}\in\mathcal{K}^*(X)$ such that
$$
A_\mathcal{S}=\mathcal{S}(A_\mathcal{S},...,A_\mathcal{S})=\bigcup_{j=0,...,n-1}\phi_j(A_\mathcal{S}\times...\times A_\mathcal{S})
$$
Moreover, for every $K_0,...,K_{m-1}\in\mathcal{K}^*(X)$, the sequence $(K_k)$ defined by
$$
K_{k+m}=\mathcal{S}(K_{k},...,K_{k+m-1})
$$
converges to $A_\mathcal{S}$.
\end{theorem}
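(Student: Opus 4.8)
The plan is to obtain this theorem as a direct corollary of two results already at our disposal: Proposition~\ref{Mihail Dist Unions}, which transports generalized Matkowski contractivity from the generating maps $\phi_j$ to the associated set-valued operator, and Theorem~\ref{Generalized Fixed Point Theorem}, the fixed-point theorem for generalized Matkowski contractions on finite Cartesian powers of a complete metric space. So almost all of the work has been done in advance, and the proof is essentially a matter of checking that the hypotheses match up.

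First I would fix, for each $j=0,\dots,n-1$, a witness $\varphi_j:[0,\infty)\to[0,\infty)$ for $\phi_j$; such functions exist precisely because $\mathcal{S}$ is assumed Matkowski contractive. Next I would observe that the GHB operator $\mathcal{S}$ is a well-defined map $\mathcal{K}^*(X)^m\to\mathcal{K}^*(X)$: for $K_0,\dots,K_{m-1}\in\mathcal{K}^*(X)$ the product $K_0\times\cdots\times K_{m-1}$ is a nonempty compact subset of $X^m$, each $\phi_j$ is continuous, hence $\phi_j(K_0\times\cdots\times K_{m-1})$ is nonempty and compact, and a finite union of nonempty compact sets is again nonempty and compact. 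This is exactly the setting of Proposition~\ref{Mihail Dist Unions}, which therefore yields that $\mathcal{S}$ is a generalized Matkowski contraction on $\mathcal{K}^*(X)^m$ (taken with the maximum metric built from the Hausdorff distance $h$, consistently with the convention~\eqref{d^m}), with witness $\varphi:=\max_j\varphi_j$. This establishes the first assertion of the theorem.

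For the second part I would invoke completeness of the hyperspace: if $(X,d)$ is complete, then $(\mathcal{K}^*(X),h)$ is complete -- the classical Hausdorff-metric completeness result already used in the proof of Theorem~\ref{Fuzzy Space is Complete}. Then I apply Theorem~\ref{Generalized Fixed Point Theorem} with the complete metric space $(\mathcal{K}^*(X),h)$ in place of $(A,d)$ and $F=\mathcal{S}$: this produces a unique $A_\mathcal{S}\in\mathcal{K}^*(X)$ with $\mathcal{S}(A_\mathcal{S},\dots,A_\mathcal{S})=A_\mathcal{S}$, together with the stated convergence to $A_\mathcal{S}$ of the sequence $(K_k)$ defined by $K_{k+m}=\mathcal{S}(K_k,\dots,K_{k+m-1})$ for arbitrary initial data $K_0,\dots,K_{m-1}\in\mathcal{K}^*(X)$. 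Unwinding the definition of $\mathcal{S}$ then gives $A_\mathcal{S}=\bigcup_{j=0}^{n-1}\phi_j(A_\mathcal{S}\times\cdots\times A_\mathcal{S})$, which is what we wanted.

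I do not expect a genuine obstacle here, since the statement is little more than the composition of Proposition~\ref{Mihail Dist Unions} with Theorem~\ref{Generalized Fixed Point Theorem}. The only points deserving a word of care are that the maximum metric on $\mathcal{K}^*(X)^m$ appearing in Proposition~\ref{Mihail Dist Unions} is the same one to which Theorem~\ref{Generalized Fixed Point Theorem} applies, and the (routine) verification that $\mathcal{S}$ genuinely maps into $\mathcal{K}^*(X)$; both were addressed above.
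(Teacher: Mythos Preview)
Your proposal is correct and matches the paper's own proof essentially line for line: the paper simply notes that by the preceding auxiliary results (i.e., Proposition~\ref{Mihail Dist Unions}) the GHB operator is a generalized Matkowski contraction, and then invokes Theorem~\ref{Generalized Fixed Point Theorem}. Your additional remarks on well-definedness of $\mathcal{S}$ and completeness of $(\mathcal{K}^*(X),h)$ are routine checks the paper leaves implicit.
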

\begin{proof}
By the above auxiliary results we see that $\mathcal{S}:\mathcal{K}^*(X)^m\to\mathcal{K}^*(X)$ is a generalized Matkowski contraction (or even $Lip(\mathcal{S})<1$, if $\mathcal{S}$ is Lipschitz contractive), so the result follows from Theorem \ref{Generalized Fixed Point Theorem}.
\end{proof}



\section{GIFS fuzzyfication}
In this section we introduce and study a fuzzy version of GIFSs.\\
Let $X$ be a metric space and $m\in\N$. We start with recalling the definition of the finite Cartesian product of fuzzy sets.

\begin{definition} Given $m \geq 2$ and $u_{0}, ..., u_{m-1} \in \mathcal{F}_{X}$ we define we define the Cartesian product  $u_{0}\times ...\times u_{m-1} \in \mathcal{F}_{X^m}$ by
$$(u_{0}\times ...\times u_{m-1})(x_0, ..., x_{m-1})= \bigwedge_{i=0}^{m-1 }u_{i}(x_{i}).$$
\end{definition}

\begin{figure}[h!]
  \centering
  \includegraphics[width=2cm]{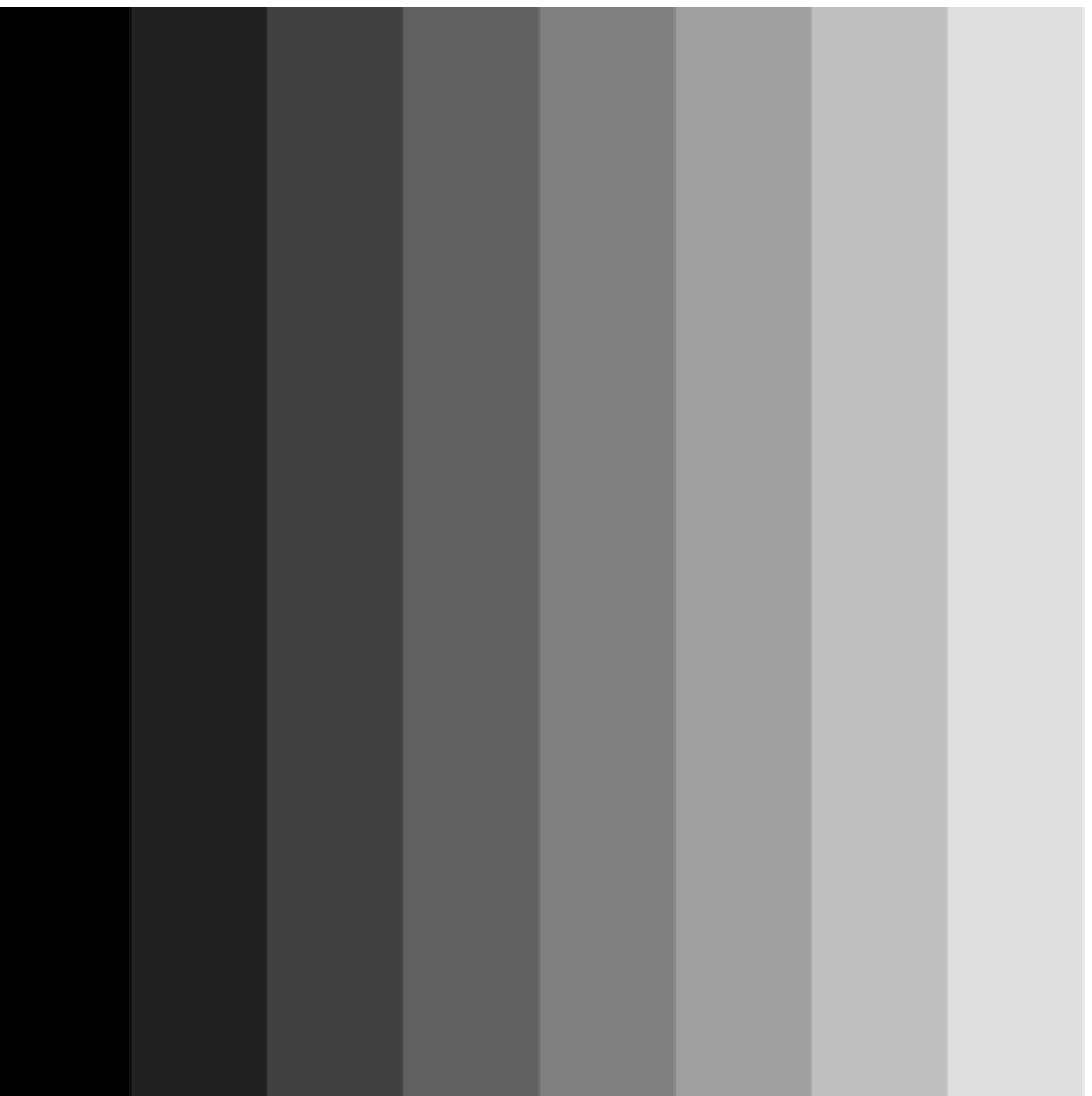}
  \includegraphics[width=2cm]{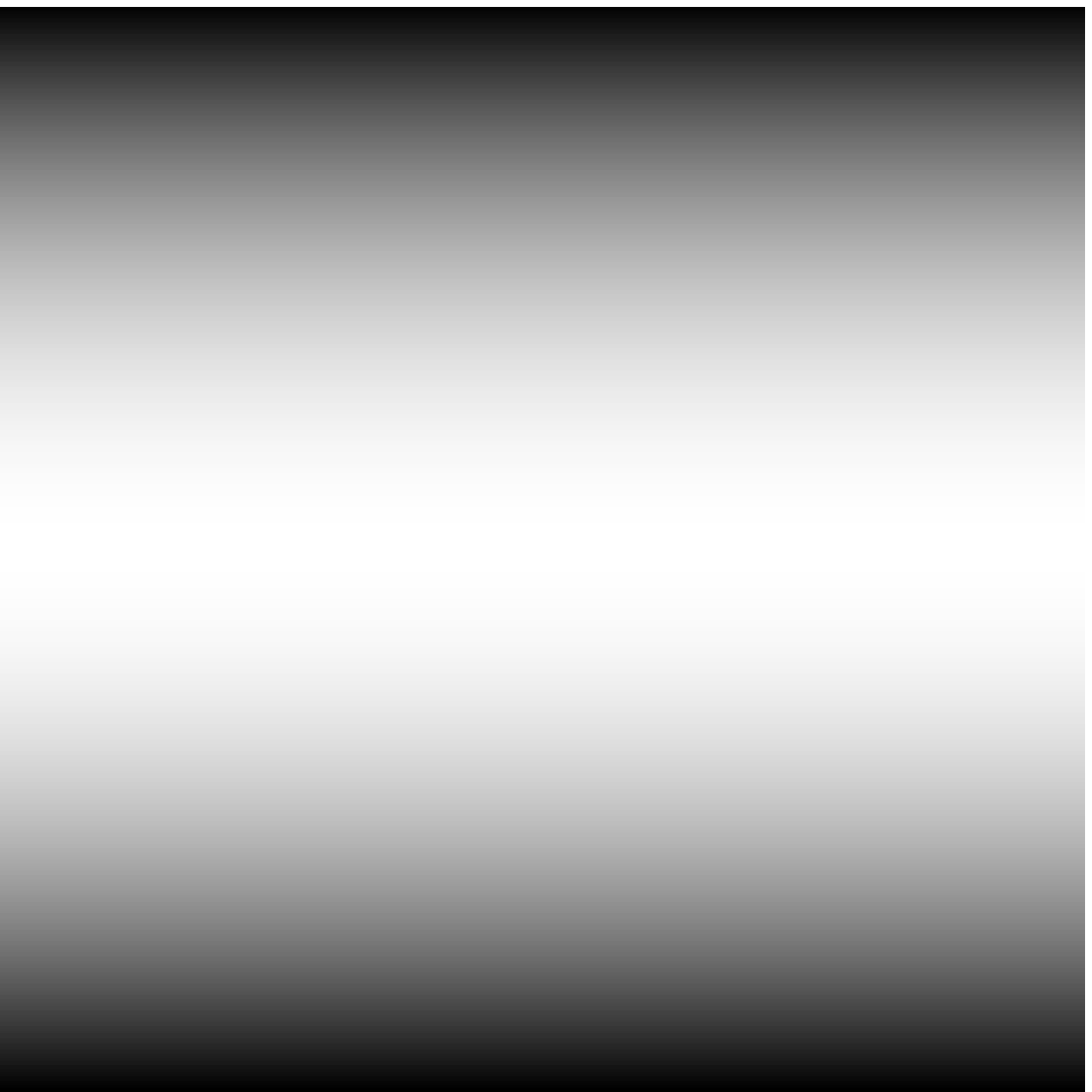}\\
  \includegraphics[width=3cm]{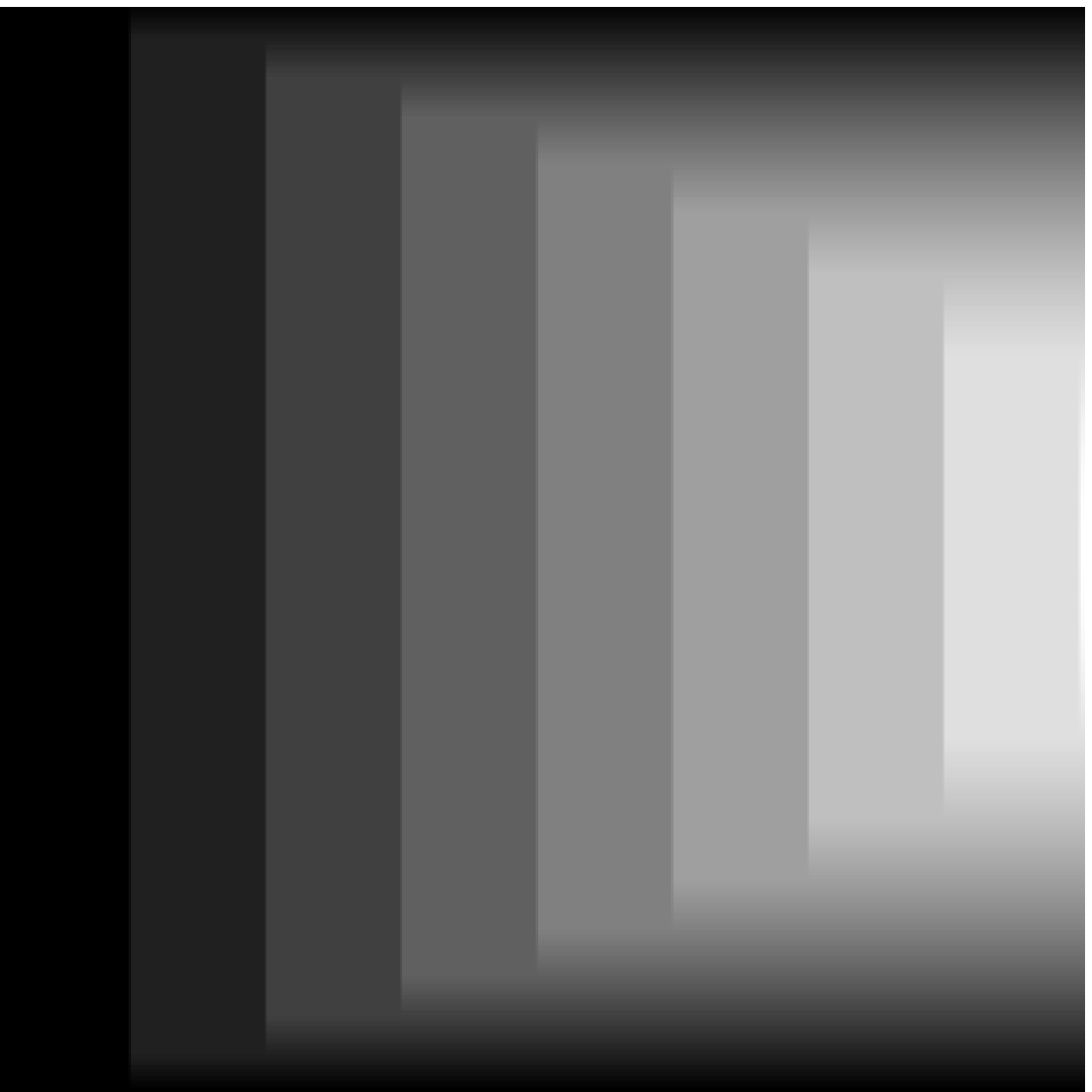}
  \caption{Representation of the fuzzy set $u \times v$ in $X=[0,1]^2$ as a grey scale figure (in the bottom). In the top, $u(x)=1/8 (8x - frac(8x))$ (left) and $v(y)=1- 4(x- \frac{1}{2})^2$ (right) are actually fuzzy sets in $X=[0,1]$ but we represent in $X=[0,1]^2$ to get a better graphical idea ($u(x)=u(x,y)$ and $v(y)=v(x,y)$).}\label{example fuzzy set cartesian}
\end{figure}

\begin{remark}
\textbf{We denote  $$(u_{0}, ..., u_{m-1})= (u)_{i=0}^{m-1}=(u)_i \text{  and }u_{0}\times ... \times u_{m-1} = \btimes_{i=0}^{m-1}  u_{i} = \btimes_{i}  u_{i}$$ to simplify the writing of elements in $(\mathcal{F}_{X}^*)^m$, if there is no risk of misunderstanding.}
\end{remark}

\begin{proposition}\label{cartesian product is usc and normal} Consider $u_{0}, ..., u_{m-1}  \in \mathcal{F}_{X}$.\\
a) If $u_{i}, \; i=0,..,m-1$ are normal, then $\btimes_{i}  u_{i}$ is normal.\\
b) If $u_{i}, \; i=0,..,m-1$ are usc, then $\btimes_{i}  u_{i}$ is usc.\\
c) If $u_i,\; i=1,...,m-1$ are compactly supported, then $\btimes_{i}  u_{i}$ is compactly supported.
\end{proposition}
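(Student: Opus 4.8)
The plan is to handle the three parts separately, in each case reducing the claim about the Cartesian product $\btimes_i u_i$ to the corresponding hypotheses on the factors $u_i$, using the characterization of normality/usc/compact support in terms of values and $\alpha$-cuts.

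For part a), since each $u_i$ is normal, pick $a_i \in X$ with $u_i(a_i) = 1$ for $i = 0,\dots,m-1$. Then at the point $(a_0,\dots,a_{m-1}) \in X^m$ we have $(\btimes_i u_i)(a_0,\dots,a_{m-1}) = \bigwedge_{i=0}^{m-1} u_i(a_i) = 1$, so $\btimes_i u_i$ is normal. For part b), the cleanest route is to observe that for each fixed $i$, the map $(x_0,\dots,x_{m-1}) \mapsto u_i(x_i)$ is the composition of the (continuous) coordinate projection $\pi_i : X^m \to X$ with the usc map $u_i$; since precomposition of an usc function with a continuous map is usc (this is $T^{-1}(v)(x) = v(T(x))$ together with the closedness characterization of usc, or directly Lemma 1.3 via sequences), each such map is usc on $X^m$. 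Then $\btimes_i u_i = \bigwedge_{i=0}^{m-1}(u_i \circ \pi_i)$ is a finite pointwise infimum of usc functions, hence usc by Lemma 1.2. (If one prefers to avoid invoking $T^{-1}$, one checks directly that $[\btimes_i u_i]^\alpha = [u_0]^\alpha \times \dots \times [u_{m-1}]^\alpha$ for $\alpha \in (0,1]$, which is a product of closed sets, hence closed in $X^m$; this identity is also what part c) will rest on.)

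For part c), I would first record the $\alpha$-cut identity
\[
\left[\btimes_{i=0}^{m-1} u_i\right]^\alpha = [u_0]^\alpha \times \dots \times [u_{m-1}]^\alpha, \qquad \alpha \in (0,1],
\]
which follows immediately from $\bigwedge_i u_i(x_i) \geq \alpha \iff u_i(x_i)\geq\alpha$ for all $i$. For the support ($\alpha = 0$): a point $(x_0,\dots,x_{m-1})$ satisfies $(\btimes_i u_i)(x_0,\dots,x_{m-1}) > 0$ iff $u_i(x_i) > 0$ for every $i$, so $\{(\btimes_i u_i) > 0\} = \prod_i \{u_i > 0\} \subseteq \prod_i [u_i]^0$. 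The right-hand side is a finite product of compact sets, hence compact in $(X^m, d^m)$ (Tychonoff, or simply sequential compactness under the maximum metric), and therefore closed; taking closures gives $[\btimes_i u_i]^0 = \overline{\{(\btimes_i u_i) > 0\}} \subseteq \prod_i [u_i]^0$, a closed subset of a compact set, hence compact. So $\btimes_i u_i$ is compactly supported.

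There is no real obstacle here — the statement is essentially bookkeeping — but the one point to be careful about is the hypothesis in part c), which (as literally typeset) asks only that $u_1,\dots,u_{m-1}$ be compactly supported and omits $u_0$; I would either treat this as a typo and assume all $u_0,\dots,u_{m-1}$ are compactly supported, or note that without compactness of $[u_0]^0$ one only gets $\{(\btimes_i u_i) > 0\} \subseteq [u_0]^0 \times \prod_{i\geq 1}[u_i]^0$, whose closure need not be compact, so the full hypothesis is genuinely needed. The other mild point is to make sure the product topology on $X^m$ really is the one induced by $d^m$ — but this is exactly \eqref{d^m}, so a finite product of compact sets is compact in $(X^m,d^m)$, and the argument goes through.
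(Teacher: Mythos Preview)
Your proof is correct and, in substance, matches the paper's. Part (a) is identical; for part (c) both you and the paper identify $\{(\btimes_i u_i)>0\}=\prod_i\{u_i>0\}$ and conclude compactness of the closure from compactness of the product. For part (b) the paper takes exactly your parenthetical route---it proves the level-set identity $(\btimes_i u_i)^{-1}([c,+\infty])=\prod_i u_i^{-1}([c,+\infty])$ directly---whereas your primary argument factors through ``$u_i\circ\pi_i$ is usc'' and ``a finite infimum of usc functions is usc''. Both are one-line standard facts, so this is a cosmetic difference rather than a genuinely different strategy. One small caveat: the Lemma~1.2 you cite is (as printed in the paper) about the \emph{supremum} of usc functions, not the infimum, so strictly speaking it does not cover the step you want; the infimum statement is of course true and equally elementary, but you should either prove it in one line or cite it from elsewhere. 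Your remark about the missing $u_0$ hypothesis in (c) is well taken---the paper's own proof tacitly uses compactness of all factors.
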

\begin{proof}\\
a) If $u_{i}(\bar{x}_{i}) =1, \; i=0,..,m-1$ then $(\btimes_{i}  u_{i}) (\bar{x}_{0}, ..., \bar{x}_{m-1})= 1$.\\
b) We claim that $$(\btimes_{i}  u_{i})^{-1}([c, +\infty])= u_{0}^{-1}([c, +\infty]) \times \cdots  \times u_{m-1}^{-1}([c, +\infty])$$
which will imply that $\btimes_{i}  u_{i}$ is usc as the latter set is obviously closed.

To see that our claim is true, we take $(\bar{x}_{0}, ..., \bar{x}_{m-1}) \in (\btimes_{i}  u_{i})^{-1}([c, +\infty])$. Then $$(\btimes_{i}  u_{i}) (\bar{x}_{0}, ..., \bar{x}_{m-1}) \geq c$$ that is $\bigwedge_{i=0}^{m-1 }u_{i}(\bar{x}_{i}) \geq c$ so $u_{i}(\bar{x}_{i}) \geq c$ for each $i=0,...,m-1$. Thus $(\bar{x}_{0}, ..., \bar{x}_{m-1}) \in u_{0}^{-1}([c, +\infty]) \times \cdots  \times u_{m-1}^{-1}([c, +\infty])$. The reciprocal it is true because the  minimum is the maximum of the lower bounds.\\
c) Similarly as in b) we can show that $$(\btimes_{i}  u_{i})^{-1}((0, +\infty])= u_{0}^{-1}((0, +\infty]) \times \cdots  \times u_{m-1}^{-1}((0, +\infty])$$
which clearly implies c).
\end{proof}

\begin{definition} \label{GIFZS definition}
A generalized iterated fuzzy function system of degree $m$ \emph{ (GIFZS in short) \\$\mathcal{Z_S}:=(X, (\phi_j)_{j=0...n-1}, (\rho_{j})_{j=0...n-1})$ consists of a GIFS $\mathcal{S}=(X, (\phi_j)_{j=0...n-1}),$ with a set of admissible grey level maps (see Definition~\ref{admissible gey level maps}) $(\rho_j)_{j=0...n-1}: [0,1]\to [0,1]$.\\
We say that ${\mathcal{Z_S}}$ is }Matkowski contractive \emph{or} Lipschitz contractive \emph{, if the GIFS $\mathcal{S}$ is so.\\
The operator $\mathcal{Z_S}:  \mathcal{F}_{X}^{*} \times \cdots \times \mathcal{F}_{X}^{*} \to \mathcal{F}_{X}^{*}$ defined by
$$\mathcal{Z_S}((u)_{i}):= \bigvee_{j=0...n-1} \rho_{j}(\phi_{j}(\btimes_{i}  u_{i}))$$
is called }the generalized fuzzy Hutchinson-Barnsley operator (GFHB) associated to $\mathcal{Z_S}$
\end{definition}

Recall that for $\phi_{j}: X^m \to X$ and $u_{0}, ..., u_{m-1} \in \mathcal{F}_{X}^*$ then $$\phi_{j}(\btimes_{i}  u_{i})(z)= \left\{
                                  \begin{array}{ll}
                                   \sup_{\phi_{j}((x)_{i})=z} \bigwedge_{i=0}^{m-1 }u_{i}(x_{i}), &\text{ if } \phi_{j}^{-1}(z) \neq \varnothing\\
                                    0,  & \text{otherwise}
                                  \end{array}
                                \right.
  .$$
\begin{proposition}
The operator  $\mathcal{Z_S}$ is well defined, that is, $\mathcal{Z_S} (\mathcal{F}_{X}^{*} \times \cdots \times \mathcal{F}_{X}^{*}) \subseteq \mathcal{F}_{X}^{*}$.
\end{proposition}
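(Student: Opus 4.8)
The plan is to factor $\mathcal{Z_S}$ through its three elementary operations — taking the Cartesian product of the fuzzy sets, pushing forward by each $\phi_j$, and composing with each $\rho_j$ — and to track the three defining properties (normal, usc, compactly supported) through each step, dealing with the finite join $\bigvee_j$ at the very end. This mirrors the proof of Proposition~\ref{grey level map preserves *} in the IFS case.

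First I would fix $u_0,\dots,u_{m-1}\in\mathcal{F}_X^*$ and look at $\btimes_i u_i\in\mathcal{F}_{X^m}$. Since each $u_i$ is normal, usc and compactly supported, Proposition~\ref{cartesian product is usc and normal} gives that $\btimes_i u_i$ is normal, usc and compactly supported, i.e.\ $\btimes_i u_i\in\mathcal{F}_{X^m}^*$. This is the one step that has no analogue in the IFS setting and is really the only place the structure of $X^m$ enters; everything afterwards is the classical argument applied to the map $\phi_j\colon X^m\to X$ and the input $\btimes_i u_i$.

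Next, since the GIFS consists of continuous maps $\phi_j\colon X^m\to X$ and $(\rho_j)$ is admissible, I would apply Lemma~\ref{general properties of fuzzy composition} with $X^m$ playing the role of ``$X$'' and $X$ the role of ``$Y$'', and $u=\btimes_i u_i$: part~(a) yields that each $q_j:=\rho_j(\phi_j(\btimes_i u_i))$ is usc and compactly supported, and part~(d) yields that $\bigvee_{j=0}^{n-1}q_j$ is normal. If one prefers to avoid the lemma, the same facts follow by repeating the argument of Proposition~\ref{grey level map preserves *}: Proposition~\ref{compose usc plus cont} makes $\phi_j(\btimes_i u_i)$ normal, usc and compactly supported; Propositions~\ref{equivalence ndrc} and \ref{alpha cut compose with ndrc rho}(c), using $\rho_j(0)=0$, preserve usc-ness and compact support under composition with $\rho_j$; and normality of the join comes from the admissibility clause providing $j_0$ with $\rho_{j_0}(1)=1$ together with the normality of $\phi_{j_0}(\btimes_i u_i)$.

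Finally, for $\mathcal{Z_S}((u)_i)=\bigvee_{j=0}^{n-1}q_j$: it is usc as a pointwise supremum of finitely many usc functions, and it is compactly supported since $\{x:\mathcal{Z_S}((u)_i)(x)>0\}\subseteq\bigcup_{j}\{x:q_j(x)>0\}$, whose closure is contained in the finite union $\bigcup_j[q_j]^0$ of compact sets and is therefore compact; combined with the normality already obtained, this gives $\mathcal{Z_S}((u)_i)\in\mathcal{F}_X^*$. I do not expect a genuine obstacle here — the proof is an assembly of earlier results — the only thing to be careful about is the first step, ensuring that $\btimes_i u_i$ actually lands in $\mathcal{F}_{X^m}^*$ so that the composition lemma is applicable.
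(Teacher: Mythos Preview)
Your proposal is correct and follows essentially the same route as the paper: first use Proposition~\ref{cartesian product is usc and normal} to get $\btimes_i u_i\in\mathcal{F}_{X^m}^*$, then apply Proposition~\ref{compose usc plus cont} and Lemma~\ref{general properties of fuzzy composition} (with $X^m$ in place of $X$ and $X$ in place of $Y$) to handle the pushforward by $\phi_j$, the composition with $\rho_j$, and the normality of the final join. Your write-up is in fact slightly more detailed than the paper's, spelling out explicitly why the finite supremum remains usc and compactly supported.
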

\begin{proof}
From Proposition~\ref{cartesian product is usc and normal}, we get $u_{0}\times ... \times u_{m-1}\in \mathcal{F}_{X^m}^{*}$ because $u_{0}, ..., u_{m-1}  \in \mathcal{F}_{X}^{*}$, and from Proposition~\ref{compose usc plus cont}  $\phi_{j}(u_{0}\times ... \times u_{m-1})$ is normal,  compactly supported  and usc. Moreover, $\rho_{j}(\phi_{j}(u_{0}\times ... \times u_{m-1}))$ is usc and compactly supported  because each $\rho_{j}$ is ndrc (see Lemma~\ref{general properties of fuzzy composition}  taking $X^m$ and $X$). Thus $\mathcal{Z_S} (u_{0}, ..., u_{m-1}) \in \mathcal{F}_{X}^{*}$ because the family $\rho_{j}$ is admissible implies that $\mathcal{Z_S} (u_{0}, ..., u_{m-1})$ is normal  (see Lemma  \ref{general properties of fuzzy composition}(d)).
\end{proof}

Next we consider the complete metric space $((\mathcal{F}_{X}^*)^m, d_\infty^{m})$, where $d_\infty^m$ is defined as in (\ref{d^m}), that is
$$d_{\infty}^m ((u_{0}, ..., u_{m-1}), (v_{0}, ..., v_{m-1}))=\max_{i=0,...,m-1} d_{\infty}(u_{i}, v_{i}).$$

\begin{lemma}\label{alpha cut cartesian product}  Given $\alpha \in [0,1]$ we get
$$
  \begin{cases}
    [(\times_{i}  u_{i})]^{\alpha}=\times_{i} [ u_{i}]^{\alpha}, &\mathrm{     \quad if } \quad \alpha >0; \\
    [(\times_{i}  u_{i})]^{0}=\times_{i} [ u_{i}]^{0}=\lim_{\beta_n \to 0}\times_{i} [ u_{i}]^{\beta_n}, & \quad \mathrm{ if } \quad\alpha =0  \quad \mathrm{ and  } \quad \beta_n>0.
  \end{cases}
$$
\end{lemma}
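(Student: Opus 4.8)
The plan is to handle the two cases separately, exploiting the computations already established in Proposition \ref{cartesian product is usc and normal} and Corollary \ref{f2}. For $\alpha>0$, I would observe that the identity $[\btimes_i u_i]^\alpha = \btimes_i [u_i]^\alpha$ is exactly the claim that was proved in the course of proving Proposition \ref{cartesian product is usc and normal}(b): indeed, applying that argument with $c=\alpha$ gives $(\btimes_i u_i)^{-1}([\alpha,+\infty]) = u_0^{-1}([\alpha,+\infty])\times\cdots\times u_{m-1}^{-1}([\alpha,+\infty])$, and since $0\le u_i\le 1$ this set is precisely $[u_0]^\alpha\times\cdots\times[u_{m-1}]^\alpha$. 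So the first case is essentially a restatement; I would just spell out the chain of equalities $x=(x_0,\dots,x_{m-1})\in[\btimes_i u_i]^\alpha \iff \bigwedge_i u_i(x_i)\ge\alpha \iff u_i(x_i)\ge\alpha \text{ for all } i \iff x\in\btimes_i[u_i]^\alpha$.

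For $\alpha=0$ I would argue in two steps. First, $\btimes_i[u_i]^0 = \lim_{\beta_n\to 0}\btimes_i[u_i]^{\beta_n}$ for any decreasing positive sequence $\beta_n\to 0$: by Corollary \ref{f2} each $[u_i]^{\beta_n}\to[u_i]^0$ in the Hausdorff metric, and the Cartesian product of sets with the maximum metric is continuous with respect to the Hausdorff distance — more concretely, $h^m(A_0\times\cdots\times A_{m-1},\,B_0\times\cdots\times B_{m-1}) = \max_i h(A_i,B_i)$, which I would state and use. Combined with the monotonicity $[u_i]^{\beta_{n}}\subseteq[u_i]^{\beta_{n+1}}$ and Lemma \ref{convergence of compact subsets}, this gives $\btimes_i[u_i]^{\beta_n}\nearrow \overline{\bigcup_n \btimes_i[u_i]^{\beta_n}}$, and this increasing union equals $\btimes_i[u_i]^0$ (here one uses that the $[u_i]^0$ are compact, hence the product is closed, and that $\bigcup_n [u_i]^{\beta_n}$ is dense in $[u_i]^0$ by Remark \ref{grey level'}, so the product of the unions is dense in the product of the closures). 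Second, I need $[\btimes_i u_i]^0 = \btimes_i[u_i]^0$: using the first case, $[\btimes_i u_i]^0 = \overline{\bigcup_n [\btimes_i u_i]^{\beta_n}} = \overline{\bigcup_n \btimes_i [u_i]^{\beta_n}}$, which we have just identified with $\btimes_i[u_i]^0$.

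The main obstacle I anticipate is the density/closure bookkeeping in the $\alpha=0$ case — specifically, checking carefully that $\overline{\bigcup_n (A_0^{(n)}\times\cdots\times A_{m-1}^{(n)})} = \overline{A_0}\times\cdots\times\overline{A_{m-1}}$ when each $A_i^{(n)}\nearrow$ with $\overline{\bigcup_n A_i^{(n)}}=\overline{A_i}$ and the $\overline{A_i}$ are compact. The inclusion $\subseteq$ is immediate; for $\supseteq$ one picks $(x_0,\dots,x_{m-1})$ with $x_i\in\overline{A_i}$, approximates each $x_i$ by a point of some $A_i^{(k_i)}$, and then uses nestedness to push all indices up to a common $n=\max_i k_i$, landing in $A_0^{(n)}\times\cdots\times A_{m-1}^{(n)}$ within the prescribed tolerance (in the maximum metric). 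Everything else reduces to the two-line equalities above plus the cited results, so the write-up should be short.
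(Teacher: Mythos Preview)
Your proposal is correct. For $\alpha>0$ you do exactly what the paper does (invoke the computation from Proposition~\ref{cartesian product is usc and normal}(b)). For $\alpha=0$ your route differs from the paper's: the paper proves the set equality $[\btimes_i u_i]^0=\btimes_i[u_i]^0$ first by a bare-hands double inclusion with sequences (the reverse inclusion uses exactly the index-alignment trick you describe in your ``main obstacle'' paragraph, taking $\gamma_k=\min_i\alpha_k^i$), and only then remarks that the Hausdorff-limit equality is obvious. You instead go through the identity $h^m(\btimes_i A_i,\btimes_i B_i)=\max_i h(A_i,B_i)$ to get the limit first and then read off the set equality via Remark~\ref{grey level'} and the $\alpha>0$ case. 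Your approach is slightly slicker once that product--Hausdorff identity is in hand, but note that in the paper this identity is Lemma~\ref{property hausdorff cartesian product}, stated \emph{after} the present lemma; its proof is independent, so there is no circularity, but you would need to reorder or inline it. Also, once you have Step~1 via the product--Hausdorff identity, your ``main obstacle'' is already dissolved: the nested increasing sequence $\btimes_i[u_i]^{\beta_n}$ converges in Hausdorff to $\btimes_i[u_i]^0$, and for nested increasing compacta the Hausdorff limit equals the closure of the union, so no separate density bookkeeping is needed.
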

\begin{proof} With $\alpha>0$ we can deal similarly as in Proposition~\ref{cartesian product is usc and normal}, b), and obtain $$[(\times_{i}  u_{i})]^{\alpha}= (\times_{i}  u_{i})^{-1}([\alpha, +\infty])= \times_{i}  (u_{i})^{-1}([\alpha, +\infty])=\times_{i} [u_{i}]^{\alpha}.$$

For the second part, we show first $\times_{i} [ u_{i}]^{0}= [(\times_{i}  u_{i})]^{0}$.

Take $(x)_i \in [\times_{i} u_{i}]^{0}$ then there is a sequence $((y^k)_i)$ such that $(y^k)_i \to (x)_i$ with $(y^k)_i \in [\times_{i} u_{i}]^{\alpha_k}= \times_{i} [u_{i}]^{\alpha_k}$ for some $\alpha_k\searrow 0$. By the properties of the product topology, each coordinate $y^k_i \to x_i \in [u_{i}]^{0}$, so $(x)_i \in \times_{i}[u_{i}]^{0}$. Reciprocally, if $(x)_i \in \times_{i}[ u_{i}]^{0}$ there for every $i=0,...,m-1$ there is a sequence $(y_i^k)$ such that $y^k_i \to x_i$ and $y^k_i \in [u_{i}]^{\alpha_k^i}$. Take $\gamma_k:=\min_{i} \alpha_k^i >0$ and consider the sequence $((y^k)_i)$. We claim that $(y^k)_i \in\times_{i} [u_{i}]^{\gamma_k}= [\times_{i} u_{i}]^{\gamma_k}$. Indeed, $y^k_i \in [u_{i}]^{\alpha_k^i} \subseteq [u_{i}]^{\gamma_k}$ for $i=0,...,m-1$. Since $\displaystyle (x)_i= \lim_{k\to\infty} (y^k)_i$ we get $(x)_i \in [\times_{i} u_{i}]^{0}$.

Obviously $\displaystyle \times_{i} [ u_{i}]^{0}=\lim_{k \to \infty}\times_{i} [ u_{i}]^{\beta_k}$ for all $\beta_k\searrow 0$ because of the considered topology on $\mathcal{K}^*(X^m)$.
\end{proof}

\begin{remark}\label{f3'}\emph{
By the above Lemma, Proposition \ref{alpha cut compose with ndrc rho} and Lemma \ref{general properties of fuzzy composition}, we have that if $\mathcal{Z}_\mathcal{S}$ is a GIFZS and $u_0,...,u_{m-1}\in\mathcal{F}_X^*$, then for every $\alpha\in[0,1]$,
$$
\left[\mathcal{Z}_\mathcal{S}(\times_i u_i)\right]^\alpha=\bigcup_{j=0,...,n-1}\phi_j([\rho_j(\times_iu_i)]^\alpha)
$$
and if $\alpha\in(0,\rho_j(1)]$, then (note that since $\rho_j(0)=0$, be have $\beta_j(\alpha)>0$)
$$
[\rho_j(\times_iu_i)]^\alpha=[\times_iu_i]^{\beta_j(\alpha)}=\times_i[u_i]^{\beta_j(\alpha)}
$$
and if $\alpha=0$, then\\
$$
[\rho_j(\times_iu_i)]^0=\times_i[u_i]^{r^j_+}
$$ provided $\rho_j(r^j_+)>0$, and
$$
[\rho_j(\times_iu_i)]^0=\overline{\bigcup_{\alpha>r^j_+}[\times_iu_i]^\alpha}=\overline{\bigcup_{\alpha>r^j_+}\times_i[u_i]^\alpha}=\times_i\overline{\bigcup_{\alpha>r^j_+}[u_i]^\alpha}=\overline{\bigcup_{\alpha>r^j_+}[u_0]^\alpha}\times...\times \overline{\bigcup_{\alpha>r^j_+}[u_{m-1}]^\alpha}
$$provided $\rho_j(r^j_+)=0$.}
\end{remark}

The next lemma shows the relationship between the Hausdorff distance on $\mathcal{K}^*(X^m)$ and the maximum distance on $\mathcal{K}^*(X)^m$.
\begin{lemma} \label{property hausdorff cartesian product} Let $A_{0},..., A_{m-1}, B_{0},..., B_{m-1} \in \mathcal{K}^{*}(X)$ then
$$ h(\btimes_{i} A_{i}, \btimes_{i} B_{i})  = \max_{i=0,...,m-1} h(A_{i}, B_{i}).$$
\end{lemma}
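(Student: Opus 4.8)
The plan is to work directly from the definition of the Hausdorff distance as the maximum of two one-sided excesses and to reduce the product estimate to a coordinatewise computation of point-to-set distances in $(X^m,d^m)$. For sets $H,K$ write $e(H,K):=\sup_{x\in H}d'(x,K)$ for the one-sided excess (with $d'$ the relevant point-to-set distance), so that $h(H,K)=\max\bigl(e(H,K),e(K,H)\bigr)$; it will then be enough to evaluate $e$ on Cartesian products. Note first that $\btimes_iA_i$ and $\btimes_iB_i$ lie in $\mathcal K^*(X^m)$, being finite products of nonempty compact sets, so every quantity below is finite.

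\textbf{Step 1: point-to-set distances on $X^m$.} For $x=(x_0,\dots,x_{m-1})\in X^m$ and $B_0,\dots,B_{m-1}\in\mathcal K^*(X)$ I would prove
\[
d^m\bigl(x,\btimes_i B_i\bigr)=\max_{i}d(x_i,B_i).
\]
The inequality ``$\ge$'' is immediate: for any $y=(y_0,\dots,y_{m-1})\in\btimes_iB_i$ and any index $j$, $d^m(x,y)=\max_id(x_i,y_i)\ge d(x_j,y_j)\ge d(x_j,B_j)$, so taking the infimum over $y$ gives $d^m(x,\btimes_iB_i)\ge\max_j d(x_j,B_j)$. For ``$\le$'', given $\varepsilon>0$ choose $y_i\in B_i$ with $d(x_i,y_i)<d(x_i,B_i)+\varepsilon$ for each $i$; then $d^m(x,y)=\max_id(x_i,y_i)<\max_id(x_i,B_i)+\varepsilon$, and letting $\varepsilon\to0$ finishes it. This coordinatewise identity, which is the only place where the product structure of $d^m$ enters, is the one genuinely metric ingredient; everything else is bookkeeping.

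\textbf{Steps 2--3: from points to the Hausdorff distance.} Since $\btimes_iA_i$ is again a product, Step 1 gives
\[
e\bigl(\btimes_iA_i,\btimes_iB_i\bigr)=\sup_{x\in\btimes_iA_i}\max_i d(x_i,B_i)=\max_i\sup_{x_i\in A_i}d(x_i,B_i)=\max_i e(A_i,B_i),
\]
where the middle equality is the analogous interchange of a supremum over a Cartesian product with a maximum (``$\ge$'': fix an index $j$, take $x_j\in A_j$ nearly realizing $\sup_{x_j\in A_j}d(x_j,B_j)$ and arbitrary entries in the other coordinates; ``$\le$'': $\max_id(x_i,B_i)\le\max_i e(A_i,B_i)$ for all admissible $x$). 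Applying this also with the roles of the $A_i$'s and $B_i$'s exchanged and combining,
\[
h\bigl(\btimes_iA_i,\btimes_iB_i\bigr)=\max\Bigl(\max_i e(A_i,B_i),\,\max_i e(B_i,A_i)\Bigr)=\max_i\max\bigl(e(A_i,B_i),e(B_i,A_i)\bigr)=\max_i h(A_i,B_i),
\]
which is the claim. The only non-formal points are the two uses of ``infimum (resp.\ supremum) of a maximum over a product equals the maximum of the coordinatewise infima (resp.\ suprema)'', each disposed of by the $\varepsilon$-approximation above; I expect this to be the main, though minor, obstacle. Alternatively one can avoid those approximations altogether: Step 1 shows the closed $\varepsilon$-neighborhood in $(X^m,d^m)$ splits as $(\btimes_iA_i)_\varepsilon=\btimes_i(A_i)_\varepsilon$, hence the pair of inclusions $\btimes_iA_i\subseteq(\btimes_iB_i)_\varepsilon$ and $\btimes_iB_i\subseteq(\btimes_iA_i)_\varepsilon$ holds if and only if $A_i\subseteq(B_i)_\varepsilon$ and $B_i\subseteq(A_i)_\varepsilon$ for every $i$, i.e.\ if and only if $\max_i h(A_i,B_i)\le\varepsilon$; taking the infimum over such $\varepsilon$ and using the characterization $h(H,K)=\inf\{\varepsilon>0:H\subseteq K_\varepsilon,\,K\subseteq H_\varepsilon\}$ recalled earlier gives the result at once.
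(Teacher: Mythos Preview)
Your proof is correct. Your main route---computing the point-to-set distance $d^m(x,\btimes_iB_i)=\max_id(x_i,B_i)$, lifting this to the one-sided excess $e(\btimes_iA_i,\btimes_iB_i)=\max_ie(A_i,B_i)$, and then combining the two excesses---is a mild reorganization of the paper's argument rather than a genuinely different idea. The paper works instead with the $\varepsilon$-neighborhood characterization: it proves $(\btimes_iA_i)_\varepsilon=\btimes_i(A_i)_\varepsilon$ directly (this is equivalent to your Step~1, since $x\in(\btimes_iA_i)_\varepsilon\iff\max_id(x_i,A_i)\le\varepsilon$) and then reads off that the two mutual inclusions defining $h$ split coordinatewise. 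This is precisely the alternative you sketch in your final paragraph, so you have in effect reproduced the paper's proof there. Your excess-based version is slightly more informative in that it shows the one-sided distances themselves factor as a maximum, not just the symmetric Hausdorff distance; the paper's version is marginally shorter since it bypasses the $\varepsilon$-approximation entirely.
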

\begin{proof} We recall that $U_{\varepsilon}=\{ x \in X\; | \; d(x, U)\leq \varepsilon\}$, and
$h(U,V)= \inf\{\varepsilon >0\; | \;U \subseteq V_{\varepsilon},  V \subseteq U_{\varepsilon}\}.$
We claim that $(\btimes_{i} A_{i})_{\varepsilon}= \btimes_{i} (A_{i})_{\varepsilon}$. Indeed, given $(z)_i \in (\btimes_{i} A_{i})_{\varepsilon}$ we get
$$\max_{i}d(z_i , x_i)= d_{m}((z)_i , (x)_i) \leq \varepsilon, \; \mathrm{\;for\;some\;} (x)_i \in \btimes_{i} A_{i}$$
that is $d(z_i , x_i)\leq \varepsilon$ and $x_i\in A_i$ for all $i=0,...,m-1$.
Thus $(z)_i \in \btimes_{i} (A_{i})_{\varepsilon}$. So $(\btimes_{i} A_{i})_{\varepsilon} \subseteq \btimes_{i} (A_{i})_{\varepsilon}$.  The reciprocal is evident, because $d_{m}((z)_i , (x)_i)= \max_{i}d(z_i , x_i)$.

Take $\varepsilon >0$ such that $\btimes_{i} A_{i} \subseteq (\btimes_{i} B_{i})_{\varepsilon}$ and $ \btimes_{i} B_{i} \subseteq (\btimes_{i} A_{i})_{\varepsilon}$. Then $\btimes_{i} A_{i} \subseteq \btimes_{i} (B_{i})_{\varepsilon}$ and $ \btimes_{i} B_{i} \subseteq \btimes_{i} (A_{i})_{\varepsilon}$. Using the properties of the Cartesian product we get $ A_{i} \subseteq  (B_{i})_{\varepsilon}, \; \forall i$ and $  B_{i} \subseteq  (A_{i})_{\varepsilon}, \; \forall i$. Thus,  $h(A_{i}, B_{i}) \leq \varepsilon$ for $i=0,...,m-1$, so $\max_{i=0,...,m-1} h(A_{i}, B_{i})\leq \varepsilon$. Reverting the reasoning above we get $ h(\btimes_{i} A_{i}, \btimes_{i} B_{i})  = \max_{i=0,...,m-1} h(A_{i}, B_{i}).$
\end{proof}

\begin{lemma}\label{metric cartesian product} The mapping $\psi:((\mathcal{F}_{X}^*)^m, d_{\infty}^m) \to  (\mathcal{F}_{X^m}^{*}, d_{\infty})$ given by $$\psi(u_{0}, ..., u_{m-1}):= u_{0}\times ...\times u_{m-1}$$ is isometry, that is
$$d_{\infty}(\psi(u_{0}, ..., u_{m-1}),\psi(v_{0}, ..., v_{m-1})) = d_\infty^{m}((u_{0}, ..., u_{m-1}),(v_{0}, ..., v_{m-1})).$$
\end{lemma}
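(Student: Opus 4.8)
The plan is to compute $d_\infty(\psi((u)_i),\psi((v)_i))$ directly by unwinding the definition of the $d_\infty$-metric through $\alpha$-cuts and then invoking the two preceding lemmas, which were set up precisely for this purpose. First I would note that $\psi$ is well defined into $\mathcal{F}_{X^m}^*$ by Proposition~\ref{cartesian product is usc and normal}. Then, using Corollary~\ref{f2}, write
$$
d_\infty\bigl(\psi((u)_i),\psi((v)_i)\bigr)=\sup_{\alpha\in(0,1]}h\bigl([\btimes_i u_i]^\alpha,[\btimes_i v_i]^\alpha\bigr).
$$

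Next I would apply Lemma~\ref{alpha cut cartesian product}, which for $\alpha>0$ gives $[\btimes_i u_i]^\alpha=\btimes_i[u_i]^\alpha$ and $[\btimes_i v_i]^\alpha=\btimes_i[v_i]^\alpha$, turning the expression into $\sup_{\alpha\in(0,1]}h(\btimes_i[u_i]^\alpha,\btimes_i[v_i]^\alpha)$. Then Lemma~\ref{property hausdorff cartesian product} converts each Hausdorff distance of a product into a maximum of coordinatewise Hausdorff distances, yielding $\sup_{\alpha\in(0,1]}\max_{i}h([u_i]^\alpha,[v_i]^\alpha)$. Since the maximum is over the finite index set $\{0,\dots,m-1\}$, it commutes with the supremum over $\alpha$, so this equals $\max_i\sup_{\alpha\in(0,1]}h([u_i]^\alpha,[v_i]^\alpha)$, and a final application of Corollary~\ref{f2} identifies the inner supremum with $d_\infty(u_i,v_i)$. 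Hence the whole chain collapses to $\max_i d_\infty(u_i,v_i)=d_\infty^m((u)_i,(v)_i)$, which is exactly the claimed isometry.

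Honestly I do not expect a serious obstacle here: the content has already been isolated into Lemmas~\ref{alpha cut cartesian product} and~\ref{property hausdorff cartesian product} and Corollary~\ref{f2}, and the remaining work is the elementary interchange of $\sup_\alpha$ and a finite $\max_i$ (which holds for any bounded family of reals). The one point that deserves a word of care is whether to phrase $d_\infty$ as a supremum over $[0,1]$ or over $(0,1]$: I would use the $(0,1]$ form from Corollary~\ref{f2} so as to stay within the regime where Lemma~\ref{alpha cut cartesian product} gives the clean identity $[\btimes_i u_i]^\alpha=\btimes_i[u_i]^\alpha$, avoiding the need to separately treat the $0$-cut (although, by the second part of Lemma~\ref{alpha cut cartesian product}, the $0$-cut identity also holds, so either route works). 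Being an isometry, $\psi$ is in particular injective and continuous, a fact that will be convenient when transporting the contraction arguments from $\mathcal{F}_{X^m}^*$ back to $(\mathcal{F}_X^*)^m$.
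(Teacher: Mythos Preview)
Your proposal is correct and follows essentially the same route as the paper: apply Lemma~\ref{alpha cut cartesian product} to split the $\alpha$-cut of the product, Lemma~\ref{property hausdorff cartesian product} to turn the Hausdorff distance of a product into a coordinatewise maximum, and then interchange the finite $\max_i$ with $\sup_\alpha$. The only cosmetic difference is that the paper takes the supremum over $[0,1]$ (relying on the $\alpha=0$ case of Lemma~\ref{alpha cut cartesian product}) whereas you restrict to $(0,1]$ via Corollary~\ref{f2}; as you yourself note, both routes work.
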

\begin{proof}
By definition
$$d_{\infty}(\psi((u)_i),\psi((v)_i)) =\sup_{\alpha \in [0,1]} h\left([\times_{i}  u_{i}]^{\alpha}, [\times_{i}  v_{i}]^{\alpha}\right)$$

From Lemma~\ref{alpha cut cartesian product} we have $[\times_{i}  u_{i}]^{\alpha}= \times_{i}[u_{i}]^{\alpha}$ and from Lemma~\ref{property hausdorff cartesian product} we get, for any $\alpha \in [0,1]$,
$$
  h\left([\times_{i}  u_{i}]^{\alpha}, [\times_{i}  v_{i}]^{\alpha}\right) = h\left(\times_{i} [ u_{i}]^{\alpha}, \times_{i} [ v_{i}]^{\alpha}\right)\\
   =  \max_{i=0,...,m-1} h\left( [ u_{i}]^{\alpha}, [ v_{i}]^{\alpha}\right).
$$
So
\begin{align*}
d_{\infty}(\psi((u)_i),\psi((v)_i)) & = \sup_{\alpha \in [0,1]} \max_{i=0,...,m-1} h\left( [ u_{i}]^{\alpha}, [ v_{i}]^{\alpha}\right)\\
& = \max_{i=0,...,m-1} \sup_{\alpha \in [0,1]} h\left( [ u_{i}]^{\alpha}, [ v_{i}]^{\alpha}\right)\\
&=  \max_{i=0,...,m-1} d_{\infty}(u_i,v_i)\\
&= \max_{i} d_{\infty}(u_i,v_i)\\
&= \quad d_\infty^{m}((u_{0}, ..., u_{m-1}),(v_{0}, ..., v_{m-1})).\\
\end{align*}

\end{proof}
\begin{lemma}\label{rho composite with fuzzy cut} Let $(A, d)$ be a metric space and $\rho$ an ndrc  grey level map  with $\rho(0)=0$.  Then the map induced by $\rho$ is nonexpansive, that is,
$$d_{\infty}(\rho(u), \rho(v)) \leq d_{\infty}(u,v),$$
for any $u,v \in \mathcal{F}_{A}^*$.
\end{lemma}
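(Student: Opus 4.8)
The plan is to reduce the claim to a pointwise comparison of $\alpha$-cuts, using the description of $[\rho(u)]^\alpha$ provided by Proposition~\ref{alpha cut compose with ndrc rho} together with the function $\beta$ defined there. Recall that $d_\infty(\rho(u),\rho(v))=\sup_{\alpha\in(0,1]}h([\rho(u)]^\alpha,[\rho(v)]^\alpha)$ by Corollary~\ref{f2}, so it suffices to bound $h([\rho(u)]^\alpha,[\rho(v)]^\alpha)$ by $d_\infty(u,v)$ for every fixed $\alpha\in(0,1]$.

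First I would fix $\alpha\in(0,1]$ and split into cases according to Proposition~\ref{alpha cut compose with ndrc rho}(b). If $\alpha>\rho(1)$, then $[\rho(u)]^\alpha=[\rho(v)]^\alpha=\emptyset$ and there is nothing to prove (or this case is vacuous since we only need nonempty cuts; since $\rho$ is nonzero, $\rho(1)>0$, and for $\alpha\le\rho(1)$ the cuts are nonempty). If $\alpha\le\rho(1)$, then since $\rho(0)=0$ we have $\beta(\alpha)>0$ (as noted in Remark~\ref{f3'}), and Proposition~\ref{alpha cut compose with ndrc rho}(b) gives
$$
[\rho(u)]^\alpha=[u]^{\beta(\alpha)},\qquad [\rho(v)]^\alpha=[v]^{\beta(\alpha)}.
$$
Hence $h([\rho(u)]^\alpha,[\rho(v)]^\alpha)=h([u]^{\beta(\alpha)},[v]^{\beta(\alpha)})$, and since $\beta(\alpha)\in(0,1]$, this is at most $\sup_{\gamma\in(0,1]}h([u]^\gamma,[v]^\gamma)=d_\infty(u,v)$ by Corollary~\ref{f2} again.

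Taking the supremum over $\alpha\in(0,1]$ then yields $d_\infty(\rho(u),\rho(v))\le d_\infty(u,v)$, which is the assertion; one should also remark that $\rho(u),\rho(v)\in\mathcal F_A^*$ so that $d_\infty$ makes sense, but this follows from Proposition~\ref{equivalence ndrc} for usc-ness and compact support together with the admissibility/normality considerations already used for the GFHB operator (or simply from $\rho(1)>0$ and normality of $u,v$; if $\rho(1)<1$ one works instead with $\rho(u)$ being compactly supported and usc, which is all that is needed, as normality of $\rho(u)$ is not required for $d_\infty$ to be defined — although for the statement to live in $\mathcal F_A^*$ one does want $\rho(1)=1$, and in the intended applications the $\rho_j$'s are admissible). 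The only mildly delicate point is bookkeeping around the case $\alpha=0$: rather than treating it directly one invokes Corollary~\ref{f2} to replace $\sup_{\alpha\in[0,1]}$ by $\sup_{\alpha\in(0,1]}$ on both sides, so the $0$-cut never needs to be handled separately. I do not expect any real obstacle here; the substance of the argument is entirely contained in Proposition~\ref{alpha cut compose with ndrc rho}(b), and the proof is a two-line case analysis once that is invoked.
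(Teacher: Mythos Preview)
Your proposal is correct and follows essentially the same route as the paper: both invoke Corollary~\ref{f2} to restrict to $\alpha\in(0,1]$, then use Proposition~\ref{alpha cut compose with ndrc rho}(b) (together with $\rho(0)=0\Rightarrow\beta(\alpha)>0$) to rewrite $[\rho(u)]^\alpha=[u]^{\beta(\alpha)}$ and bound the Hausdorff distance by $d_\infty(u,v)$. Your additional remarks about whether $\rho(u)\in\mathcal F_A^*$ when $\rho(1)<1$ raise a fair point the paper glosses over, but as you note this does not affect the argument.
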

\begin{proof}  By Corollary \ref{f2}, we have $\displaystyle d_{\infty}(\rho(u), \rho(v))= \sup_{0<\alpha \leq 1} h([\rho(u)]^{\alpha}, [\rho(v)]^{\alpha})$.

Take $0<\alpha \leq 1$. From Proposition~\ref{alpha cut compose with ndrc rho} we know that $[\rho(u)]^{\alpha}=\emptyset$ if $\alpha>\rho(1)$ and if $\alpha\in(0,\rho(1)]$, then $[\rho(u)]^{\alpha}=[u]^{\beta(\alpha)}$, where $\beta:[0,\rho(1)] \to [0,1]$, given by
$$\beta(\alpha)=\inf \{ t \; | \; \rho(t) \geq \alpha\}$$
is well defined and nondecreasing. Then
$$h([\rho(u)]^{\alpha}, [\rho(v)]^{\alpha})= h([u]^{\beta(\alpha)}, [v]^{\beta(\alpha)}) \leq d_{\infty}(u,v),$$
hence $$d_{\infty}(\rho(u), \rho(v)) \leq d_{\infty}(u,v).$$
\end{proof}

\begin{definition}\emph{
A fuzzy set $u\in \mathcal{F}_{X}^{*}$  is called} a generalized fuzzy fractal of a GIFZS $\mathcal{Z_S}=(X, (\phi_j), (\rho_{j}))_{j=0,...,n-1}$\emph{ if $\mathcal{Z_S}(u,...,u)=u$, that is
$$u= \bigvee_{j=0...n-1} \rho_{j}(\phi_{j}(\btimes_{i=0}^{m-1}u)).$$
}
\end{definition}

\begin{example} Consider $\mathcal{Z_S}=(\mathbb{S}^1 =\mathbb{R}/\mathbb{Z}\simeq [0,1], (\phi_j)_{j=0,1}, (\rho_{j})_{j=0,1}),$  with the admissible grey level maps $\rho_{j}(t):=t, \; t \in [0,1]$ and the Lipschitz maps given by $\phi_j(x,y):=\frac{1}{2} x + \frac{j}{2}, \; j=0,1$. In this case,
$$\phi_{j}(u \times v)(z)= \left\{
                                  \begin{array}{ll}
                                   \sup_{y \in [0,1], \;\frac{1}{2} x + \frac{j}{2} = z} u(x) \wedge v(y), &\text{ if } \exists x \;s.t. \;\frac{1}{2} x + \frac{j}{2} = z\\
                                    0,  & \text{otherwise}
                                  \end{array}
                                \right.
$$

Since $[0, 1]=[v]^{u(2z-j)} \cup [0, 1]\backslash [v]^{u(2z-j)} $, for $y \in [v]^{u(2z-j)}$ we get $ v(y) \geq  u(2z-j)$ so $u(2z-j) \wedge v(y)=u(2z-j)$. Analogously, if $y \in [0, 1] \backslash [v]^{u(2z-j)}$ we get $ v(y)< u(2z-j)$ so $u(2z) \wedge v(y)=v(y)$. Thus
$$\sup_{\frac{1}{2} x + \frac{j}{2} = z, \; y\in [0, 1]} \{ u(x) \wedge v(y) \}= \sup_{y\in [0, 1]} \{ u(2z -j) \wedge v(y)\}= u(2z -j).$$
For $j=0$
$$\phi_{0}(u \times v)(z)= \left\{
                                  \begin{array}{ll}
                                   u(2z), &\text{ if }  z \in [0, 1/2]\\
                                    0,  & \text{ if }  z \in (1/2,1]
                                  \end{array}
                                \right.
$$
For $j=1$
$$\phi_{1}(u \times v)(z)= \left\{
                                  \begin{array}{ll}
                                    0,  & \text{ if }  z \in [0,1/2)\\
                                    u(2z-1), &\text{ if }  z \in [1/2,1]
                                  \end{array}
                                \right.
$$

We  point out that $u(2 (1/2)  ) \vee u(2 (1/2) -1)=u(1) \vee u(0)$, but $0=1$ in $\mathbb{S}^1$.

From $$\mathcal{Z_S}(u,v)= \bigvee_{j=0,1} \phi_{j}(u \times v)$$ we get
$$\mathcal{Z_S}(u,v) (z)= u(T(z)) ,\; \; T(z):=2z \mod 1.$$

Obviously $\mathcal{Z_S}(1,1)=1$. We claim that this is the unique fixed point of $\mathcal{Z}_\mathcal{S}$ in $\mathcal{F}^*_X$, i.e.,
$\mathcal{Z_S}(u,u) (z)=u(z)$. Indeed, let $u\in \mathcal{F}_{X}^{*}$ be such that  $u(z)=u(T(z)),\; \forall  z \in [0, 1]$. Since $u$ is normal, there exists a point $a \in [0, 1]$ such that $u(a)= 1$. We define $\Gamma:=\bigcup_{n \in \mathbb{N}} T^{-n}(a)$. Obviously, $u(T^{-n}(a))=u(a), \; \forall n$. We claim that $\Gamma$ is dense in $[0,1]$. To see this, we take any $x \in [0,1]$ and let $x_{n}= d_{n-1} 2^{-1} + d_{n-2} 2^{-2} + d_{n-3} 2^{-3} + ....+d_{0} 2^{-n+1}, \; d_j \in \{0,1\}$, be a base $2$ truncated expansion of $x$. It is easy to see that $$y_{n}=x_{n} + a 2^{-n} = d_{n-1} 2^{-1} + d_{n-2} 2^{-2} + d_{n-3} 2^{-3} + ....+d_{0} 2^{-n+1} + a 2^{-n} \in \Gamma,$$ that is $T^{n}(y_{n})=a$, so $y_n \to x$. From the upper semicontinuity  of $u$ we get $\displaystyle u(x) \geq \limsup_{n \to \infty} u(y_{n}) = u(a)$. Thus $\displaystyle 1=u(a)= \min_{x \in [0,1]} u(x) \leq 1$. From this, we get $u\equiv 1$.
\end{example}
\begin{remark}\emph{
Actually, in any similar case, that is, $\mathcal{Z_S}=(X, (\phi_j)_{j=0,1}, (\rho_{j})_{j=0,1}),$  with $\rho_{j}(t)=t, \; t \in [0,1]$, $\bigcup_{j=0,1}\phi_j(X\times X)=X$ and $X$ compact, we get
$$\mathcal{Z_S}(1,1)(z)=\bigvee_{j=0,1} \phi_{j}(1 \times 1)(z)=\bigvee_{j=0,1} \sup_{(x,y) \in \phi_{j}^{-1}(z)}(1(x) \wedge 1(y)) =1,$$
because the property $\displaystyle\bigcup_{j=0,1}\phi_j(X\times X)=X$ implies that $\phi_{j}^{-1}(z)$ is never an empty set.
}\end{remark}

Now we prove that the operator $\mathcal{Z_S}$ satisfies the same contractive conditions as mappings from $\mathcal{S}$. Especially, this extends the first parts of Theorems \ref{cabrelli fixed point fractal operator} and \ref{GHB fuzzy attract exist and uniq}.

\begin{theorem} \label{Fractal operator is contractive}
Let $\mathcal{Z_S}=(X,(\phi_j)_{j=0,...,n-1},(\rho_j)_{j=0,...,n-1})$ be a Matkowski contractive GIFZS of degree $m$. Then the (FGHB) operator $\mathcal{Z_S}: (\mathcal{F}_{X}^*)^m \to \mathcal{F}_{X}^*$ is a generalized Matkowski contraction. Its witness is the function $\varphi:=\max_{j=0,...,n-1}\varphi_j$ where $\varphi_j$, $j=0,...,n-1$, are witnesses for $\phi_j,\;j=1,...,n-1$, respectively.\\
In particular, if $\mathcal{Z_S}$ is Lipschitz contractive, then the (FGHB) operator $\mathcal{Z_S}$ is also Lipschitz contractive with $Lip(\mathcal{Z_S})\leq\max_{j=0,...,n-1}Lip(\phi_j)<1$.
\end{theorem}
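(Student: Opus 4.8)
The plan is to reduce everything to the metric/isometry machinery already assembled. First I would fix $u_0,\dots,u_{m-1},v_0,\dots,v_{m-1}\in\mathcal F_X^*$ and set $U:=\btimes_i u_i$, $V:=\btimes_i v_i\in\mathcal F_{X^m}^*$. By Lemma~\ref{metric cartesian product} the map $\psi$ is an isometry, so
$$d_\infty(U,V)=d_\infty^m((u)_i,(v)_i).$$
The goal $d_\infty(\mathcal Z_\mathcal S((u)_i),\mathcal Z_\mathcal S((v)_i))\leq\varphi\bigl(d_\infty^m((u)_i,(v)_i)\bigr)$ therefore becomes
$$d_\infty\Bigl(\bigvee_j\rho_j(\phi_j(U)),\bigvee_j\rho_j(\phi_j(V))\Bigr)\leq\varphi\bigl(d_\infty(U,V)\bigr),$$
i.e.\ exactly the same statement one proves for the classical FHB operator on $\mathcal F_{X^m}^*$, but with the single maps $\phi_j:X^m\to X$ being generalized Matkowski contractions rather than Banach contractions. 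So the theorem is really: \emph{the FHB operator attached to the IFZS-like data $(X^m\rightrightarrows X,(\phi_j),(\rho_j))$ is a generalized Matkowski contraction with witness $\max_j\varphi_j$.}

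Second, I would compute with $\alpha$-cuts. By Corollary~\ref{f2} it suffices to bound $h\bigl([\mathcal Z_\mathcal S((u)_i)]^\alpha,[\mathcal Z_\mathcal S((v)_i)]^\alpha\bigr)$ for $\alpha\in(0,1]$. Using Remark~\ref{f3'} (or Lemma~\ref{general properties of fuzzy composition}(c)),
$$[\mathcal Z_\mathcal S((u)_i)]^\alpha=\bigcup_{j=0}^{n-1}\phi_j\bigl([\rho_j(U)]^\alpha\bigr),\qquad [\mathcal Z_\mathcal S((v)_i)]^\alpha=\bigcup_{j=0}^{n-1}\phi_j\bigl([\rho_j(V)]^\alpha\bigr).$$
By Proposition~\ref{filip1}(ii), $h$ of the unions is $\leq\max_j h\bigl(\phi_j([\rho_j(U)]^\alpha),\phi_j([\rho_j(V)]^\alpha)\bigr)$. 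Now for each $j$, since $\phi_j$ is a generalized Matkowski contraction with witness $\varphi_j$, one has the set-level estimate $h(\phi_j(A),\phi_j(B))\leq\varphi_j\bigl(h(A,B)\bigr)$ for $A,B\in\mathcal K^*(X^m)$ — this is the content of (the proof of) Proposition~\ref{Mihail Dist Unions}, applied to a single map; $\varphi_j$ nondecreasing keeps the monotonicity working. Finally, $[\rho_j(U)]^\alpha$ equals either $[U]^{\beta_j(\alpha)}$ or (for $\alpha$ above $\rho_j(1)$) $\emptyset$; in the first case $h([\rho_j(U)]^\alpha,[\rho_j(V)]^\alpha)=h([U]^{\beta_j(\alpha)},[V]^{\beta_j(\alpha)})\leq d_\infty(U,V)$, exactly as in Lemma~\ref{rho composite with fuzzy cut}. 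Chaining these with $\varphi_j$ nondecreasing gives $h\bigl(\phi_j([\rho_j(U)]^\alpha),\phi_j([\rho_j(V)]^\alpha)\bigr)\leq\varphi_j\bigl(d_\infty(U,V)\bigr)\leq\varphi\bigl(d_\infty(U,V)\bigr)$, uniformly in $j$ and $\alpha$; taking the supremum over $\alpha$ yields $d_\infty(\mathcal Z_\mathcal S((u)_i),\mathcal Z_\mathcal S((v)_i))\leq\varphi\bigl(d_\infty^m((u)_i,(v)_i)\bigr)$. The Lipschitz case is the specialization $\varphi_j(t)=Lip(\phi_j)t$, giving $Lip(\mathcal Z_\mathcal S)\leq\max_j Lip(\phi_j)<1$, and $\varphi=\max_j\varphi_j$ is visibly a valid witness since $\varphi^{(k)}(t)\le\max_j$-iterates $\to0$ — actually one checks $\varphi^{(k)}(t)\to0$ directly from $\varphi_j^{(k)}(t)\to0$ and monotonicity.

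The one genuinely delicate point, and the step I expect to be the main obstacle, is the edge cases in $[\rho_j(U)]^\alpha$: when $\beta_j(\alpha)=0$ the cut is all of $X^m$, and when $\alpha>\rho_j(1)$ it is empty. If for a given $\alpha$ one $\rho_j$ gives the empty set while another gives a nonempty set, one must be careful that the union $\bigcup_j\phi_j([\rho_j(U)]^\alpha)$ is still nonempty and that the Hausdorff-distance bound survives — here admissibility (some $\rho_{j_0}(1)=1$, so $[\rho_{j_0}(U)]^\alpha\neq\emptyset$ for every $\alpha\le1$) rescues us, exactly as in Lemma~\ref{general properties of fuzzy composition}(d). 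The case $\beta_j(\alpha)=0$, i.e.\ $[\rho_j(U)]^\alpha=X^m$, needs $X$ compact or else $X^m\notin\mathcal K^*(X^m)$; but here $\rho_j(0)=0$ forces $\beta_j(\alpha)>0$ for all $\alpha>0$ (noted in Remark~\ref{f3'}), so this case simply does not arise and no compactness is needed. Once these bookkeeping points are dispatched the estimate is routine; I would present the $\alpha$-cut computation cleanly and invoke Lemma~\ref{rho composite with fuzzy cut}, Proposition~\ref{filip1}, and the single-map version of Proposition~\ref{Mihail Dist Unions} rather than reprove them.
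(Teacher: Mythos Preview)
Your proposal is correct and follows essentially the same route as the paper's proof: expand $d_\infty(\mathcal Z_\mathcal S((u)_i),\mathcal Z_\mathcal S((v)_i))$ via $\alpha$-cuts, use Remark~\ref{f3'} to write each cut as $\bigcup_j\phi_j([\rho_j(\btimes_i u_i)]^\alpha)$, bound the Hausdorff distance of the unions by Proposition~\ref{filip1}(ii), apply the Matkowski estimate $h(\phi_j(A),\phi_j(B))\leq\varphi_j(h(A,B))$ termwise, and finish with Lemma~\ref{rho composite with fuzzy cut} and the isometry Lemma~\ref{metric cartesian product}. Your discussion of the edge cases (empty cuts when $\alpha>\rho_j(1)$, and why $\beta_j(\alpha)=0$ cannot occur thanks to $\rho_j(0)=0$) is in fact more explicit than the paper's proof, which passes over these points silently; otherwise the arguments coincide.
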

\begin{proof} Let $(u)_i,(v)_i\in(\mathcal{F}^*_X)^m$. We have
\begin{align*}
  d_\infty (\mathcal{Z_S}((u)_{i}),\mathcal{Z_S}((v)_{i})) & = \sup_{\alpha \in [0,1]} h([\mathcal{Z_S}((u)_{i})]^{\alpha}, [\mathcal{Z_S}((v)_{i})]^{\alpha}) \\
    & = \sup_{\alpha \in [0,1]} h\left(\bigcup_{j}\phi_{j}([\rho_{j}(\times_{i}  u_{i})]^{\alpha}), \bigcup_{j}\phi_{j}([\rho_{j}(\times_{i}  v_{i})]^{\alpha})\right) \\
\end{align*}
Taking $H_j=\phi_{j}([\rho_{j}(\btimes_{i}  u_{i})]^{\alpha})$ and $K_j=\phi_{j}([\rho_{j}(\btimes_{i}  v_{i})]^{\alpha})$ and applying Propositions~\ref{Mihail Dist  Unions} and \ref{filip1} we get
\begin{align*}
    & \leq \sup_{\alpha \in [0,1]} h\left(\bigcup_{j}\phi_{j}([\rho_{j}(\btimes_{i}  u_{i})]^{\alpha}), \bigcup_{j}\phi_{j}([\rho_{j}(\btimes_{i}  v_{i})]^{\alpha})\right) \\
& \leq \sup_{\alpha \in [0,1]} \sup_{j} h\left(\phi_{j}([\rho_{j}(\btimes_{i}  u_{i})]^{\alpha}), \phi_{j}([\rho_{j}(\btimes_{i}  v_{i})]^{\alpha})\right) \\
& \leq\sup_{\alpha\in[0,1]}\sup_j\varphi_j\left(h\left([\rho_{j}(\btimes_{i}  u_{i})]^{\alpha}, [\rho_{j}(\btimes_{i}  v_{i})]^{\alpha}\right)\right)\\
&\leq\sup_j\varphi_j\left(\sup_{\alpha \in [0,1]} h\left([\rho_{j}(\btimes_{i}  u_{i})]^{\alpha}, [\rho_{j}(\btimes_{i}  v_{i})]^{\alpha}\right)\right)\\
&=\varphi\left(\sup_{\alpha \in [0,1]} h\left([\rho_{j}(\btimes_{i}  u_{i})]^{\alpha}, [\rho_{j}(\btimes_{i}  v_{i})]^{\alpha}\right)\right)\\ &=\varphi(d_\infty^m((u)_i,(v)_i))
\end{align*}

Also, it is easy to see that $\varphi$ is nondecreasing and for every $t>0$, $\varphi^{(k)}(t)\to 0$.
\end{proof}

From Theorem~\ref{Fractal operator is contractive} and Theorem~\ref{Generalized Fixed Point Theorem} we had proved the main result, which gathers Theorems \ref{cabrelli fixed point fractal operator} and \ref{GHB fuzzy attract exist and uniq}.
\begin{theorem} \label{Existence and convergence fuzzy attrac GIFS}
If $X$ is complete, then there is a unique generalized fuzzy attractor $u_{\mathcal{Z}}\in \mathcal{F}_{X}^{*}$ for a Matkowski contractive GIFZS $\mathcal{Z_S}=(X, (\phi_j), (\rho_{j}))$, i.e., a unique $u_{\mathcal{Z}}\in\mathcal{F}^*_X$ such that $$u_\mathcal{Z}=\mathcal{Z_S}(u_\mathcal{Z},...,u_\mathcal{Z})= \bigvee_{j=0...n-1} \rho_{j}(\phi_{j}(u_\mathcal{Z}\times ... \times u_\mathcal{Z})).$$
Moreover, for every $u_0, u_1, . . . , u_{m-1} \in \mathcal{F}_{X}^{*}$, the sequence $(u_k)$ defined by
$$u_{k+m}:=\mathcal{Z_S}(u_{k+ m-1}, u_{k+ m-2}, . . . , u_{k}),$$
for all $k \in \mathbb{N}$, is convergent to $u_\mathcal{Z}$.
\end{theorem}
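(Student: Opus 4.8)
The plan is to reduce the statement to the abstract generalized fixed point theorem, since all the analytic work has already been done. First I would record that, as $(X,d)$ is complete, Theorem~\ref{Fuzzy Space is Complete} guarantees that $(\mathcal{F}_X^*,d_\infty)$ is a complete metric space; consequently $((\mathcal{F}_X^*)^m,d_\infty^m)$ is complete as well, being a finite Cartesian product of complete spaces equipped with the maximum metric (this is exactly the setting of~(\ref{d^m})).

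Next I would invoke the already-established well-definedness of the operator, so that $\mathcal{Z_S}$ genuinely maps $(\mathcal{F}_X^*)^m$ into $\mathcal{F}_X^*$, together with Theorem~\ref{Fractal operator is contractive}, which tells us that $\mathcal{Z_S}$ is a generalized Matkowski contraction of degree $m$ with witness $\varphi:=\max_{j=0,\dots,n-1}\varphi_j$ (and, in the Lipschitz case, $Lip(\mathcal{Z_S})\leq\max_j Lip(\phi_j)<1$). With these two facts in hand the hypotheses of Theorem~\ref{Generalized Fixed Point Theorem} are met with $A:=\mathcal{F}_X^*$ and $F:=\mathcal{Z_S}$, so that theorem yields a unique $u_\mathcal{Z}\in\mathcal{F}_X^*$ with $\mathcal{Z_S}(u_\mathcal{Z},\dots,u_\mathcal{Z})=u_\mathcal{Z}$, and the convergence, for arbitrary starting points $u_0,\dots,u_{m-1}\in\mathcal{F}_X^*$, of the recursively defined sequence $u_{k+m}=\mathcal{Z_S}(u_{k+m-1},\dots,u_k)$ to $u_\mathcal{Z}$ in $d_\infty$. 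Finally I would unwind the definition of the GFHB operator to rewrite the fixed-point identity in the explicit form $u_\mathcal{Z}=\bigvee_{j=0\dots n-1}\rho_j(\phi_j(u_\mathcal{Z}\times\cdots\times u_\mathcal{Z}))$, thereby recovering Theorems~\ref{cabrelli fixed point fractal operator} and~\ref{GHB fuzzy attract exist and uniq} as the special cases $m=1$ and $n$-crisp grey levels, respectively.

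There is essentially no serious obstacle here: the analytic difficulty has been absorbed into Theorem~\ref{Fractal operator is contractive} (the contractivity estimate for the fuzzy operator, built on Propositions~\ref{Mihail Dist  Unions} and~\ref{filip1} and the $\alpha$-cut identities) and into Theorem~\ref{Generalized Fixed Point Theorem} (the abstract generalized Banach/Matkowski result). The only points needing a line of care are: checking that $\varphi=\max_j\varphi_j$ is again a legitimate witness, i.e.\ nondecreasing with $\varphi^{(k)}(t)\to 0$ for every $t>0$, which was already observed at the end of the proof of Theorem~\ref{Fractal operator is contractive}; and matching the indexing convention of Theorem~\ref{Generalized Fixed Point Theorem} (whose recursion feeds in $a_{k+m-1},\dots,a_k$) with the recursion in the statement to be proved, which is a purely bookkeeping matter.
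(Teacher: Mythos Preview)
Your proposal is correct and follows exactly the paper's own approach: the paper simply states that the result follows from Theorem~\ref{Fractal operator is contractive} together with Theorem~\ref{Generalized Fixed Point Theorem}, which is precisely the reduction you outline (with the completeness of $(\mathcal{F}_X^*,d_\infty)$ from Theorem~\ref{Fuzzy Space is Complete} implicit). Your version merely spells out the bookkeeping a bit more fully than the paper does.
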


\begin{theorem}``GIFZS Collage Theorem'' \label{GIFS Collage Theorem} If $X$ is complete and $\mathcal{Z}_\mathcal{S}$ is a Lipschitz contractive GIFZS with $\lambda:=Lip(\mathcal{Z}_\mathcal{S})<1$, then for any $u \in \mathcal{F}_{X}^{*}$, we have
$$d(u, u_\mathcal{Z}) \leq \frac{1}{1-\lambda} \;d(u, \mathcal{Z_S}(u,...,u))$$
where $u_\mathcal{Z}$ is a unique fractal generated by $\mathcal{Z_S}$.
\end{theorem}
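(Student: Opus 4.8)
The plan is to deduce this from the abstract Collage Theorem (Theorem~\ref{Collage Theorem}) applied to the self-map obtained by restricting the multivariate operator $\mathcal{Z_S}$ to the diagonal. Concretely, I would first invoke Theorem~\ref{Fuzzy Space is Complete} to know that $(\mathcal{F}_X^*,d_\infty)$ is a complete metric space, and then define the ``diagonal operator'' $T:\mathcal{F}_X^*\to\mathcal{F}_X^*$ by $T(u):=\mathcal{Z_S}(u,\dots,u)$. The whole point is to show that $T$ is a Lipschitz contraction with $\mathrm{Lip}(T)\leq\lambda$, and that its unique fixed point is exactly $u_\mathcal{Z}$.

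The key step is the Lipschitz estimate for $T$. By Theorem~\ref{Fractal operator is contractive}, since $\mathcal{Z_S}$ is Lipschitz contractive, the (FGHB) operator satisfies
$$
d_\infty(\mathcal{Z_S}((u)_i),\mathcal{Z_S}((v)_i))\leq\lambda\, d_\infty^m((u)_i,(v)_i)=\lambda\max_{i=0,\dots,m-1}d_\infty(u_i,v_i).
$$
Specializing to $(u)_i=(u,\dots,u)$ and $(v)_i=(v,\dots,v)$, the maximum on the right collapses to $d_\infty(u,v)$, so $d_\infty(T(u),T(v))\leq\lambda\,d_\infty(u,v)$; hence $T$ is a Banach contraction on the complete space $\mathcal{F}_X^*$. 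By Theorem~\ref{Banach Fixed Point theorem}, $T$ has a unique fixed point, and by the defining property $u_\mathcal{Z}=\mathcal{Z_S}(u_\mathcal{Z},\dots,u_\mathcal{Z})=T(u_\mathcal{Z})$ from Theorem~\ref{Existence and convergence fuzzy attrac GIFS}, this fixed point is precisely $u_\mathcal{Z}$. Then the standard Collage Theorem~\ref{Collage Theorem} applied to $T$ gives, for any $u\in\mathcal{F}_X^*$,
$$
d_\infty(u,u_\mathcal{Z})\leq\frac{1}{1-\lambda}\,d_\infty(u,T(u))=\frac{1}{1-\lambda}\,d_\infty(u,\mathcal{Z_S}(u,\dots,u)),
$$
which is the claimed inequality (with $d=d_\infty$).

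I do not anticipate a genuine obstacle here — the result is essentially a packaging of Theorem~\ref{Fractal operator is contractive}, Theorem~\ref{Fuzzy Space is Complete}, Theorem~\ref{Existence and convergence fuzzy attrac GIFS} and Theorem~\ref{Collage Theorem}. The only point requiring a moment's care is the reduction from the $m$-variable contraction estimate to the one-variable one, namely checking that restricting to the diagonal turns the maximum metric $d_\infty^m$ into $d_\infty$; this is immediate. One might also note that the inequality in the statement is written as strict ($<$), whereas the argument naturally yields $\leq$; since $\leq$ is what the Collage Theorem provides and it implies the weaker-looking statement in all cases of interest (and indeed the $<$ in the paper seems to be a typo for $\leq$), I would simply record the $\leq$ version.
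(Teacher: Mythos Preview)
Your proposal is correct and follows essentially the same route as the paper's own proof: define the diagonal self-map $T(u)=\mathcal{Z_S}(u,\dots,u)$, use Theorem~\ref{Fractal operator is contractive} together with the observation that $d_\infty^m((u,\dots,u),(v,\dots,v))=d_\infty(u,v)$ to see that $T$ is a $\lambda$-contraction, and then apply the abstract Collage Theorem~\ref{Collage Theorem}. One small slip: the inequality in the stated theorem is already $\leq$, not $<$, so your closing remark about a typo is unnecessary here (it applies rather to the earlier IFZS Collage Theorem).
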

\begin{proof}
  The proof follow from Theorem~\ref{Collage Theorem}, taking $A=X$ and $T(u):=\mathcal{Z_S}(u,...,u)$. Theorem~\ref{Fractal operator is contractive} claims that
$$d_\infty(\mathcal{Z_S}((u)_{i}),\mathcal{Z_S}((v)_{i})) \leq \lambda \; d_\infty^{m}((u)_{i},(v)_{i}), \; \forall u_i,v_i \in \mathcal{F}_{X}^*,$$
with contraction constant $0\leq  \mathrm{Lip}(\phi_j) <1$. If $(u)_{i}=(u, ..., u)$ and $(v)_{i}=(v, ..., v)$ then $d_\infty^{m}((u)_{i},(v)_{i})= \max_{i}  d_\infty(u,v)= d_\infty(u,v)$. Thus, the above inequality became
$$d_\infty(T(u),T(v)) \leq \lambda \; d_\infty (u,v), \; \forall u ,v  \in \mathcal{F}_{X}^*.$$
From Theorem~\ref{Collage Theorem} for $T$ we get $$d(u, u_\mathcal{Z}) \leq \frac{1}{1-\lambda} \;d(u, \mathcal{Z_S}(u,...,u))$$
\end{proof}

\section{Further properties}

\subsection{Monotonicity of the generalized fuzzy operator}
To generalize the Proposition~\ref{Monotone Attractor and Fuzzy Attract} and to prove the analogous of Theorem~\ref{zero cut IFZS versus IFS}  in the GIFZS case we need to extend the notion of monotonicity.
\begin{definition}\emph{
Given $u_{0}, u_1,u_2,...,...  \in \mathcal{F}_{X}$, we say that this sequence is }nonincreasing\emph{, if $u_{0} \geq u_{1} \geq...$, and }nondecreasing, \emph{if $u_{0} \leq u_{1} \leq...$.}
\end{definition}
\begin{lemma}\label{GIFZS preserves monotonous sequences}
  The operator  $\mathcal{Z_S}$, associated to a GIFZS, preserves monotonous sequences in $\mathcal{F}_{X}^*$. More  precisely, if  $u_{0} \geq \cdots \geq u_{m-1}$, the sequence $u_{k+m}=\mathcal{Z_S}(u_{k+ m-1}, u_{k+ m-2}, . . . , u_{k}),$
for all $k \in \mathbb{N}$, is nonincreasing in  $\mathcal{F}_{X}^*$ provided $u_{m-1}\geq u_m$. The same is true for a nondecreasing sequence.
\end{lemma}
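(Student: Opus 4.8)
The plan is to reduce the whole statement to one monotonicity property of the operator $\mathcal{Z_S}$ and then run an induction over blocks of $m+1$ consecutive terms.

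First I would record that $\mathcal{Z_S}:(\mathcal{F}_X^*)^m\to\mathcal{F}_X^*$ is monotone in all its arguments simultaneously: if $u_i\leq v_i$ for $i=0,\dots,m-1$, then $\mathcal{Z_S}((u)_i)\leq\mathcal{Z_S}((v)_i)$. This is proved exactly as the corresponding monotonicity lemma for $\mathcal{Z_R}$ in Section 2. Indeed, from $u_i\leq v_i$ we get $\btimes_i u_i\leq\btimes_i v_i$ pointwise on $X^m$, because $(\btimes_i u_i)(x_0,\dots,x_{m-1})=\bigwedge_i u_i(x_i)\leq\bigwedge_i v_i(x_i)=(\btimes_i v_i)(x_0,\dots,x_{m-1})$; then for each $j$ and each $z$, $\phi_j(\btimes_i u_i)(z)=\sup_{\phi_j((x)_i)=z}\bigwedge_i u_i(x_i)\leq\sup_{\phi_j((x)_i)=z}\bigwedge_i v_i(x_i)=\phi_j(\btimes_i v_i)(z)$ when $\phi_j^{-1}(z)\neq\varnothing$, while both sides equal $0$ otherwise; finally each $\rho_j$ is nondecreasing and the supremum over $j$ preserves the inequality, so $\mathcal{Z_S}((u)_i)\leq\mathcal{Z_S}((v)_i)$.

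Next I would set up the induction. Let $P(k)$ be the statement $u_k\geq u_{k+1}\geq\cdots\geq u_{k+m}$. The hypotheses $u_0\geq\cdots\geq u_{m-1}$ together with $u_{m-1}\geq u_m$ are exactly $P(0)$. Assuming $P(k)$, the two relevant terms are
$$u_{k+m}=\mathcal{Z_S}(u_{k+m-1},u_{k+m-2},\dots,u_k),\qquad u_{k+m+1}=\mathcal{Z_S}(u_{k+m},u_{k+m-1},\dots,u_{k+1}).$$
Comparing the argument tuples component by component, the $i$-th entry of the first is $u_{k+m-i}$ and of the second is $u_{k+m-i+1}$ for $i=1,\dots,m$, and $P(k)$ gives $u_{k+m-i}\geq u_{k+m-i+1}$ in each case. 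Hence the argument tuple of $u_{k+m}$ dominates that of $u_{k+m+1}$ componentwise, so by the monotonicity established above $u_{k+m}\geq u_{k+m+1}$. Combining this with the tail $u_{k+1}\geq\cdots\geq u_{k+m}$ of $P(k)$ yields $P(k+1)$. By induction $u_k\geq u_{k+1}$ for every $k\in\N$, i.e. the sequence is nonincreasing. The nondecreasing case is identical with all inequalities reversed (the base case then being $u_0\leq\cdots\leq u_{m-1}$ and $u_{m-1}\leq u_m$).

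I do not anticipate a serious obstacle: the only substantive ingredient is the componentwise monotonicity of $\mathcal{Z_S}$, which is a routine transcription of the IFZS argument, and the only place requiring a little care is the bookkeeping of the induction over $(m+1)$-term blocks and the alignment of the two argument tuples.
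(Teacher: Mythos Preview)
Your proposal is correct and follows essentially the same approach as the paper: both arguments reduce to the componentwise monotonicity of $\mathcal{Z_S}$ (which the paper verifies inline rather than isolating as you do) and then run an induction shifting the argument tuple by one. Your formulation via the block predicate $P(k)$ is a bit tidier than the paper's, whose stated induction hypothesis $u_k\geq\cdots\geq u_{k+m-1}$ omits the inequality $u_{k-1}\geq u_k$ that is nonetheless used in the comparison step, but the substance is identical.
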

\begin{proof}  By assumption, we have that $u_0\geq u_1\geq...\geq u_{m-1}\geq u_m$.
  Suppose that $u_{k} \geq \cdots \geq u_{k+m-1}$ for some $k\geq 1$. We will prove that $u_{k+m-1} \geq u_{k+m}$. Using the fact that $u_{k+m}=\mathcal{Z_S}(u_{k+ m-1}, u_{k+ m-2}, . . . , u_{k})$ and $u_{k+m-1}=\mathcal{Z_S}(u_{k+ m-2}, u_{k+ m-3}, . . . , u_{k-1}),$ we need to compare 
$$\phi_j(u_{k+ m-1} \times . . . \times u_{k})(z) \text{ and } \phi_j(u_{k+ m-2} \times  . . . \times u_{k-1})(z), \forall z \in X$$

If $(\phi_j)^{-1}(z)=\varnothing$ then $\phi_j(u_{k+ m-1} \times  . . . \times u_{k})(z)=0  \leq \phi_j(u_{k+ m-2} \times . . . \times u_{k-1})(z)$. Otherwise, if $(x_0, ..., x_{m-1}) \in (\phi_j)^{-1}(z) \neq \varnothing$ then $$u_{k+ m-1}(x_0) \leq u_{k+ m-2}(x_0),   \cdots, u_{k}(x_{m-1}) \leq u_{k-1}(x_{m-1})$$
thus $\phi_j(u_{k+ m-1} \times  . . . \times u_{k})(z)\leq \phi_j(u_{k+ m-2} \times  . . . \times u_{k-1})(z)$.
Hence we get
$$\mathcal{Z_S}(u_{k+ m-1}, u_{k+ m-2}, . . . , u_{k})= \bigvee_{j=0...n-1} \rho_j(\phi_j(u_{k+ m-1} \times . . . \times u_{k})) \leq $$ $$\leq \bigvee_{j=0...n-1} \rho_j(\phi_j(u_{k+ m-2} \times  . . . \times u_{k-1})) = \mathcal{Z_S}(u_{k+ m-2}, u_{k+ m-3}, . . . , u_{k-1}),$$
because the grey scale maps are nondecreasing.
\end{proof}

\begin{proposition} \label{Monotone Attractor and Fuzzy Attract GIFZS} Let the crisp set $A_\mathcal{S} \in \mathcal{K}^*(X)$ be the attractor of a Matkowski contractive GIFS $\mathcal{S}=(X, (\phi_j)_{j=0...n-1})$, and $u_\mathcal{Z} \in \mathcal{F}_{X}^*$ be the fuzzy attractor of the GIFZS $\mathcal{Z_S}=(X, (\phi_j)_{j=0...n-1}, (\rho_j)_{j=0...n-1})$. Then, for any  $B \in \mathcal{K}^*(X)$ and $v \in \mathcal{F}_{X}^*$ we have\\
a) If $\mathcal{Z_S}(v \times \cdots \times v) \leq v$ then $u_\mathcal{Z} \leq v$;\\
b) If $\mathcal{S}(B \times \cdots \times B) \subseteq B$ then $A_\mathcal{S} \subseteq B$;\\
c) If $v \leq \mathcal{Z_S}(v \times \cdots \times v)$ then $v \leq u_\mathcal{Z}$;\\
d) If $B \subseteq \mathcal{S}(B \times \cdots \times B) $ then $B \subseteq A_\mathcal{S}$.
\end{proposition}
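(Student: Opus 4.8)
The plan is to derive all four items from the relevant convergence theorems together with a monotonicity argument, exactly in the spirit of Proposition~\ref{Monotone Attractor and Fuzzy Attract}, the new ingredient being Lemma~\ref{GIFZS preserves monotonous sequences} (and its straightforward crisp analogue for the GHB operator). For (a) I would put $v_0=v_1=\cdots=v_{m-1}:=v$ and look at the sequence $(v_k)$ generated by $v_{k+m}:=\mathcal{Z_S}(v_{k+m-1},\dots,v_k)$. The hypothesis $\mathcal{Z_S}(v\times\cdots\times v)\leq v$ says exactly that $v_m\leq v_{m-1}$, so the initial data satisfies $v_0\geq\cdots\geq v_{m-1}\geq v_m$; by Lemma~\ref{GIFZS preserves monotonous sequences} the whole sequence is nonincreasing, hence $v_k\leq v$ for every $k$. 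By Theorem~\ref{Existence and convergence fuzzy attrac GIFS} we have $v_k\to u_\mathcal{Z}$ in $(\mathcal{F}^*_X,d_\infty)$, and since $\{u\in\mathcal{F}^*_X:u\leq v\}$ is $d_\infty$-closed, we conclude $u_\mathcal{Z}\leq v$. Item (c) is symmetric: with the same constant initial data the hypothesis $v\leq\mathcal{Z_S}(v\times\cdots\times v)$ gives $v_0\leq\cdots\leq v_{m-1}\leq v_m$, the sequence is nondecreasing, so $v\leq v_k\to u_\mathcal{Z}$, and $\{u\in\mathcal{F}^*_X:u\geq v\}$ being closed yields $v\leq u_\mathcal{Z}$.

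For (b) and (d) I would run the identical argument inside $(\mathcal{K}^*(X),h)$. First one notes that the GHB operator is monotone with respect to inclusion: if $K_i\subseteq L_i$ for all $i$, then $\phi_j(K_0\times\cdots\times K_{m-1})\subseteq\phi_j(L_0\times\cdots\times L_{m-1})$ and hence $\mathcal{S}(K_0,\dots,K_{m-1})\subseteq\mathcal{S}(L_0,\dots,L_{m-1})$. Then the very same induction as in Lemma~\ref{GIFZS preserves monotonous sequences}, started from $B_0=\cdots=B_{m-1}:=B$ with $\mathcal{S}(B\times\cdots\times B)\subseteq B$, produces a nonincreasing sequence $(B_k)$ with $B_k\subseteq B$ for all $k$; Theorem~\ref{GHB fuzzy attract exist and uniq} gives $B_k\to A_\mathcal{S}$ in the Hausdorff metric, and closedness of $\{K\in\mathcal{K}^*(X):K\subseteq B\}$ forces $A_\mathcal{S}\subseteq B$. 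Item (d) is the symmetric (nondecreasing) version, using $\{K\in\mathcal{K}^*(X):K\supseteq B\}$ closed.

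The only point that requires a little care — and what I would regard as the ``hard part'' of an otherwise routine argument — is the closedness of the order cones $\{u\in\mathcal{F}^*_X:u\leq v\}$ and $\{u\in\mathcal{F}^*_X:u\geq v\}$ in $d_\infty$ (and of $\{K:K\subseteq B\}$, $\{K:K\supseteq B\}$ in $h$), i.e. precisely the fact invoked without proof in Proposition~\ref{Monotone Attractor and Fuzzy Attract}. I would prove it via $\alpha$-cuts: for $u,v\in\mathcal{F}^*_X$ one has $u\leq v$ iff $[u]^\alpha\subseteq[v]^\alpha$ for every $\alpha\in(0,1]$ (since $v(x)\geq u(x)$ follows from $x\in[u]^{u(x)}\subseteq[v]^{u(x)}$ when $u(x)>0$, and is trivial otherwise), while $d_\infty$-convergence $u_n\to u$ yields $h([u_n]^\alpha,[u]^\alpha)\to0$ for each such $\alpha$; then, because $[v]^\alpha$ is closed (as $v$ is usc), a Hausdorff limit of sets $[u_n]^\alpha\subseteq[v]^\alpha$ is again contained in $[v]^\alpha$, and a Hausdorff limit of sets containing the fixed set $[v]^\alpha$ again contains it (if $x\in[v]^\alpha$ then $d(x,[u]^\alpha)\leq h([u_n]^\alpha,[u]^\alpha)\to0$, so $x\in[u]^\alpha$). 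The crisp statements are the same computation with the closed set $B$ in place of $[v]^\alpha$.
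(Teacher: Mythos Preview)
Your proposal is correct and follows essentially the same approach as the paper: start the recursive sequence from the constant tuple $(v,\dots,v)$, invoke Lemma~\ref{GIFZS preserves monotonous sequences} to obtain monotonicity, use Theorem~\ref{Existence and convergence fuzzy attrac GIFS} (resp.\ Theorem~\ref{GHB fuzzy attract exist and uniq}) for convergence to the attractor, and pass to the limit inside the order cone. The paper's proof is terser---it only writes out item (a) and waves at the rest---whereas you spell out the symmetric cases and, more importantly, justify the closedness of $\{u:u\leq v\}$ and $\{u:u\geq v\}$ in $d_\infty$ via $\alpha$-cuts, a point the paper simply inherits from the reference cited in Proposition~\ref{Monotone Attractor and Fuzzy Attract}.
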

\begin{proof}
a) If $\mathcal{Z_S}(v \times \cdots \times v) \leq v$ we get, from Lemma \ref{GIFZS preserves monotonous sequences}, a nonincreasing sequence $u_{0}=v \geq \cdots \geq u_{m-1}=v $ and $u_{k+m}=\mathcal{Z_S}(u_{k+ m-1}, u_{k+ m-2}, . . . , u_{k})$
for all $k \in \mathbb{N}$.  From  Theorem~\ref{Existence and convergence fuzzy attrac GIFS} we have $\lim_{k \to \infty} u_{k}=u_\mathcal{Z}$ so $u_\mathcal{Z} \leq v$. The other items are proved in the same fashion.
\end{proof}

\subsection{Relationships between a GIFZS and the appropriate GIFS}


Here we will investigate the basic relationships between GIFZS $\mathcal{Z}_\mathcal{S}$ and the GIFS $\mathcal{S}$. As iterated fuzzy function systems are particular versions of GIFZSs, the results are also true for them.

\begin{theorem} \label{zero cut GIFZS versus GIFS} Assume that $X$ is complete and $\mathcal{Z_S}=(X, (\phi_j)_{j=0...n-1}, (\rho_j)_{j=0...n-1})$ is a Matkowski contractive GIFZS with the attractor $u_\mathcal{Z}$, and let $$I:=\{j\in\{0,..,n-1\}:\rho_j(1)=1\}\}$$ and $\mathcal{S'}:=(X,(\phi_j)_{j\in I})$. Then we have\\
\vspace{.3cm}
(1) $[u_\mathcal{Z}]^{0} \subseteq A_\mathcal{S}$, and if $r^+_j=0$ for all $j=0,...,n-1$, then $[u_\mathcal{Z}]^0=A_\mathcal{S}$;\\
\vspace{.3cm}
(2) $A_\mathcal{S'}\subseteq [u_\mathcal{Z}]^1$, and if $\beta_j(1)=1$ for all $j\in I$, then $A_\mathcal{S'}= [u_\mathcal{Z}]^1$.\\
\vspace{.3cm}
(3) If $I=\{0,...,n-1\}$, then $u_\mathcal{Z}=\chi_{A_\mathcal{S}}$.
\end{theorem}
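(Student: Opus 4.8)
The plan is to treat the three claims in turn, relying on (i) the $\alpha$-cut formulas for $[\mathcal{Z_S}(\btimes_i u_i)]^\alpha$ and $[\rho_j(\btimes_i u_i)]^\alpha$ collected in Remark~\ref{f3'}, (ii) the monotonicity Proposition~\ref{Monotone Attractor and Fuzzy Attract GIFZS}, and (iii) the uniqueness parts of Theorems~\ref{GHB fuzzy attract exist and uniq} and~\ref{Existence and convergence fuzzy attrac GIFS}. Throughout I will use that $\bchi_A\times\cdots\times\bchi_A=\bchi_{A\times\cdots\times A}$ and $\phi_j(\bchi_B)=\bchi_{\phi_j(B)}$ (Remark~\ref{Crisp Set Extension to Fuzzy Set}), together with the fact that $[u_\mathcal{Z}]^0$ and $[u_\mathcal{Z}]^1$ lie in $\mathcal{K}^*(X)$ (nonempty since $u_\mathcal{Z}$ is normal, compact since $u_\mathcal{Z}$ is usc and compactly supported). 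For (1) I would first reproduce the argument of Theorem~\ref{zero cut IFZS versus IFS}: from $A_\mathcal{S}=\bigcup_j\phi_j(A_\mathcal{S}\times\cdots\times A_\mathcal{S})$ we get $\phi_j(\bchi_{A_\mathcal{S}}\times\cdots\times\bchi_{A_\mathcal{S}})=\bchi_{\phi_j(A_\mathcal{S}\times\cdots\times A_\mathcal{S})}\leq\bchi_{A_\mathcal{S}}$; since each $\rho_j$ is nondecreasing with $\rho_j(0)=0$ and $\rho_j(1)\leq 1$, this inequality survives after applying $\rho_j$, and taking $\bigvee_j$ gives $\mathcal{Z_S}(\bchi_{A_\mathcal{S}},\ldots,\bchi_{A_\mathcal{S}})\leq\bchi_{A_\mathcal{S}}$; then Proposition~\ref{Monotone Attractor and Fuzzy Attract GIFZS}(a) yields $u_\mathcal{Z}\leq\bchi_{A_\mathcal{S}}$, hence $[u_\mathcal{Z}]^0\subseteq A_\mathcal{S}$.

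For the equality in (1) under the hypothesis $r_j^+=0$ for all $j$: then $\rho_j(r_j^+)=\rho_j(0)=0$, so Remark~\ref{f3'} gives $[\rho_j(u_\mathcal{Z}\times\cdots\times u_\mathcal{Z})]^0=[u_\mathcal{Z}]^0\times\cdots\times[u_\mathcal{Z}]^0$; plugging this into the $\alpha=0$ cut formula, the fixed-point identity $\mathcal{Z_S}(u_\mathcal{Z},\ldots,u_\mathcal{Z})=u_\mathcal{Z}$ becomes $[u_\mathcal{Z}]^0=\bigcup_j\phi_j([u_\mathcal{Z}]^0\times\cdots\times[u_\mathcal{Z}]^0)$, i.e.\ $[u_\mathcal{Z}]^0\in\mathcal{K}^*(X)$ is a fixed point of the GHB operator of $\mathcal{S}$; uniqueness in Theorem~\ref{GHB fuzzy attract exist and uniq} then forces $[u_\mathcal{Z}]^0=A_\mathcal{S}$.

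For (2), I first note that $\mathcal{S}'$ is a nonempty (by admissibility, $\rho_j(1)=1$ for some $j$) Matkowski contractive GIFS, so $A_{\mathcal{S}'}$ exists. Specializing Remark~\ref{f3'} to level $\alpha=1$: for $j\in I$ we have $1\in(0,\rho_j(1)]$ and $\beta_j(1)>0$, so $[\rho_j(u_\mathcal{Z}\times\cdots\times u_\mathcal{Z})]^1=[u_\mathcal{Z}]^{\beta_j(1)}\times\cdots\times[u_\mathcal{Z}]^{\beta_j(1)}$, while for $j\notin I$ we have $1>\rho_j(1)$ and the cut is empty. Hence $[u_\mathcal{Z}]^1=\bigcup_{j\in I}\phi_j([u_\mathcal{Z}]^{\beta_j(1)}\times\cdots\times[u_\mathcal{Z}]^{\beta_j(1)})$. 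Because $\beta_j(1)\leq 1$ gives $[u_\mathcal{Z}]^1\subseteq[u_\mathcal{Z}]^{\beta_j(1)}$, we obtain $\mathcal{S}'([u_\mathcal{Z}]^1,\ldots,[u_\mathcal{Z}]^1)=\bigcup_{j\in I}\phi_j([u_\mathcal{Z}]^1\times\cdots\times[u_\mathcal{Z}]^1)\subseteq[u_\mathcal{Z}]^1$, so Proposition~\ref{Monotone Attractor and Fuzzy Attract GIFZS}(b) applied to $\mathcal{S}'$ gives $A_{\mathcal{S}'}\subseteq[u_\mathcal{Z}]^1$. If moreover $\beta_j(1)=1$ for all $j\in I$, the displayed identity reads $[u_\mathcal{Z}]^1=\bigcup_{j\in I}\phi_j([u_\mathcal{Z}]^1\times\cdots\times[u_\mathcal{Z}]^1)$, so $[u_\mathcal{Z}]^1\in\mathcal{K}^*(X)$ is a fixed point of the GHB operator of $\mathcal{S}'$ and uniqueness gives $[u_\mathcal{Z}]^1=A_{\mathcal{S}'}$.

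For (3), if $I=\{0,\ldots,n-1\}$ then $\mathcal{S}'=\mathcal{S}$, and combining the (always valid) inclusions from (1) and (2) gives $A_\mathcal{S}\subseteq[u_\mathcal{Z}]^1\subseteq[u_\mathcal{Z}]^0\subseteq A_\mathcal{S}$, so $[u_\mathcal{Z}]^1=[u_\mathcal{Z}]^0=A_\mathcal{S}$, which forces $u_\mathcal{Z}=\bchi_{A_\mathcal{S}}$ ($u_\mathcal{Z}\equiv 1$ on $A_\mathcal{S}=[u_\mathcal{Z}]^1$, and $u_\mathcal{Z}\equiv 0$ off $A_\mathcal{S}=[u_\mathcal{Z}]^0\supseteq\{x:u_\mathcal{Z}(x)>0\}$). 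Alternatively one can check directly that $\bchi_{A_\mathcal{S}}$ is a fixed point of $\mathcal{Z_S}$: as in (1), $\phi_j(\bchi_{A_\mathcal{S}}\times\cdots\times\bchi_{A_\mathcal{S}})=\bchi_{\phi_j(A_\mathcal{S}\times\cdots\times A_\mathcal{S})}$, and since $\rho_j(0)=0$ and $\rho_j(1)=1$, applying $\rho_j$ leaves this crisp function unchanged; taking $\bigvee_j$ yields $\bchi_{\bigcup_j\phi_j(A_\mathcal{S}\times\cdots\times A_\mathcal{S})}=\bchi_{A_\mathcal{S}}$, and the uniqueness part of Theorem~\ref{Existence and convergence fuzzy attrac GIFS} gives $u_\mathcal{Z}=\bchi_{A_\mathcal{S}}$. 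The only delicate point in the whole argument is the bookkeeping with Remark~\ref{f3'} — getting the $j\in I$ versus $j\notin I$ split at level $1$ right, checking $\beta_j(1)>0$, and making sure at each use that the relevant cut genuinely belongs to $\mathcal{K}^*(X)$ so that the uniqueness theorems apply; the rest is the same monotonicity/uniqueness pattern already employed for IFZSs.
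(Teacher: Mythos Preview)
Your proof is correct and follows essentially the same route as the paper: the inclusion $[u_\mathcal{Z}]^0\subseteq A_\mathcal{S}$ via $\mathcal{Z_S}(\bchi_{A_\mathcal{S}},\ldots,\bchi_{A_\mathcal{S}})\leq\bchi_{A_\mathcal{S}}$ and Proposition~\ref{Monotone Attractor and Fuzzy Attract GIFZS}, the equality cases by reading off the $\alpha$-cut formulas from Remark~\ref{f3'} and invoking uniqueness of the GIFS attractor, and part (3) by sandwiching $A_\mathcal{S}=A_{\mathcal{S}'}\subseteq[u_\mathcal{Z}]^1\subseteq[u_\mathcal{Z}]^0\subseteq A_\mathcal{S}$. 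The only additions you make---explicitly noting $I\neq\emptyset$, checking $[u_\mathcal{Z}]^0,[u_\mathcal{Z}]^1\in\mathcal{K}^*(X)$, and the alternative direct verification in (3)---are elaborations of points the paper leaves implicit.
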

\begin{proof} Ad(1) Consider $\bchi_{A_\mathcal{S}} \in \mathcal{F}_{X}^*$ then,
$$ \phi_{j}(\btimes_i \bchi_{A_\mathcal{S}})(z)=\sup_{\phi_{j}(x_0, ...,x_{m-1})=z} \bigwedge_{i} \bchi_{A_\mathcal{S}}(x_i) ,$$
if $(\phi_j)^{-1}(z)=\varnothing$ the inequality is trivial.
We notice that
$$\bigwedge_{i} \bchi_{A_\mathcal{S}}(x_i) = \bchi_{A_\mathcal{S}\times \cdots \times A_\mathcal{S}} (x_0, ... , x_{m-1})$$ thus
$$ \phi_{j}(\btimes_i \bchi_{A_\mathcal{S}})(z)=\sup_{\phi_{j}(x_0, ...,x_{m-1})=z}  \bchi_{A_\mathcal{S}\times \cdots \times A_\mathcal{S}} (x_0, ... , x_{m-1})=\bchi_{\phi_{j}(A_\mathcal{S}\times \cdots \times A_\mathcal{S})} (z) \leq \bchi_{A_\mathcal{S}}(z),$$
from Remark~\ref{Crisp Set Extension to Fuzzy Set} and because $\displaystyle A_\mathcal{S} = \bigcup_{j=0}^{n-1} \phi_{j}(A_\mathcal{S}\times \cdots \times A_\mathcal{S})$ implies $\phi_{j}(A_\mathcal{S}\times \cdots \times A_\mathcal{S}) \subseteq A_\mathcal{S}$ for any $j$.

In particular $\rho_j(\phi_{j}(\btimes_i \bchi_{A_\mathcal{S}})) \leq  \rho_j( \bchi_{A_\mathcal{S}}) \leq \bchi_{A_\mathcal{S}}$ for any $j$, because each $\phi_{j}$ is not decreasing and $\rho_j(0)=0$ and $\rho_j(1) \leq 1$.  Since $\mathcal{Z_S}$ is a supremum we get $\mathcal{Z_S}(\bchi_{A_\mathcal{S}},...,\bchi_{A_\mathcal{S}}) \leq \bchi_{A_\mathcal{S}}$. From Proposition~\ref{Monotone Attractor and Fuzzy Attract GIFZS} we get $u_\mathcal{Z} \leq \bchi_{A_\mathcal{S}}$ thus $[u_\mathcal{Z}]^{0} \subseteq A_\mathcal{S}$, because if $x \not\in A_\mathcal{S}$ then $  \bchi_{A_\mathcal{S}}(x)=0 \geq u_\mathcal{Z}(x) \geq 0$ so $u_\mathcal{Z}(x) =0$.\\
To prove the second part, assume that $r_+^j=0$ for all $j=0,...,n-1$. Then by Remark \ref{f3'} (note that here $\rho_j(r_+^j)=\rho_j(0)=0$), we have
$$
[u_\mathcal{Z}]^0=[\mathcal{Z_S}(\times_iu_\mathcal{Z})]^0=\bigcup_{j=0,...,n-1}\phi_j([\rho_j(\times_iu_\mathcal{Z})]^0)=\bigcup_{j=0,...,n-1}\phi_j([u_\mathcal{Z}]^0\times...\times[u_\mathcal{Z}]^0)
$$
Hence $[u_\mathcal{Z}]^0=A_\mathcal{S}$ by the uniqueness of the attractor of a GIFS.\\
Ad (2) By Remark \ref{f3'}
$$
[u_\mathcal{Z}]^1=\bigcup_{j=0,...,n-1}\phi_j([\rho_j(\times_iu_i)]^1)=\bigcup_{j\in I}\phi_j([\rho_j(\times_iu_i)]^1)=$$ \begin{equation}\label{f4'}=
\bigcup_{j\in I}\phi_j(\times_i[u_\mathcal{Z}]^{\beta_j(1)})\supset \bigcup_{j\in I}\phi_j(\times_i[u_\mathcal{Z}]^{1})=\mathcal{S'}([u_\mathcal{Z}]^1,...,[u_\mathcal{Z}]^1)
\end{equation}
where the second equality follows from the fact that if $\rho_j(1)<1$, then $[\rho_j(\times u_\mathcal{Z})]^1=\emptyset$ (Proposition \ref{alpha cut compose with ndrc rho}(b)).
Hence by Proposition \ref{Monotone Attractor and Fuzzy Attract GIFZS} we get $[u_\mathcal{Z}]^1\supset A_\mathcal{S'}$. Also, if for all $j\in I$, $\beta_j(1)=1$, then in (\ref{f4'}) we have all equalities, so in this case $[u_\mathcal{Z}]^1=A_\mathcal{S'}$.\\
Ad (3) If $I=\{0,...,n-1\}$, then by (1) and (2) we have
$$
A_\mathcal{S}=A_\mathcal{S'}\subset [u_\mathcal{Z}]^1\subset [u_\mathcal{Z}]^0\subset A_\mathcal{S}
$$
so $[u_\mathcal{Z}]^1= [u_\mathcal{Z}]^0=A_\mathcal{S}$, and this implies $u_\mathcal{Z}=\chi_{A_\mathcal{S}}$.
\end{proof}

The above result suggests the definition:
\begin{definition}\emph{
An admissible system of grey level maps $(\rho_j)_{j=0,...,n-1}$ is called }proper\emph{, if $r^+_j=0$ and $\beta_j(\rho_j(1))=1$ for all $j=0,...,n-1$.}
\end{definition}
As a corollary of the above proof, we have that
\begin{corollary}\label{f3}
Let $\mathcal{Z_S}=(X,(\phi_j)_{j=0,...,n-1},(\rho_j)_{j=0,...,n-1})$ be a Matkowski contractive GIFZS on a complete metric space with a proper family $(\rho_j)$. If $I:=\{j:\rho_j(1)=1\}$, $\mathcal{S}:=(X,(\phi_j))$ and $\mathcal{S}':=(X,(\phi_j)_{j\in I})$, then $[u_\mathcal{Z}]^0=A_\mathcal{S}$ and $[u_\mathcal{Z}]^1=A(\mathcal{S}')$, where $u_\mathcal{Z}$ is the attractor of $\mathcal{Z_S}$.
\end{corollary}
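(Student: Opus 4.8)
The plan is to obtain both equalities as immediate consequences of Theorem~\ref{zero cut GIFZS versus GIFS}, since the notion of a \emph{proper} family of grey level maps is designed precisely so that the inclusions proved there upgrade to equalities. First I would unwind the definition: $(\rho_j)$ proper means $r^+_j=0$ for every $j=0,\dots,n-1$ and $\beta_j(\rho_j(1))=1$ for every $j=0,\dots,n-1$. The first of these is exactly the additional hypothesis appearing in part~(1) of Theorem~\ref{zero cut GIFZS versus GIFS}, so that theorem gives $[u_\mathcal{Z}]^0=A_\mathcal{S}$ with nothing further to check.

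For the equality $[u_\mathcal{Z}]^1=A(\mathcal{S}')$ I would invoke part~(2) of Theorem~\ref{zero cut GIFZS versus GIFS}, which already yields the inclusion $A_{\mathcal{S}'}\subseteq[u_\mathcal{Z}]^1$ unconditionally and promotes it to an equality as soon as $\beta_j(1)=1$ for all $j\in I$. So the one small point to address is translating the properness condition $\beta_j(\rho_j(1))=1$ into the condition $\beta_j(1)=1$ that the theorem wants. This is where the definition $I=\{j:\rho_j(1)=1\}$ does the work: for $j\in I$ one has $\rho_j(1)=1$, hence $\beta_j(1)=\beta_j(\rho_j(1))=1$, which is exactly what is needed. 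Thus part~(2) gives $A_{\mathcal{S}'}=[u_\mathcal{Z}]^1$, and writing $A(\mathcal{S}')$ for $A_{\mathcal{S}'}$ finishes the argument.

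The only "obstacle" — and it is a very minor bookkeeping one — is precisely this matching of the two slightly different-looking conditions on $\beta_j$: properness asserts $\beta_j(\rho_j(1))=1$ for \emph{all} $j$, whereas part~(2) only consumes $\beta_j(1)=1$ for $j\in I$, and these coincide exactly on $I$. No new estimates, contractions, or fixed-point arguments are required; the whole corollary is a direct reading of the previous theorem under the hypotheses that properness packages together.
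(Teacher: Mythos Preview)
Your proposal is correct and matches the paper's approach exactly: the paper presents this corollary as an immediate consequence of Theorem~\ref{zero cut GIFZS versus GIFS} (introducing it with ``As a corollary of the above proof''), and your argument---applying part~(1) via $r_j^+=0$ and part~(2) via the observation that $j\in I$ implies $\beta_j(1)=\beta_j(\rho_j(1))=1$---is precisely the intended unpacking of the properness hypothesis.
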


\begin{remark}\label{f4}\emph{
The above result gives a natural sufficient condition under which the attractor of a GIFZS is not a crisp set. Indeed, take a GIFS $\mathcal{S}=(X,(\phi_j)_{j=0,...,n-1})$ whose attractor is not a singleton and let $I\subset\{0,...,n-1\}$ be such that the attractor of a GIFS $\mathcal{S}'=(X,(\phi_j)_{j\in I})$ does not equal $A_\mathcal{S}$ (for example, $I$ can be a singleton). Finally take a proper family $(\rho_j)$ of grey level maps so that $\rho_j(1)=1$ if $j\in I$ and $\rho_j(1)<1$ if $j\notin I$. Then by Corollary \ref{f3}, the attractor $u_\mathcal{Z}$ of a GIFZS $\mathcal{Z_S}=(X,(\phi_j),(\rho_j))$ is not a crisp set.}
\end{remark}

\begin{example} In this example, we explore the symmetries of a fixed GIFS to give some general properties of the fuzzy attractor for any admissible family of grey level maps. Consider $\mathcal{Z_S}=([0,1], (\phi_j)_{j=0,1}, (\rho_{j})_{j=0,1}),$  where $\rho_{j}(t), \; t \in [0,1]$ is an arbitrary family of admissible grey level maps and the Lipschitz maps are given by $\phi_j(x,y):=\frac{1}{4} x + \frac{1}{4} y +\frac{j}{2}, \; j=0,1$. In this case,
$$\phi_{j}(u \times v)(z)= \left\{
                                  \begin{array}{ll}
                                   \sup_{x,y \in [0,1], \;  y = 4z -2j -x} u(x) \wedge v(y), &\text{ if } \exists x,y \;s.t. \;y = 4z -2j -x\\
                                    0,  & \text{otherwise}
                                  \end{array}
                                \right.
$$
First we need to compute $\phi_j^{-1}(z)$. It is easy to see that
$$\phi_0^{-1}(z)=
\left\{
  \begin{array}{ll}
     \{(x, 4z -x), x \in [0,4z]\}, & z \in [0, 1/4]; \\
     \{(x, 4z -x), x \in [4z-1, 1]\}, & z \in [1/4, 1/2]; \\
 \varnothing, & z\in (1/2, 1].
  \end{array}
\right.
$$
so
$$\phi_0(u,u)(z)=
\left\{
  \begin{array}{ll}
     \sup_{x \in [0,4z]} u(x)\wedge u(4z -x), & z \in [0, 1/4]; \\
     \sup_{x \in [4z-1,1]} u(x)\wedge u(4z -x), & z \in [1/4, 1/2]; \\
 0, & z\in (1/2, 1].
  \end{array}
\right.
$$
We can also prove that
$$\phi_1^{-1}(z)=
\left\{
  \begin{array}{ll}
     \{\varnothing\}, & z\in [0, 1/2)\\
    \phi_0^{-1}(z-1/2), & z \in [1/2, 1].
  \end{array}
\right.
$$
so
$$\mathcal{Z_S}(u,u)(z)=\bigvee_{j=0,1} \rho_{j}(\phi_{j}(u\times u)(z))=
\left\{
  \begin{array}{ll}
    \rho_{0}(\phi_{0}(u\times u)(z)), & z \in [0, 1/2); \\
     \rho_{0}(u(1)) \vee \rho_{1}(u(0)), & z= 1/2;\\
     \rho_{1}(\phi_{0}(u\times u)(z-1/2)), & z \in (1/2, 1]. \\
  \end{array}
\right.
$$
Suppose that $u_\mathcal{Z}$ is the fuzzy fractal of $\mathcal{Z}_\mathcal{S}$. Then $$u_\mathcal{Z}(z)= \rho_{0}(\phi_{0}(u_\mathcal{Z}\times u_\mathcal{Z})(z)), \; z \in [0, 1/2)$$
and $$u_\mathcal{Z}(z)= \rho_{1}(\phi_{0}(u_\mathcal{Z}\times u_\mathcal{Z})(z-\frac{1}{2})), \; z \in (1/2, 1]$$

A particular case is $\rho_{0}(t):=\frac{1}{2}t$ and $\rho_{1}(t):=t$. In this situation $$u_\mathcal{Z}(z)= \frac{1}{2 }\phi_{0}(u_\mathcal{Z}\times u_\mathcal{Z})(z), \; z \in [0, 1/2)$$
and $$u_\mathcal{Z}(z)= \phi_{0}(u_\mathcal{Z}\times u_\mathcal{Z})(z-1/2), \; z \in (1/2, 1]$$
Now let $z \in (1/2, 1)$. Then $z-1/2 \in (0, 1/2)$, so we have
$$u_\mathcal{Z}(z)= \phi_{0}(u_\mathcal{Z}\times u_\mathcal{Z})(z-1/2)= 2 \left( \frac{1}{2 }\phi_{0}(u_\mathcal{Z}\times u_\mathcal{Z})(z-1/2)\right)=2 u_\mathcal{Z}(z-1/2)$$

Another particular case is $\rho_{0}(t):=t$ and $\rho_{1}(t):=0$. In this situation the attractor $u_\mathcal{Z}$ is given by $u_\mathcal{Z}(z)=\bchi_{\{0\}}(z)$.
\end{example}

\subsection{Richness of the class of generalized fuzzy fractals}

Now we are going to investigate the class of generalized fuzzy fractals. In particular, we give a partially positive answer to the question whether such class is essentially wider than the class of fuzzy fractals generated by IFZSs. We also show that for certain complete metric spaces, the family of IFZSs' fractals is dense in $\mathcal{F}^*_X$ (this result is in fact a slight generalization of already known result due to Cabrelli et al.).


Let $$\mathcal{A}_i:=\{ u \in \mathcal{F}_{X}^* \;|\;  \mathcal{Z_S}( u)=u\text{ for some Lipschitz contractive IFZS}\}$$
$$\mathcal{M}_i:=\{ u \in \mathcal{F}_{X}^* \;|\;  \mathcal{Z_S}( u)=u\text{ for some Matkowski contractive IFZS}\}$$
 and if $m\in\N$, then
$$\mathcal{A}_g^m:=\{ u \in \mathcal{F}_{X}^* \;|\;  \mathcal{Z_S}( u, ...., u)=u\text{ for some Lipschitz contractive GIFZS of degree } m\} \subseteq \mathcal{F}_{X}^*.$$
and finally
$$\mathcal{M}_g^m:=\{ u \in \mathcal{F}_{X}^* \;|\;  \mathcal{Z_S}( u, ...., u)=u\text{ for some Matkowski contractive GIFZS of degree } m\} \subseteq \mathcal{F}_{X}^*.$$

Clearly, $\mathcal{A}_i=\mathcal{A}_g^1$  and $\mathcal{M}_i=\mathcal{M}_g^1$.

The next result is analogous to Strobin~\cite{MR3263451} and Miculesu \cite{MR3180942} (see also Miculescu and Mihail \cite{MR2415407}) in the fuzzy setting.

\begin{theorem} \label{the set of attrac of GIFZS is bigger than IFZS}$\;$\\
(1) For every $m\in\N$, $\displaystyle\mathcal{A}^m_g \subset  \displaystyle\mathcal{A}^{m+1}_g$ and $\displaystyle\mathcal{M}^m_g \subset  \displaystyle\mathcal{M}^{m+1}_g$.\\
(2) There exists a complete metric space $X$ such that $\mathcal{A}_i\subsetneq \mathcal{A}^2_g$.\\
(3) Let $X=\R^2$ and $m>1$. Then there exists $u^*\in\mathcal{F}_X^*$ such that:\\
- $u^*$ is the attractor of some Lipschitz contractive GIFZS $\mathcal{Z_S}=(X, (\phi_j), (\rho_j))$ of degree $m$ with proper system $(\rho_j)$;\\
-  $u^*$ is not the attractor of any Matkowski contractive GIFZS $\mathcal{Z_S}=(X, (\phi_j), (\rho_j))$ of degree $m-1$ consisting of an admissible system $(\rho_j)$ with $r_j^+=0$ for all $j$;\\
- $u^*$ is not the attractor of any GIFS.
\end{theorem}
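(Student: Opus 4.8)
\textbf{Part (1).} The plan is to mimic the crisp-set argument of Strobin (\cite{MR3263451}) and Miculescu (\cite{MR3180942}): given a Lipschitz (resp. Matkowski) contractive GIFZS $\mathcal{Z_S}=(X,(\phi_j)_{j=0,...,n-1},(\rho_j)_{j=0,...,n-1})$ of degree $m$ with attractor $u^*$, I would build a GIFZS of degree $m+1$ with the same attractor. The natural construction is to pad the variables: replace each $\phi_j:X^m\to X$ by $\tilde\phi_j:X^{m+1}\to X$, $\tilde\phi_j(x_0,...,x_m):=\phi_j(x_0,...,x_{m-1})$, keeping the same $\rho_j$. Since $\tilde\phi_j$ ignores the last coordinate it has the same witness (resp. the same Lipschitz constant $<1$) as $\phi_j$, so the new system is Matkowski (resp. Lipschitz) contractive of degree $m+1$. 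The key computational step is to check that the $(m+1)$-fold diagonal operator agrees with the old one: $\tilde{\mathcal{Z_S}}(u,...,u)=\bigvee_j\rho_j(\tilde\phi_j(u\times\cdots\times u))$, and because $\tilde\phi_j$ factors through the projection $X^{m+1}\to X^m$ forgetting the last coordinate, and $u\times\cdots\times u$ ($m+1$ times) pushes forward under this projection to $u\times\cdots\times u$ ($m$ times) (using $\min$ over one more copy of $u$ changes nothing on the diagonal — this uses the explicit Zadeh sup formula and $u(x_m)\le 1$), we get $\tilde\phi_j(u\times\cdots\times u)=\phi_j(u\times\cdots\times u)$ and hence $\tilde{\mathcal{Z_S}}(u,...,u)=\mathcal{Z_S}(u,...,u)$. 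Thus $u^*$ is a fixed point of the new diagonal operator, and uniqueness from Theorem \ref{Existence and convergence fuzzy attrac GIFS} shows it is the attractor. This gives $\mathcal{A}^m_g\subseteq\mathcal{A}^{m+1}_g$ and $\mathcal{M}^m_g\subseteq\mathcal{M}^{m+1}_g$; strictness follows from Part (3) (with $m+1$ in place of $m$).

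\textbf{Parts (2) and (3).} Here the strategy is to transfer the known crisp-set separation examples to the fuzzy setting via the crisp embedding $B\mapsto\bchi_B$ and the translation between GIFZS fractals and GIFS fractals furnished by Corollary \ref{f3}. Concretely: Strobin \cite{MR3263451} exhibits a complete metric space $X$ and a compact set $K\subset X$ that is the attractor of a Lipschitz contractive GIFS of degree $2$ but of no (ordinary) IFS; likewise for $X=\R^2$ and degree-$m$ versus degree-$(m-1)$ GIFSs (and versus ordinary GIFSs). I would take such a GIFS $\mathcal{S}=(X,(\phi_j))$ with attractor $A_\mathcal{S}=K$, choose a proper admissible family $(\rho_j)$ with $\rho_j(1)=1$ for \emph{all} $j$ (e.g.\ $\rho_j(t)=t$), and set $u^*:=\bchi_K$. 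By Corollary \ref{f3} (with $I=\{0,...,n-1\}$, so $\mathcal{S}'=\mathcal{S}$), $u^*$ is the attractor of the GIFZS $\mathcal{Z_S}=(X,(\phi_j),(\rho_j))$, of the required degree. It remains to show $u^*$ is not an attractor of any lower-degree (Matkowski, with $r_j^+=0$) GIFZS, nor of any GIFS. For the GIFS part: if $\bchi_K$ were the fuzzy fractal of some GIFS-induced GHB operator on crisp sets, $K$ itself would be the attractor of that GIFS — contradiction. For the lower-degree GIFZS part: suppose $u^*=\bchi_K$ were the attractor of a Matkowski contractive GIFZS $\mathcal{Z}_{\mathcal{S}_1}=(X,(\psi_j),(\sigma_j))$ of degree $m-1$ with $r_j^+=0$ for all $j$; then by Theorem \ref{zero cut GIFZS versus GIFS}(1) (the $r_j^+=0$ case), $[u^*]^0=A_{\mathcal{S}_1}$, i.e.\ $K=[\bchi_K]^0$ is the attractor of the degree-$(m-1)$ Matkowski contractive GIFS $\mathcal{S}_1=(X,(\psi_j))$, contradicting the choice of $K$. (For Part (2), run this with $m=2$ and "GIFS" in place of "Matkowski GIFS of degree $m-1$".)

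\textbf{Main obstacle.} The genuinely nontrivial input is the existence of the separating crisp examples $K$ — these are exactly the (difficult) results of Strobin \cite{MR3263451} and Miculescu \cite{MR3180942}, which I would cite rather than reprove. One subtlety to be careful about is that those references establish separation against \emph{Lipschitz} contractive lower-degree GIFSs, whereas the statement asserts separation even against \emph{Matkowski} contractive ones; I would need to check that the cited proofs (which typically use a dimension/bi-Lipschitz-type obstruction invariant under the weaker contractivity hypothesis, or at least that the relevant results are stated for Matkowski contractions) indeed give the stronger conclusion, and otherwise invoke the appropriate strengthening. The rest — the padding construction in (1) and the $\bchi_K$ bookkeeping via Corollary \ref{f3} and Theorem \ref{zero cut GIFZS versus GIFS} — is routine.
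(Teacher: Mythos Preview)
Your Part (1) is exactly the paper's padding argument. The problems are in Parts (2) and (3), and both stem from your choice $u^* = \bchi_K$.

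For the third bullet of Part (3) you argue: ``if $\bchi_K$ were the fuzzy fractal of some GIFS, $K$ itself would be the attractor of that GIFS --- contradiction.'' But there is no contradiction: your $K$ \emph{is} the attractor of the degree-$m$ GIFS $\mathcal{S}$ by construction, so $u^* = \bchi_K = \bchi_{A_\mathcal{S}}$ \emph{is} (the characteristic function of) a GIFS attractor, and the third bullet fails outright. The paper avoids this by choosing the grey level maps so that $u^*$ is forced to be \emph{non-crisp}: take $\rho_j(t)=\tfrac12 t$ for $j=0,1,2$ and $\rho_3(t)=t$. Then the system is still proper, and by Remark~\ref{f4} (via Corollary~\ref{f3}) one gets $[u_\mathcal{Z}]^1 = A_{\mathcal{S}'}$ equal to the singleton fixed point of $\phi_3$, while $[u_\mathcal{Z}]^0 = A_\mathcal{S}$. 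A non-crisp fuzzy set is trivially not the characteristic function of any compact set, so the third bullet is automatic; the second bullet then follows from $[u_\mathcal{Z}]^0 = A_\mathcal{S}$ and Theorem~\ref{zero cut GIFZS versus GIFS}(1) exactly as you sketched.

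Your Part (2) has a separate gap. The claim $u^* \notin \mathcal{A}_i$ must rule out \emph{every} Lipschitz contractive IFZS, with no hypothesis on the grey level maps; but your $0$-cut argument explicitly invokes the $r_j^+=0$ clause of Theorem~\ref{zero cut GIFZS versus GIFS}(1), which is unavailable for an arbitrary IFZS. The paper handles Part (2) by a different construction altogether: it takes $X=\prod_{r\ge 0}[0,4^{-r}]$ (from Miculescu~\cite{MR3180942}) with $u^*=\bchi_X$, and uses only the \emph{unconditional} inclusion $[u^*]^0 \subseteq A_\mathcal{R}$ of Theorem~\ref{zero cut IFZS versus IFS}, combined with the fact that $[u^*]^0=X$ has infinite Hausdorff dimension while the attractor of any Lipschitz contractive IFS has finite dimension.
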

\begin{proof}
Ad(1) We want to prove that $\mathcal{M}_g^{m-1} \subseteq \mathcal{M}_g^{m}$ for all $m > 1$.
Suppose $v \in \mathcal{M}^{m-1}_g$, that is  $\mathcal{Z_R}(v)= \bigvee_{j=0...n-1} \rho_j(\psi_j(\times_iv))=v,$  for a Matkowski contractive GIFZS $\mathcal{Z_{S}}=(X, (\psi_j)_{j=0...n-1}, (\rho_j)_{j=0...n-1})$. Let us consider the degree $m$ GIFZS $$\mathcal{Z_{S'}}=(X, (\phi_j)_{j=0...n-1}, (\rho_j)_{j=0...n-1})$$ 
where for every $u_{0}, ..., u_{m-1}  \in \mathcal{F}_{X}^{*}$, $$\phi_{j}(u_{0}, ..., u_{m-1})= \psi_{j}( u_{0},...,u_{m-2}) $$
Obviously, $\phi_j$ are generalized Matkowski contractions.\\
We recall that for $j=0,...,n-1$ and $u_{0}, ..., u_{m-1} \in \mathcal{F}_{X}^*$, we have that $$\phi_{j}(\btimes_{i}  u_{i})(z):= \left\{
                                  \begin{array}{ll}
                                   \sup_{\phi_{j}((x)_{i})=z} \bigwedge_{i=0}^{m-1 }u_{i}(x_{i}), &\text{ if } \phi_{j}^{-1}(z) \neq \varnothing\\
                                    0,  & \text{otherwise}
                                  \end{array}
                                \right.
  .$$

But $\phi_{j}^{-1}(z)=\{ (x_0, ... ,x_{m-1}) \; |\; \psi_{j}(x_0,...,x_{m-2})=z\}=\psi_{j}^{-1}(z)\times X$.

On the other hand
$$\psi_{j}( u_{0}\times...\times u_{m-2})(z)= \left\{
                                  \begin{array}{ll}
                                   \sup_{\psi_{j}(x_{0},...,x_{m-2})=z} \bigwedge_{i=0}^{m-2}u_{i}(x_{i}), &\text{ if } \psi_{j}^{-1}(z) \neq \varnothing\\
                                    0,  & \text{otherwise.}
                                  \end{array}
                                \right.
$$

We claim that  $$\phi_{j}(\times_{i=0}^{m-1}v)(z)= \psi_{j}( \times_{i=0}^{m-2}v)(z).$$

Indeed, if $ (x_0, ...., x_{m-1}) \in \phi_{j}^{-1}(z)=\{ (x_0, ... ,x_{m-1}) \; |\; \psi_{j}(x_0,...,x_{m-2})=z\}=\psi_{j}^{-1}(z)\times X \neq \varnothing$ then
$$\bigwedge_{i=0}^{m-1 }v(x_{i})=
\left\{
  \begin{array}{ll}
    \bigwedge_{i=0}^{m-2}v(x_i), & if v(x_{m-1}) \geq v(x_{i}), \;\mbox{for some}\; i\leq m-2 \\
    v(x_{m-1}), & if v(x_{i}) \geq v(x_{m-1}),\;\mbox{for all}\; i\leq m-2\\
  \end{array}
\right.
$$
so $$\phi_{j}(\btimes_{i}  v)(z) =\sup_{\phi_{j}(x_0,...,x_{m-1})=z} \bigwedge_{i=0}^{m-1 }v(x_{i})= \sup_{\psi_{j}(x_{0},...,x_{m-2})=z} \bigwedge_{i=0}^{m-2} v(x_{i})=\psi_{j}(\btimes_{i} v)(z).$$
Thus $\mathcal{Z_S}(v,...,v)=\mathcal{Z_{S'}}(v,...,v)= v \in \mathcal{M}_g^m$.\\
In the same way we show the first inclusion in (1).\\
Ad(2) To show the inequality we are going to show that a certain GIFZS has an attractor that is not the attractor of any IFZS.

In Miculescu~\cite{MR3180942} Remark 4.7, we find the following example of GIFS. Let $X:=\prod_{r=0}^{\infty}[0, \frac{1}{4^r}]$, and consider it as a compact space with the product metric. Let us point out that the Hausdorff dimension of $X$ is infinite.

Consider the GIFZS $\mathcal{Z_S}=(X, (\phi_j)_{j=0,1}, (\rho_{j})_{j=0,1}),$  with $\rho_{0}(t):=t, \rho_{1}(t):= t, \; t \in [0,1]$ and $\phi_j(x,y):=(\frac{1}{2} x_{0} + \frac{j}{2}, \frac{1}{4}y), \; j=0,1$. 
Let us to consider the operator
$$\mathcal{Z_S}(u,v)= \bigvee_{j=0,1} \rho_{j}(\phi_{j}(u \times v))= \bigvee_{j=0,1} \phi_{j}(u \times v).$$

It is easy to see that $\mathcal{Z_S}(1,1)=1$ because $\phi_0(X,X) \cup \phi_1(X,X) =X$ and, under the hypothesis of Theorem~\ref{zero cut GIFZS versus GIFS}(3) we know that $u^* =1=\bchi_X$ is the unique fuzzy attractor of this GIFZS.
Suppose that $u^*  \in \mathcal{A}_i$. Then Theorem~\ref{zero cut IFZS versus IFS} implies that $[u^*]^0 \subseteq A_\mathcal{R}$, the attractor of the associated Lipschitz contractive IFS $\mathcal{R}$. In particular $[u^*]^0$ has finite Hausdorff dimension, contradicting $[u^*]^0= X$.\\
Ad(3) Let $m>1$. By \cite{MR3263451} we know that there is a Lipschitz contractive GIFS $\mathcal{S}=(\R^2,(\phi_j)_{j=0,...,3})$ of degree $m$ whose attractor $A_\mathcal{S}$ is a Cantor-type subset of $\R^2$ which is not an attractor of any GIFS of degree $m-1$. Let $\rho_j(t):=\frac{1}{2}t$ for $j=0,1,2$ and $\rho_3(t):=t$, and let $\mathcal{Z_S}:=(\R^2,(\phi_j)_{j=0,...,3},(\rho_j)_{j=0,..,3})$ and $u_\mathcal{Z}$ be its attractor. Since $(\rho_j)_{j=0,...,3}$ is proper and the attractor of a one-element GIFS $(\R^2,\phi_3)$ is a singleton, by Remark \ref{f4} we have that $u_\mathcal{Z}$ is not crisp and by Theorem \ref{zero cut GIFZS versus GIFS}, $[u_\mathcal{Z}]^0=A_\mathcal{S}$.\\
Now assume that $u_\mathcal{Z}$ is the attractor of some Matkowski contractive GIFZS $\mathcal{Z_{S'}}=(\R^2,(\psi_j),(\eta_j))$  of degree $m-1$ with the system $(\eta_j)$ satisfying $r_j^+=0$ for all $j$. Then by Theorem \ref{zero cut GIFZS versus GIFS}(1), we get that $A_\mathcal{S}=[u_\mathcal{Z}]^0=A_\mathcal{S'}$, which is a contradiction.
\end{proof}

\begin{remark}\emph{
The proof of part (1) of the above suggests a canonical way to built a GIFS $\mathcal{S}=(X, (f_j)_{j=0...n-1})$ with the same attractor of a given Matkowski contractive IFS $\mathcal{R}=(X, (\phi_j)_{j=0...n-1})$. We simply define
$$\phi_j(x_0,...,x_{n-1}):= f_j(x_0), \; j=0..n-1.$$
It is easy to see that $\mathcal{S}(A_\mathcal{R}, ..., A_\mathcal{R})=A_\mathcal{R}$, and thus from the uniqueness of $A_\mathcal{S}$ we obtain $A_\mathcal{R}=A_\mathcal{S}$.\\
The same is true for the fuzzyfied ones. Given an IFZS  $\mathcal{Z_R}=(X, (f_j)_{j=0...n-1}, (\rho_j)_{j=0...n-1})$ we define an $\mathcal{Z_S}=(X, (\phi_j)_{j=0...n-1}, (\rho_{j})_{j=0...n-1}),$ where
$$\phi_{j}(\btimes_{i}  u_{i}):=f_j(u_0)$$
We notice that
$$\phi_{j}(\btimes_{i}  u_{i})(z)=
\left\{
                                  \begin{array}{ll}
                                   \sup_{f_{j}(x_{0})=z} \bigwedge_{i=0}^{m-1 }u_{i}(x_{i}), &\text{ if } f_{j}^{-1}(z)\times X^{n-1} \neq \varnothing\\
                                    0,  & \text{otherwise}
                                  \end{array}
\right.
=$$ $$=
\left\{
                                  \begin{array}{ll}
                                   \sup_{f_{j}(x_{0})=z} u_{0}(x_{0}), &\text{ if } f_{j}^{-1}(z) \neq \varnothing\\
                                    0,  & \text{otherwise}
                                  \end{array}
\right.
= f_j(u_0)(z)$$
Again it is immediate to see that if  $u^*$ is the fuzzy attractor of $\mathcal{Z_R}$ then
$$\mathcal{Z_S}(u^*, ..., u^*)= \bigvee_{j=0...n-1} \rho_{j}(\phi_{j}(\btimes_{i}  u^*))=\bigvee_{j=0...n-1} \rho_{j}(f_{j}(u^*))=\mathcal{Z_R}(u^*)=u^*.$$
So $u^*$ is a fuzzy fractal also for $\mathcal{Z_S}$.}
\end{remark}

As an application of the GIFZS Collage Theorem~\ref{GIFS Collage Theorem} we get a density result, which is a slight extension of \cite{MR1187310}, Theorem 3.1.

\begin{theorem} \label{Density of gen fuzzy attract} Assume that $(X,d)$ is a complete metric space which has a Lipschitz contraction structure, i.e., which satisfies
$$\forall K\in\mathcal{K}(X)\;\forall L>0\;\forall x \in X \;\forall \varepsilon >0\; \exists \;f:X \to X,\; \mathrm{Lip}(f) <L,\; s.t. \;f(K) \subset B(x, \varepsilon),$$ (for example if $X=\R^n$). Then $\overline{\mathcal{A}_i}= \mathcal{F}_{X}^*$, where the closure is taken in the metric  $d_\infty$. In particular $\overline{\mathcal{A}_g^m}= \mathcal{F}_{X}^*$ for all $m \geq 2$.
\end{theorem}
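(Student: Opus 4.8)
The plan is to follow the method of Cabrelli et al.\ and prove directly that $\overline{\mathcal{A}_i}=\mathcal{F}_X^*$; the claim for $\mathcal{A}_g^m$ then follows immediately, since $\mathcal{A}_i=\mathcal{A}_g^1\subseteq\mathcal{A}_g^m$ by Theorem~\ref{the set of attrac of GIFZS is bigger than IFZS}(1), whence $\mathcal{F}_X^*=\overline{\mathcal{A}_i}\subseteq\overline{\mathcal{A}_g^m}\subseteq\mathcal{F}_X^*$. So fix $u\in\mathcal{F}_X^*$ and $\varepsilon_0>0$; the goal is to produce a Lipschitz contractive IFZS whose fuzzy attractor $u^*$ satisfies $d_\infty(u,u^*)<\varepsilon_0$. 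The main tool is the IFZS Collage Theorem (together with completeness of $(\mathcal{F}_X^*,d_\infty)$, Theorem~\ref{Fuzzy Space is Complete}): if the IFZS is built with contraction constant $\lambda\le\tfrac12$, it suffices to make $d_\infty\bigl(u,\mathcal{Z_R}(u)\bigr)$ arbitrarily small.

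The first ingredient is a staircase-approximation lemma for the cuts of $u$: the map $\alpha\mapsto[u]^\alpha$ from $[0,1]$ into $(\mathcal{K}^*(X),h)$ is left-continuous on $(0,1]$ (a decreasing net of nonempty compact sets converges in $h$ to its intersection, which by upper semicontinuity of $u$ equals $[u]^\gamma$) and is continuous at $0$ by Corollary~\ref{f2}. Compactness of $[0,1]$ then yields, for every $\delta>0$, a partition $0=\alpha_0<\alpha_1<\dots<\alpha_N=1$ with $h\bigl([u]^\alpha,[u]^{\alpha_j}\bigr)<\delta$ whenever $\alpha\in(\alpha_{j-1},\alpha_j]$, and $h\bigl([u]^0,[u]^{\alpha_1}\bigr)<\delta$ (one covers $[0,1]$ by finitely many intervals on which $[u]^\bullet$ varies by less than $\delta$ up to the right endpoint, and adds a node $\alpha_1$ near $0$ to control the zero level). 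Equivalently, the staircase fuzzy set $\bigvee_{j=1}^N\alpha_j\,\bchi_{[u]^{\alpha_j}}$ is $\delta$-close to $u$ in $d_\infty$.

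Now fix such a $\delta$ and a radius $\varepsilon>0$ (both to be made small at the end), write $K_j:=[u]^{\alpha_j}$, and build the IFZS. For each $j$ pick a finite $\varepsilon$-net $x^j_1,\dots,x^j_{n_j}\in K_j$, so that the balls $B(x^j_i,\varepsilon)$ cover $K_j$; using the Lipschitz contraction structure applied to the compact set $[u]^0$, choose $\phi^j_i\colon X\to X$ with $\mathrm{Lip}(\phi^j_i)<\tfrac12$ and $\phi^j_i\bigl([u]^0\bigr)\subseteq B(x^j_i,\varepsilon)$, and set $\rho^j_i:=\alpha_j\,\bchi_{\{1\}}$ (i.e.\ $\rho^j_i(t)=0$ for $t<1$ and $\rho^j_i(t)=\alpha_j$ for $t=1$). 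This $(\rho^j_i)$ is an admissible system: each $\rho^j_i$ is nondecreasing, right continuous, vanishes at $0$, and $\rho^N_i(1)=\alpha_N=1$. Let $\mathcal{Z_R}$ be the resulting IFZS; it is Lipschitz contractive with $\lambda<\tfrac12$. From Lemma~\ref{general properties of fuzzy composition}(b),(c) and Proposition~\ref{alpha cut compose with ndrc rho} one computes
\[
\bigl[\rho^j_i(\phi^j_i(u))\bigr]^\alpha=\phi^j_i\bigl([u]^1\bigr)\ \text{ for }0\le\alpha\le\alpha_j,\qquad \bigl[\rho^j_i(\phi^j_i(u))\bigr]^\alpha=\emptyset\ \text{ for }\alpha_j<\alpha\le1,
\]
and therefore $[\mathcal{Z_R}(u)]^\alpha=\bigcup_{k\ge j}\bigcup_i\phi^k_i\bigl([u]^1\bigr)$ for $\alpha\in(\alpha_{j-1},\alpha_j]$ (with $\alpha_0:=0$), while $[\mathcal{Z_R}(u)]^0=\bigcup_{k,i}\phi^k_i\bigl([u]^1\bigr)$.

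To finish, I would estimate $d_\infty\bigl(u,\mathcal{Z_R}(u)\bigr)=\sup_\alpha h\bigl([u]^\alpha,[\mathcal{Z_R}(u)]^\alpha\bigr)$. For $\alpha\in(\alpha_{j-1},\alpha_j]$, every set $\phi^k_i([u]^1)$ with $k\ge j$ lies in $B(x^k_i,\varepsilon)$ with $x^k_i\in K_k\subseteq K_j$, so $[\mathcal{Z_R}(u)]^\alpha\subseteq (K_j)_\varepsilon$; conversely $\{x^j_i\}_i$ is an $\varepsilon$-net of $K_j$ and each $\phi^j_i([u]^1)$ is a nonempty subset of $B(x^j_i,\varepsilon)$ contained in $[\mathcal{Z_R}(u)]^\alpha$, so every point of $K_j$ is within $2\varepsilon$ of $[\mathcal{Z_R}(u)]^\alpha$; hence $h\bigl(K_j,[\mathcal{Z_R}(u)]^\alpha\bigr)\le2\varepsilon$, and with the staircase lemma $h\bigl([u]^\alpha,[\mathcal{Z_R}(u)]^\alpha\bigr)\le h\bigl([u]^\alpha,K_j\bigr)+2\varepsilon<\delta+2\varepsilon$; the level $\alpha=0$ is identical with $j=1$. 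Thus $d_\infty\bigl(u,\mathcal{Z_R}(u)\bigr)\le\delta+2\varepsilon$, and the IFZS Collage Theorem gives $d_\infty(u,u^*)<\tfrac1{1-\lambda}(\delta+2\varepsilon)\le2(\delta+2\varepsilon)$, which is $<\varepsilon_0$ once $\delta,\varepsilon$ are small enough; hence $u\in\overline{\mathcal{A}_i}$, and as $u$ was arbitrary, $\overline{\mathcal{A}_i}=\mathcal{F}_X^*$. I expect the staircase lemma to be the only genuinely delicate point — one must pin down enough regularity of $\alpha\mapsto[u]^\alpha$ (left-continuity on $(0,1]$, continuity at $0$) and use compactness of $[0,1]$ to extract a single partition that works uniformly in $\alpha$. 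The rest is bookkeeping, once one observes that taking the grey level maps to be jumps at $1$ forces all cuts of $\mathcal{Z_R}(u)$ to be assembled from the single fixed compact set $[u]^1$, which the contraction structure then lets us scatter wherever needed.
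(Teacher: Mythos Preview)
Your argument is correct and complete (modulo the staircase lemma, which is true and follows from the fact that $\alpha\mapsto[u]^\alpha$ is a regulated map from $[0,1]$ into the compact hyperspace $\mathcal{K}^*([u]^0)$, hence uniformly approximable by step functions). However, your route differs from the paper's in an instructive way. The paper skips the staircase approximation entirely: it takes a single $\varepsilon/4$-net $\{x_0,\dots,x_{N-1}\}$ of $[u]^0$, defines $\alpha_j:=\sup_{\overline B(x_j,\varepsilon/4)}u$, and sets $\rho_j:=\alpha_j\,\bchi_{[\alpha_j,1]}$ (a jump at $\alpha_j$, not at $1$). With this choice $[\rho_j(u)]^\alpha=[u]^{\alpha_j}$ for $0<\alpha\le\alpha_j$, and for any $\alpha\in(0,1]$ every point $x\in[u]^\alpha$ lies in some ball $B(x_j,\varepsilon/4)$ with $\alpha_j\ge u(x)\ge\alpha$, which immediately gives $h([u]^\alpha,[\mathcal{Z_R}(u)]^\alpha)\le\varepsilon/2$ without any preliminary partition of the $\alpha$-axis.

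So the paper lets the values $\alpha_j$ emerge from $u$ itself (as local suprema), whereas you impose them via the staircase lemma and then take nets level by level. Your approach is more modular---it cleanly separates the vertical (grey-level) approximation from the horizontal (spatial) one---and your choice $\rho^j_i=\alpha_j\bchi_{\{1\}}$ makes all cuts of $\mathcal{Z_R}(u)$ depend only on the single set $[u]^1$, which is conceptually pleasant. The cost is the extra lemma you flag as ``the only genuinely delicate point''; the paper's construction simply sidesteps it.
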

\begin{proof}
We prove for IFZS first, and extend to  GIFZS after. The proof of this part is due to \cite{MR1187310} Theorem 3.1, slightly generalized for spaces with Lipschitz contraction structure. We present it here for completeness, but the reasoning is essentially the same.

Take $u \in \mathcal{F}_{X}^*$. Then $[u]^0 \subseteq X$ is a compact set. Hence for each $\varepsilon>0$ there exists $N \in \mathbb{N}$ and $x_0,...,x_{N-1}$ such that $\displaystyle \bigcup_{j=0...N-1}B(x_j, \varepsilon/4) \supset   [u]^0$. Clearly, we can assume that $u(x_j)>0$ for $j=0,...,N-1$.
Using the contraction structure we can find $(\phi_j)_{j=0...N-1}$ such that
$$\phi_j([u]^0) \subset B(x_j, \varepsilon/4), j=0...N-1,$$
and $\lambda_j:= \mathrm{Lip}(\phi_j) <\frac{1}{2}$, if it not happens we can consider a power of the initial contraction.

Chose now $\displaystyle \alpha_j:= \sup_{x \in \overline{B}(x_j, \varepsilon/4)} u(x)$. Then each $\alpha_j>0$ and $\displaystyle [u]^{\alpha} \subseteq \bigcup_{\alpha_j \leq \alpha} [u]^{\alpha_j}$ for any $\alpha\in[0,1]$. Indeed, it follows from $u(x)\geq \alpha \geq \alpha_j$.

Now define $\rho_j (x):= \alpha_j \bchi_{[\alpha_j , 1]}(x)$, that is obviously ndrc. In particular  $\rho_j (0)=0$,  $\rho_j (1)=\alpha_j$ and $\beta_j(\alpha)=\alpha_j$ for $0<\alpha\leq \alpha_j$. Consider the IFZS $\mathcal{Z_R}:=(X, (\phi_j)_{j=0...n-1}, (\rho_j)_{j=0...n-1})$.

By Proposition \ref{alpha cut compose with ndrc rho},
\[
[\rho_j(u)]^{\alpha}
=
  \begin{cases}
    \varnothing, & \alpha > \alpha_j \\
    [u]^{\alpha_{j}}, &  0 < \alpha \leq \alpha_j.
  \end{cases}
\]

By Lemma~\ref{general properties of fuzzy composition}
c), we get $\displaystyle \left[\mathcal{Z_R}(u)\right]^{\alpha}= \bigcup_{j=0...n-1}\phi_{j}([\rho_{j}(u)]^{\alpha})=  \bigcup_{j: \alpha \leq \alpha_j} \phi_{j}([u]^{\alpha_{j}})$ for $\alpha\in(0,1]$.\\
So fix $\alpha\in(0,1]$. Then by the above we have
\begin{equation}\label{f5}
h([u]^\alpha,[\mathcal{Z_R}(u)]^\alpha)= h\left([u]^\alpha,\bigcup_{j: \alpha \leq \alpha_j} \phi_{j}([u]^{\alpha_{j}})\right)
\end{equation}
Take any $x\in [u]^\alpha$. Then for some $j$, $x\in B(x_j,\frac{\varepsilon}{4})$. By definition, $\alpha_j\geq u(x)\geq \alpha$. Also, $\emptyset\neq\phi_j([u]^{\alpha_j})\subset \phi_j([u]^0)\subset B(x_j,\frac{\varepsilon}{4})$. This gives us
$$
[u]^\alpha\subset \left(\bigcup_{j:\alpha\leq\alpha_j}\phi_j([u]^{\alpha_j})\right)_{\frac{\varepsilon}{2}}
$$
Conversely, let $x\in \phi_j([u]^{\alpha_j})$ for some $j$ with $\alpha\leq\alpha_j$. Then $x\in B(x_j,\frac{\varepsilon}{4})$, and by definition of $\alpha_j$ and a compactness of the set $\overline{B}(x_j,\frac{\varepsilon}{4})\cap [u]^0$, there is $y\in \overline{B}(x_j,\frac{\varepsilon}{4})$ such that $u(y)=\alpha_j$ so $y\in[u]^\alpha$. In particular,
$$
\bigcup_{j:\alpha\leq\alpha_j}\phi_j([u]^{\alpha_j})\subset \left([u]^\alpha\right)_{\frac{\varepsilon}{2}}
$$
Hence by (\ref{f5}) and Corollary \ref{f2}, we have
$$
d_\infty(u, \mathcal{Z_R}(u))= \sup_{\alpha\in(0,1]} h([u]^{\alpha}, [\mathcal{Z_R}(u)]^{\alpha})\leq\frac{\varepsilon}{2}
$$

By the IFZS Collage Theorem we get $d_\infty(u, u^*) <\frac{1}{1-1/2} \varepsilon/2 = \varepsilon$ where $u^*$ is the attractor of $\mathcal{Z_R}$. Hence we get $\overline{\mathcal{A}_i}= \mathcal{F}_{X}^*$. By Theorem~\ref{the set of attrac of GIFZS is bigger than IFZS} we have $\displaystyle\mathcal{A}_i \subset\mathcal{A}_g^m$ for every $m\in\N$, so $\overline{\mathcal{A}_i}= \mathcal{F}_{X}^*$, implies that $\overline{\mathcal{A}_g^m}= \mathcal{F}_{X}^*$ for all $m \geq 2$.
\end{proof}

\addcontentsline{toc}{section}{References}

\begin{thebibliography}{10}

\bibitem[AB]{MR2378491}
Charalambos~D. Aliprantis and Kim~C. Border.
\newblock {\em Infinite dimensional analysis}.
\newblock Springer, Berlin, third edition, 2006.
\newblock A hitchhiker's guide.

\bibitem[B]{MR977274}
Michael Barnsley.
\newblock {\em Fractals everywhere}.
\newblock Academic Press, Inc., Boston, MA, 1988.

\bibitem[CM]{MR1187310}
Carlos Cabrelli and Ursula Molter.
\newblock Density of fuzzy attractors: a step towards the solution of the
  inverse problem for fractals and other sets.
\newblock In {\em Probabilistic and stochastic methods in analysis, with
  applications ({I}l {C}iocco, 1991)}, volume 372 of {\em NATO Adv. Sci. Inst.
  Ser. C Math. Phys. Sci.}, pages 163--173. Kluwer Acad. Publ., Dordrecht,
  1992.

\bibitem[CFMV]{MR1192494}
Carlos~A. Cabrelli, Bruno Forte, Ursula~M. Molter, and Edward~R. Vrscay.
\newblock Iterated fuzzy set systems: a new approach to the inverse problem for
  fractals and other sets.
\newblock {\em J. Math. Anal. Appl.}, 171(1):79--100, 1992.

\bibitem[DK]{MR1337027}
Phil Diamond and Peter Kloeden.
\newblock {\em Metric spaces of fuzzy sets}.
\newblock World Scientific Publishing Co., Inc., River Edge, NJ, 1994.
\newblock Theory and applications.

\bibitem[He]{MR1729417}
Qing He, Hong-Xing Li, C.~L.~P. Chen, and E.~S. Lee.
\newblock Extension principles and fuzzy set categories.
\newblock {\em Comput. Math. Appl.}, 39(1-2):45--53, 2000.

\bibitem[Hut]{MR0625600} John Hutchinson, \emph{Fractals and self-similarity}. Indiana
Univ. Math. J. 30 (1981), no 5, 713-747.

\bibitem[JJ]{MR2338580} Jacek Jachymski, Izabela J\'o\'zwik, \emph{Nonlinear contractive
conditions: a comparison and related problems}, Banach Center Publ., 77,
Polish Acad. Sci., 77 (2007), 123--146.

\bibitem[Ma]{MR0412650}
Janusz Matkowski.
\newblock Integrable solutions of functional equations.
\newblock {\em Dissertationes Math.}, 127:68pp, 1975.

\bibitem[Mi]{MR3180942}
Radu Miculescu.
\newblock Generalized iterated function systems with place dependent
  probabilities.
\newblock {\em Acta Appl. Math.}, 130:135--150, 2014.

\bibitem[M1]{MR2449187} Alexandru Mihail, \emph{Recurrent iterated function systems}. Rev. Roumaine Math. Pures Appl., 53 (2008), 1, 43-53.

\bibitem[M2]{MR2568892}
Alexandru Mihail.
\newblock The {H}utchinson measure for generalized iterated function systems.
\newblock {\em Rev. Roumaine Math. Pures Appl.}, 54(4):297--316, 2009.

\bibitem[MM]{MR2595825} Radu Miculescu, Alexandru Mihail, \emph{Generalized IFSs on
Noncompact Spaces}, Fixed Point Theory Appl. Volume 2010, Article ID 584215,
11 pages doi:10.1155/2010/584215.

\bibitem[MM1]{MR2415407}
Alexandru Mihail and Radu Miculescu.
\newblock Applications of fixed point theorems in the theory of generalized
  {IFS}.
\newblock {\em Fixed Point Theory Appl.}, pages Art. ID 312876, 11, 2008.


\bibitem[S]{MR3263451}
Filip Strobin.
\newblock Attractors of generalized {IFS}s that are not attractors of {IFS}s.
\newblock {\em J. Math. Anal. Appl.}, 422(1):99--108, 2015.

\bibitem[SS]{MR3011940}
Swaczyna,~Jaros{\l}aw Strobin, Filip.
\newblock On a certain generalisation of the iteration function system.
\newblock {\em Bull. Aust. Math. Soc.}, 87:37--54, 2013.

\bibitem[Z]{MR0219427}
L.~A. Zadeh.
\newblock Fuzzy sets.
\newblock {\em Information and Control}, 8:338--353, 1965.

\end{thebibliography}

\end{document}